\newtheorem{theorem}{Theorem}[section]
\newtheorem{lemma}[theorem]{Lemma}
\newtheorem{corollary}[theorem]{Corollary}
\theoremstyle{definition}
\theoremstyle{remark}
\newtheorem{remark}[theorem]{Remark}
\numberwithin{equation}{section}
\begin{document}
\nocite*
\title{$L^{p}$ norms of the lattice point discrepancy}


 \author[L. Colzani]{Leonardo Colzani}
\address{Dipartimento di Matematica e Applicazioni,
   Universit\`a degli Studi di Milano-Bicocca,
   Via R. Cozzi 55, 20125 Milano,
   Italy}
\curraddr{}
\email{leonardo.colzani@unimib.it}
\thanks{}

\author[B. Gariboldi]{Bianca Gariboldi}
\address{Dipartimento di Ingegneria Gestionale, dell'Informazione e della Produzione,
  Universit\`a degli Studi di Bergamo,
  Viale Marconi 5, 24044 Dalmine (BG),
  Italy}
\curraddr{}
\email{biancamaria.gariboldi@unibg.it}
\thanks{}

\author[G. Gigante]{Giacomo Gigante}
\address{Dipartimento di Ingegneria Gestionale, dell'Informazione e della Produzione,
  Universit\`a degli Studi di Bergamo,
  Viale Marconi 5, 24044 Dalmine (BG),
  Italy}
\curraddr{}
\email{giacomo.gigante@unibg.it}
\thanks{}
\subjclass[2010]{11H06, 42B05, 52C07 (primary)}

\date{}

\dedicatory{}

\begin{abstract}
We estimate the $L^{p}$\ norms of the discrepancy
between the volume and the number of integer points in $r\Omega-x$, a dilated
by a factor $r$\ and translated by a vector $x$\ of a convex body $\Omega$\ in
$\mathbb{R}^{d}$ with smooth boundary  with strictly positive curvature,
\[
\left\{    
{\displaystyle\int_{\mathbb R}}{\displaystyle\int_{\mathbb{T}^{d}}}\left\vert \sum_{k\in\mathbb{Z}^{d}}\chi
_{r\Omega-x}(k)-r^{d}\left\vert \Omega\right\vert \right\vert ^{p}dxd\mu(r-R)
\right\}  ^{1/p},
\]
where $\mu$ is a Borel measure compactly supported on the positive real axis and $R\to+\infty$.
\end{abstract}

\maketitle

\section{Introduction}

\label{intro}

The discrepancy between the volume and the number of integer points in
$r\Omega-x$, a dilated by a factor $r$ and translated by a vector $x$ of
bounded domain $\Omega$ in $\mathbb{R}^{d}$, is
\[
\mathcal{D}\left(\Omega,r,x\right)  =%
{\displaystyle\sum_{k\in\mathbb{Z}^{d}}}
\chi_{r\Omega-x}(k)-r^{d}\left\vert \Omega\right\vert .
\]

Here $\chi_{r\Omega-x}(y)$ denotes the characteristic function of $r\Omega-x$
and $\left\vert \Omega\right\vert $ the measure of $\Omega$. 
A classical
problem is to estimate the size of $\mathcal{D}\left(  \Omega,r,x\right) $, as
$r\rightarrow+\infty$. For a survey see e.g. \cite{IKKN}, \cite{Kratzel} or \cite{Travaglini}.

By a classical result of D. G. Kendall,  the $L^{2}$ norm with respect to the translation variable $x$ of the discrepancy $\mathcal{D}\left(  \Omega,
r,x\right)  $ of an oval $\Omega$   is of the order of $r^{\left(  d-1\right)  /2}$. See \cite{Kendall} and what follows. For this reason we shall call
$r^{-\left(  d-1\right)  /2}\mathcal{D}\left(  \Omega,r,x\right)  $ the
normalized discrepancy. Our main result below is an estimate of
the fractal dimension of the set of values of the dilation variable $r$ where this normalized discrepancy may be large.

Throughout the paper, we shall assume that $\Omega$  is a convex body in $\mathbb R^d$ with smooth boundary  with strictly positive Gaussian curvature
such that the origin belongs to the interior of $\Omega$. 
We will also assume that $\mu$ is a positive Borel measure  with compact support contained in 
$\{0\leq r <+\infty\}$ and with Fourier transform $|\widehat\mu(\xi)|\le C(1+|\xi|)^{-\beta}$ for some $\beta\ge0$.
We recall that the Fourier transform of $\mu  $ is defined by
\begin{align*}
&  \widehat{\mu}\left(  \xi\right)  =%
{\displaystyle\int_{\mathbb{R}}}
\exp\left(  -2\pi i\xi r\right)  d\mu\left(  r\right),
\end{align*}
and that the Fourier dimension of a measure is the supremum of all $\delta$
such that there exists $C$ such that $\left\vert \widehat{\mu}\left(
\xi\right)  \right\vert \leq C\left\vert \xi\right\vert ^{-\delta/2}$.
See \cite[Section 4.4]{falconer} and \cite[Section 12.17]{PM}.

Also, for any $p\ge1$ and for any $R\ge 2$ we define
\[
I(d,\Omega,\mu,p,R)=\left\{  \int_{\mathbb{R}}  \int_{\mathbb{T}^{d}}| r^{-(d-1)/2}\mathcal D(\Omega,r,x)  | ^{p}dxd\mu(r-R)\right\}^{1/p},
\]
where the translated measure $d\mu\left(
r-R\right)  $ is defined by
\[%
{\displaystyle\int_{\mathbb{R}}}
f\left(  r\right)  d\mu\left(  r-R\right)  =%
{\displaystyle\int_{\mathbb{R}}}
f\left(  r+R\right)  d\mu\left(  r\right)  .
\]

 \begin{theorem} \label{thm_d=2} Let  $d=2$.
 
\noindent If $0\leq\beta<2/5$ then there exists a constant $C$ such that for every $R\ge 2$,
 \begin{align*}
   I(2,\Omega,\mu,p,R)
    \leq
 \begin{cases}
 C & \text{if }p<4+2\beta,\\
  C\log^{1/p}\left(  R\right)  & \text{if }p=4+2\beta.
 \end{cases}
 \end{align*}
 If $\beta=2/5$ then there exists a constant $C$ such that for every $R\ge 2$,
 \begin{align*}
  I(2,\Omega,\mu,p,R)
   \leq
 \begin{cases}
 C & \text{if }p<4+2\beta,\\
  C\log^{1/p+1/12}\left(  R\right)  & \text{if }p=4+2\beta.
 \end{cases}
 \end{align*}
 If $2/5<\beta<1/2$ then there exists a constant $C$ such that for every $R\ge 2$,
 \begin{align*}
 I(2,\Omega,\mu,p,R)
   \leq
 \begin{cases}
 C & \text{if }p<4+10\beta/(3+5\beta),\\
  C\log^{1/p}\left(  R\right)  & \text{if }p=4+10\beta/(3+5\beta).
 \end{cases}
 \end{align*}
  If $\beta=1/2$ then there exists a constant $C$ such that for every $R\ge 2$,
 \begin{align*}
  I(2,\Omega,\mu,p,R)
  \leq
 \begin{cases}
 C & \text{if }p<4+10/11,\\
  C\log^{1/p+1/9}\left(  R\right)  & \text{if }p=4+10/11.
 \end{cases}
 \end{align*}
  If $\beta>1/2$ then there exists a constant $C$ such that for every $R\ge 2$,
 \begin{align*}
  I(2,\Omega,\mu,p,R)
    \leq
 \begin{cases}
 C & \text{if }p<4+10/11,\\
  C\log^{1/p}\left(  R\right)  & \text{if }p=4+10/11.
 \end{cases}
 \end{align*}
 \end{theorem}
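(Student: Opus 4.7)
The plan is to start, as is standard, with Poisson summation and stationary phase. Writing
\[
\mathcal{D}(\Omega, r, x) = \sum_{k \in \mathbb{Z}^2 \setminus \{0\}} r^2 \widehat{\chi_\Omega}(rk)\, e^{2\pi i k \cdot x},
\]
and using that strict positivity of the Gaussian curvature of $\partial \Omega$ yields in $d=2$
\[
\widehat{\chi_\Omega}(\xi) = |\xi|^{-3/2}\bigl(A_+(\xi/|\xi|) e^{2\pi i \rho(\xi)} + A_-(\xi/|\xi|) e^{-2\pi i \rho(-\xi)}\bigr) + O(|\xi|^{-5/2}),
\]
where $\rho$ is the positively $1$-homogeneous support function of $\Omega$ and $A_\pm$ are smooth bounded symbols, one reduces the theorem (after discarding the $O(|\xi|^{-5/2})$ remainder, whose total $L^p$ contribution is $O(1)$) to estimating the $L^p(\mathbb{T}^2 \times d\mu(r-R))$ norm of the main oscillatory series
\[
F(x,r) = \sum_{k \neq 0} |k|^{-3/2} A_+(k/|k|)\, e^{2\pi i(r\rho(k) + k\cdot x)} + \text{(mirror term)}.
\]

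The critical exponent is $p=4$. Expanding $|F|^4$ and applying Parseval in $x$ leads to a quadruple sum under the constraint $k_1+k_2=k'_1+k'_2$, and integrating in $r$ against $d\mu(r-R)$ replaces the $r$-oscillation by a factor $\widehat\mu(\Phi)$ with $\Phi=\rho(k_1)+\rho(k_2)-\rho(k'_1)-\rho(k'_2)$. The hypothesis $|\widehat\mu(\xi)|\le C(1+|\xi|)^{-\beta}$ then reduces the task to bounding
\[
\sum_{k_1+k_2=k'_1+k'_2} |k_1 k_2 k'_1 k'_2|^{-3/2} (1+|\Phi|)^{-\beta}.
\]
I would dyadically decompose this sum by the common scale $|k_j|\sim N$ and by the phase size $|\Phi|\sim T$. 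The compact support of $\mu$ restricts the effective scales to $N\lesssim R$, so the critical scale $N\sim R$ is what determines the endpoint logarithmic behaviour.

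Within each dyadic block two quadruple counts compete. The \emph{volume} bound $O(N^6)$ ignores the phase constraint entirely and dominates for small $\beta$; the \emph{curvature-refined} bound on the number of quadruples of scale $N$ with $|\Phi|\sim T$ is obtained from Bombieri--Pila-type estimates on lattice points on dilates of the smooth strictly convex dual curve $\partial\Omega^*$, and dominates once the decay $T^{-\beta}$ is strong enough. Interpolating the two counts and integrating against $T^{-\beta}\,dT$ produces exactly the three regimes of the theorem: for $\beta\le 2/5$ the volume bound wins and gives $p<4+2\beta$; for $\beta\ge 1/2$ the curvature bound wins and the threshold saturates at $4+10/11$; in between, the two bounds trade off and give the fractional-linear threshold $4+10\beta/(3+5\beta)$. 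The extra factors $\log^{1/12}$ and $\log^{1/9}$ at the transitions $\beta=2/5$ and $\beta=1/2$ reflect the simultaneous saturation of both estimates at those corners. Values of $p$ other than the critical one follow by H\"older interpolation with the elementary $L^2$ bound, which is $O(1)$ thanks to Kendall's theorem.

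The hard part is the sharp lattice-point count behind the curvature-refined bound with the specific numerology producing $10/11$ and the critical $\log^{1/9}$, $\log^{1/12}$ corrections. This demands an essentially optimal Bombieri--Pila-type estimate together with uniform joint control of the linear constraint $k_1+k_2=k'_1+k'_2$ and the phase constraint $|\Phi|\sim T$; strict positivity of the Gaussian curvature is used throughout to keep the geometry of the level sets of $\rho$, and hence the incidence bounds, uniform in $N$ and $T$.
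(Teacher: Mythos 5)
Your opening is aligned with the paper: Poisson summation, the stationary--phase expansion of $\widehat{\chi}_\Omega$ with phase $g(\pm\xi)$, mollification to discard the remainder, Parseval in $x$, and the crucial observation that integrating in $r$ against $d\mu(r-R)$ converts the oscillation into the factor $\widehat\mu(\Phi)$, whose decay $(1+|\Phi|)^{-\beta}$ is then exploited.  Up to the reduction to the quadruple sum your plan and the paper's coincide.

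There is a genuine gap after that, and it concerns how you get from $p=4$ to the actual critical exponents, which are all strictly larger than $4$ once $\beta>0$.  Expanding $|F|^4$ and applying Parseval only controls the $L^4$ norm.  Your concluding sentence --- ``Values of $p$ other than the critical one follow by Hölder interpolation with the elementary $L^2$ bound'' --- does not address this: Hölder between $L^2$ and $L^4$ only reaches the interval $2\le p\le 4$, whereas the critical thresholds are $4+2\beta$, $4+10\beta/(3+5\beta)$, and $4+10/11$, all in $(4,6)$.  Nothing in your plan explains how an $L^4$ computation, however sharp, upgrades to these exponents.  What the paper does instead is introduce an analytic family $\Phi(\delta,z,r,x)$ in which the geometric exponent $(d+1)/2=3/2$ is promoted to a complex parameter $z$, prove an $L^4$ bound for $\mathrm{Re}(z)\ge z_4$ (Lemma~\ref{lm_p=4}), prove a separate $L^6$ bound for $\mathrm{Re}(z)\ge z_6$ (Lemma~\ref{lm_p=6}, a genuine six-fold sum), and then run Stein's complex interpolation between those two lines to reach $\mathrm{Re}(z)=3/2$ (Lemma~\ref{lm_Interpolation_d=2}).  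It is precisely because $z_4<3/2<z_6$ when $\beta>0$ that the interpolated exponent lands strictly between $4$ and $6$, and it is the transition between $z_4(\beta)$ and $z_6(\beta)$ hitting their kinks that produces the thresholds $\beta=2/5$ and $\beta=1/2$ and the anomalous log powers $1/12$ and $1/9$.  Your proposal omits both the $L^6$ analysis and the analytic-family interpolation, and so cannot produce the stated numerology from the $L^4$ sum alone.

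Secondly, the Bombieri--Pila route to controlling the quadruple sum is not what the paper does and is of doubtful utility here.  The paper replaces the lattice sum by a continuous integral (Lemma~\ref{lm_6_integral_ellipsoid}), justified by the slow variation of $|m|^{-\mathrm{Re}(z)}$ and of $(1+|g(\cdot)+\dots|)^{-\beta}$ on unit cubes, and then estimates the integral $\int |x|^{-\alpha}|k-x|^{-\alpha}(1+|g(x)+g(k-x)-Y|)^{-\beta}\,dx$ via polar coordinates and the monotonicity/convexity of $\rho\mapsto g(\rho\vartheta)+g(k-\rho\vartheta)$ (Lemmas~\ref{Support}--\ref{integral}).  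The strictly positive curvature enters through the second-derivative bound on the support function, not through an incidence or lattice-counting bound.  A Bombieri--Pila count would in any case be attacking a different problem (lattice points exactly on dilates of a curve) from the one at hand (weighted counting of quadruples with a linear constraint and a phase constraint of width $T$), and it is not at all clear it would reproduce the precise exponents or the endpoint log factors.  In short, the skeleton is right up to the quadruple sum, but the two key mechanisms that actually prove the theorem --- the sum-to-integral reduction with curvature-driven polar estimates, and the complex interpolation of the analytic family between $L^4$ and $L^6$ --- are missing from your proposal.
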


\begin{theorem}
\label{thm_d>2}
Let $d\ge3$. 

\noindent If $0\leq\beta<1$ then there exists a constant $C$ such that for every $R\ge 2$,
 \begin{align*}
 I(d,\Omega,\mu,p,R)
   \leq
 \begin{cases}
 C & \text{if }p<2(d-\beta)/(d-\beta-1),\\
  C\log^{1/p}\left(  R\right)  & \text{if }p=2(d-\beta)/(d-\beta-1).
 \end{cases}
 \end{align*}
If $\beta=1$ then there exists a constant $C$ such that for every $R\ge 2$,
 \begin{align*}
  I(d,\Omega,\mu,p,R)
      \leq
 \begin{cases}
 C & \text{if }p<2(d-1)/(d-2),\\
   C\log^{3/4}\left(  R\right)  & \text{if }p=2(d-1)/(d-2)\text{ and } d=3,\\
  C\log^{1/2}\left(  R\right)  & \text{if }p=2(d-1)/(d-2)\text{ and } d>3.
 \end{cases}
 \end{align*}
 If $\beta>1$ then there exists a constant $C$ such that for every  $R\ge 2$,
 \begin{align*}
   I(d,\Omega,\mu,p,R)
  \leq
 \begin{cases}
 C & \text{if }p<2(d-1)/(d-2),\\
   C\log^{1/2}\left(  R\right)  & \text{if }p=2(d-1)/(d-2)\text{ and } d=3,\\
  C\log^{1/p}\left(  R\right)  & \text{if }p=2(d-1)/(d-2)\text{ and } d>3.
 \end{cases}
 \end{align*}
 \end{theorem}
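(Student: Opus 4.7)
The plan is a Poisson summation plus stationary phase expansion, followed by a dyadic decomposition in frequency, $L^p$ estimates on each dyadic piece, and a summation that exhibits the critical exponent together with the endpoint logarithms.

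First I apply Poisson summation to write
$$\mathcal{D}(\Omega, r, x) = \sum_{m \in \mathbb{Z}^d \setminus \{0\}} e^{2\pi i m \cdot x}\, r^d\, \widehat{\chi_\Omega}(rm),$$
and then invoke the classical stationary phase expansion for the Fourier transform of a smooth strictly convex indicator,
$$\widehat{\chi_\Omega}(\xi) = |\xi|^{-(d+1)/2}\bigl(a_+(\xi/|\xi|) e^{-2\pi i \rho(\xi/|\xi|)|\xi|} + a_-(\xi/|\xi|) e^{2\pi i \rho(-\xi/|\xi|)|\xi|}\bigr) + O\bigl(|\xi|^{-(d+3)/2}\bigr),$$
where $\rho$ is the support function of $\Omega$ and $a_\pm$ are smooth amplitudes determined by the Gaussian curvature. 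The $O$-term contributes an absolutely convergent series in any $L^p$ since $\sum_m |m|^{-(d+3)/2}<\infty$ in dimension $d \geq 3$, and can be set aside. After normalising by $r^{-(d-1)/2}$, the main term becomes a series of the form $\sum_{m\neq 0} c_\pm(m)\, |m|^{-(d+1)/2}\, e^{\mp 2\pi i\rho(\pm m/|m|) r|m|}\, e^{2\pi i m\cdot x}$ with bounded amplitudes.

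Next I decompose dyadically: let $\mathcal{D}_j$ be the piece with $|m|\sim 2^j$, so that the normalised discrepancy equals $\sum_{j\ge 0} \mathcal{D}_j$ up to the negligible remainder. I would then estimate $\|\mathcal{D}_j\|_{L^p(\mathbb{T}^d\times d\mu(\,\cdot\,-R))}$ by combining two ingredients. A direct Parseval computation in $x$ gives $\|\mathcal{D}_j\|_2 \leq C\, 2^{-j/2}$ uniformly in $r$, hence uniformly in $R$. For even $p = 2k$ I would expand $\|\mathcal{D}_j\|_p^p$ as a $2k$-fold sum over frequencies in the $j$-shell, with orthogonality in $x$ forcing the constraint $m_1+\cdots+m_k = m'_1+\cdots+m'_k$, and the $r$-integration producing a factor $\widehat{\mu}(\Phi)$ with $\Phi = \sum_i \pm\rho(m_i/|m_i|)|m_i|$, hence bounded by $C(1+|\Phi|)^{-\beta}$; this yields a gain of order $2^{-j\beta}$ on the non-resonant set $\{|\Phi|\gtrsim 1\}$. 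Strict positivity of the Gaussian curvature, equivalently strict convexity of the support function on the sphere, is used to bound the cardinality of the resonant set where $|\Phi|$ is small. Real interpolation between the even-$p$ estimate and the $L^2$ estimate then delivers, for every admissible $p$, a bound of the form $\|\mathcal{D}_j\|_p \leq C\, 2^{j\alpha(p,d,\beta)}$ for all $j$ up to $C\log R$, with a separate treatment of the tail $2^j \gtrsim R$ via elementary size estimates.

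The critical exponent $p = 2(d-\beta)/(d-\beta-1)$ (for $\beta\le 1$) is precisely the value at which $\alpha(p,d,\beta) = 0$: for $p$ strictly below it the geometric series in $j$ converges to a bounded constant, while at the endpoint the $O(\log R)$ dyadic pieces of comparable size produce the $(\log R)^{1/p}$ inflation through the near-orthogonality of the $\mathcal{D}_j$'s. In the regime $\beta\ge 1$ the $\widehat{\mu}$-decay saturates against the intrinsic stationary-phase remainder, so the critical exponent plateaus at $p = 2(d-1)/(d-2)$. The anomalous logarithmic exponents at the transition $\beta=1$, in particular $\log^{3/4}(R)$ in $d=3$, come from an extra logarithm in the count of near-resonant triples: the sphere in $\mathbb{R}^3$ has only a two-parameter family of unit normals, so the relevant counting estimate picks up one extra $\log$ compared with the $d\ge 4$ case. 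The main obstacle is precisely this combinatorial counting of resonant $2k$-tuples, namely bounding the cardinality of $\{(m_1,\ldots,m_{2k}) \in (\mathbb{Z}^d)^{2k} : \sum \pm m_i = 0 \text{ and } |\Phi| \leq T\}$ by the correct polynomial in the dyadic scale and in $T$, through the non-degeneracy of the second fundamental form of $\partial\Omega$; quantifying this estimate with the sharp exponent while carefully tracking the logarithmic losses at the endpoint and at $\beta = 1$ is the most delicate step of the argument.
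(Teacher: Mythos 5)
Your plan is a genuinely different route from the paper. The paper does not decompose the discrepancy dyadically. Instead it introduces a function $\Phi(\delta,z,r,x)$ with Fourier coefficients $|n|^{-z}$ depending holomorphically on a complex parameter $z$, establishes $L^2$ and $L^4$ bounds on two \emph{different} vertical lines $\mathrm{Re}(z)=z_2=d/2$ and $\mathrm{Re}(z)=z_4=(3d-\min\{\beta,1\})/4$, and then runs Stein's complex interpolation for analytic families to hit the physical value $z=(d+1)/2$ that lies strictly between these lines. The endpoint $\log^{1/p}(R)$ is produced automatically: the $L^2$ bound at $\mathrm{Re}(z)=z_2$ is $\log^{1/2}(1/\delta)$ and the $L^4$ bound at $\mathrm{Re}(z)=z_4$ is $\log^{1/4}(1/\delta)$ (up to the extra $\eta$ logs), and the interpolation exponents satisfy $(1-\vartheta)/2+\vartheta/4=1/p$. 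Your dyadic decomposition plus real interpolation is conceptually cleaner but, as written, contains a real gap at exactly that point, discussed below.

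The gap is the summation over dyadic scales at the critical $p$. You assert that the $O(\log R)$ many pieces $\mathcal D_j$ of comparable $L^p$ norm produce only a $(\log R)^{1/p}$ loss ``through the near-orthogonality of the $\mathcal D_j$'s.'' But the pieces are orthogonal only in $L^2$, and for $p>2$ frequency-separated functions are not $\ell^p$-orthogonal. The standard Littlewood--Paley/Minkowski route gives $\bigl\|\sum_j\mathcal D_j\bigr\|_p\lesssim\bigl(\sum_j\|\mathcal D_j\|_p^2\bigr)^{1/2}$, which at the critical exponent yields $\log^{1/2}(R)$, strictly worse than the claimed $\log^{1/p}(R)$ once $p>2$; triangle inequality is worse still, $\log(R)$. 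No elementary inequality turns $O(\log R)$ pieces of size one into a bound $\log^{1/p}(R)$ without further input. The paper avoids this entirely by estimating $\int|\Phi|^p\,dx\,d\mu$ directly (not piece by piece) and then taking $p$-th roots, which is where the $1/p$ exponent is harvested. If you want to rescue the dyadic approach you must prove an $\ell^p$-decoupling-type statement for the $\mathcal D_j$'s, which is a significant additional theorem, not near-orthogonality.

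Two further remarks. First, the resonance counting that you correctly identify as ``the most delicate step'' is indeed the core of the paper (it appears as an explicit convolution estimate, Lemma~\ref{integral}, which you never attempt); asserting its conclusion does not constitute a proof, and the precise form of the estimate---in particular the behaviour at $\beta=1$ and near $\alpha=(d-1)/2$---is what produces both the critical exponent $2(d-\beta)/(d-\beta-1)$ and the anomalous log powers. Second, your heuristic for the $\log^{3/4}(R)$ in dimension $3$ is off: it does not come from a ``two-parameter family of normals'' but from the double logarithm $\eta=2$ in Lemma~\ref{integral}, which arises when $d=3$ because $\beta=1$ and $(d-1)/2=1$ coincide, creating a double endpoint in the one-dimensional integral estimates; after interpolation this contributes $\eta\vartheta/4=1/2$ on top of $1/p=1/4$.

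Finally, note that for the dyadic plan to even start you must verify that $\|\mathcal D_j\|_4$ decays in $j$ at the physical exponent $(d+1)/2$. This is not a free $L^2$-type computation: the relevant threshold in the paper is $\mathrm{Re}(z)>z_4=(3d-\min\{\beta,1\})/4$, and for $d\ge 3$ one has $(d+1)/2<z_4$, so the decay is not obvious without the truncation gain; you must quantify exactly how much the dyadic cutoff buys, which again is the resonance count you have not carried out.
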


The case $d=2$ can be improved in the range $\beta>2/5$ when $\Omega$ is an ellipse $E$. More precisely
we have the following result.

 \begin{theorem} \label{thm_d=2_ellipse} Let $E$ be an ellipse in the plane.

\noindent If $0\leq\beta<1$ then there exists a constant $C$ such that for every $R\ge 2$,
 \begin{align*}
   I(2,E,\mu,p,R)
    \leq
 \begin{cases}
 C & \text{if }p<4+2\beta,\\
  C\log^{1/p}\left(  R\right)  & \text{if }\beta \neq 2/5 \text{ and } p=4+2\beta,\\
   C\log^{1/p+1/12}\left(  R\right)  & \text{if }\beta = 2/5 \text{ and } p=4+2\beta
 \end{cases}
 \end{align*}
 \noindent If $\beta=1$ then there exists a constant $C$ such that for every $R\ge 2$,
 \begin{align*}
  I(2,E,\mu,p,R)
   \leq
 \begin{cases}
 C & \text{if }p<6,\\
  C\log^{5/6}\left(  R\right)  & \text{if }p=6.
 \end{cases}
 \end{align*}
 If $\beta>1$ then there exists a constant $C$ such that for every $R\ge 2$,
 \begin{align*}
 I(2,E,\mu,p,R)
   \leq
 \begin{cases}
 C & \text{if }p<6,\\
  C\log^{2/3}\left(  R\right)  & \text{if }p=6.
 \end{cases}
 \end{align*}
 \end{theorem}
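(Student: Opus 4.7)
The plan is to upgrade the Fourier-analytic argument behind Theorem \ref{thm_d=2} by exploiting the extra arithmetic structure available when $\Omega$ is an ellipse, which pushes the critical exponent up to $p=6$ in the large-$\beta$ regime. By a linear change of variables we may reduce to the case in which $E$ is the unit disk, at the cost of replacing $\mathbb{Z}^{2}$ by a different lattice $\Lambda$. Poisson summation together with the classical asymptotic expansion of the Bessel function $J_{1}$ yields
\[
\mathcal{D}(E,r,x)=\sum_{k\in\Lambda^{*}\setminus\{0\}}r^{1/2}|k|_{E}^{-3/2}\bigl(a(k)e^{2\pi ir|k|_{E}}+\overline{a(k)}e^{-2\pi ir|k|_{E}}\bigr)e^{2\pi ik\cdot x}+\text{l.o.t.},
\]
where $|\cdot|_{E}$ is the norm dual to $E$ and the lower-order term contributes only $O(1)$ to every $L^{p}$ we consider. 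The sub-threshold cases $p<4+2\beta$ for $\beta<1$ are already covered by the strategy of Theorem \ref{thm_d=2}, since there the series is controlled by its $|k|^{-3/2}$ size alone; the new content lies in taking the critical exponent up to $p=6$ when $\beta\ge 1$.

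For the critical case $p=6$, I plan to use Plancherel through $\|\mathcal{D}\|_{L^{6}(\mathbb{T}^{2})}^{6}=\|\mathcal{D}^{3}\|_{L^{2}(\mathbb{T}^{2})}^{2}$, which gives
\[
\int_{\mathbb{T}^{2}}|\mathcal{D}(E,r,x)|^{6}\,dx=\sum_{m\in\Lambda^{*}}\Bigl|\sum_{k_{1}+k_{2}+k_{3}=m}c_{k_{1}}(r)c_{k_{2}}(r)c_{k_{3}}(r)\Bigr|^{2},
\]
with $c_{k}(r)$ the Fourier coefficient displayed above. After expanding the resulting product of six oscillating factors in $r$, the amplitude $r^{3}$ is exactly cancelled by the normalization $r^{-3}$, and integrating in $r$ against $d\mu(r-R)$ converts each exponential $e^{2\pi ir\Phi}$ into $e^{2\pi iR\Phi}\widehat{\mu}(-\Phi)$. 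One is therefore left with a weighted count of sextuples $(k_{1},\ldots,k_{6})\in(\Lambda^{*}\setminus\{0\})^{6}$ satisfying $k_{1}+k_{2}+k_{3}=k_{4}+k_{5}+k_{6}$, with weight $\prod_{j}|k_{j}|_{E}^{-3/2}\,|\widehat{\mu}(\Phi)|$ where $\Phi=\pm|k_{1}|_{E}\pm\cdots\pm|k_{6}|_{E}$.

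The decisive estimate, and where the ellipse structure beats the generic-curvature bound of Theorem \ref{thm_d=2}, is the control of this sextuple sum. Since $|k|_{E}^{2}=Q(k)$ for a fixed positive-definite quadratic form $Q$, the level sets $\{Q(k)=n\}$ and thin annuli $\{n\le Q(k)<n+1\}$ admit sharper lattice counts than for a generic oval: averaged bounds such as $\sum_{n\le N}r_{Q}(n)^{2}\ll N\log N$ replace the pointwise $r_{Q}(n)=O(n^{\varepsilon})$ that one would use generically. A dyadic decomposition of $|\Phi|$ then lets me trade the lattice count on each annulus against the Fourier decay $|\widehat{\mu}(\Phi)|\lesssim|\Phi|^{-\beta}$; summing these dyadic contributions produces $\log^{5/6}R$ at $p=6$ when $\beta=1$ and the sharper $\log^{2/3}R$ when $\beta>1$, while for $p<6$ the series in the dyadic parameter converges strictly, giving a bound independent of $R$. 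The main obstacle will be the near-diagonal regime $|\Phi|\lesssim R^{-1}$, where the only available cancellation comes from the combinatorial pairing of the six signed norms $\pm|k_{j}|_{E}$, and one must carefully separate the truly diagonal configurations (where the $Q(k_{j})$ pair up exactly) from those with approximate but not exact matching.
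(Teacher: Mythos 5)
Your route is genuinely different from the paper's, but it contains a concrete error and its key technical steps do not close for the statement as written. The claim that ``the sub-threshold cases $p<4+2\beta$ for $\beta<1$ are already covered by the strategy of Theorem \ref{thm_d=2}'' is incorrect: for $2/5<\beta<1$, Theorem \ref{thm_d=2} only gives boundedness for $p<4+10\beta/(3+5\beta)$ (respectively $p<4+10/11$ for $\beta\geq1/2$), which is strictly less than $4+2\beta$, so the ellipse structure is already needed in that range, not only when $\beta\geq1$, and your sketch says nothing about it. More seriously, the arithmetic input $\sum_{n\le N}r_{Q}(n)^{2}\ll N\log N$ is a fact about integer binary quadratic forms, whereas the theorem is stated for an arbitrary ellipse. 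For irrational $Q$ the level sets $\{Q(k)=n\}$ are generically singletons and the relevant thin-annulus counts $\#\{k:n\le Q(k)<n+1\}$ do not satisfy the same second-moment bound; their correlations depend on Diophantine properties of $Q$. You also flag but do not resolve the near-diagonal regime, which is exactly where the difficulty lives, and the Plancherel expansion of $\int_{\mathbb T^2}|\mathcal D|^6\,dx$ into a sextuple sum is applied to a Fourier series that does not converge absolutely, so the mollification of Remark \ref{r1} is needed even to justify the starting identity.

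For comparison, the paper's proof never touches the arithmetic of the lattice. It continues the sums-to-integrals reduction of Lemma \ref{lm_6_integral_ellipsoid}; after the change of variable $M^{T}x\mapsto y$ the support function becomes the Euclidean norm, and the gain comes from Lemma \ref{LM_Integrale}, where the level sets $\{|x|+|k-x|=\tau\}$ are exact ellipses with foci $0$ and $k$, so the integral can be evaluated in elliptical coordinates with an explicit Jacobian. This lowers the $L^4$ threshold from $\max\{(6-\beta)/4,11/8\}$ to $\max\{(6-\beta)/4,5/4\}$ and the $L^6$ threshold from $\max\{(10-\beta)/6,8/5\}$ to $\max\{(10-\beta)/6,3/2\}$ (Lemmas \ref{lm_p=4_disc} and \ref{lm_p=6_disc}), and the theorem then follows by the same complex interpolation used for the general convex body. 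That argument is purely analytic, needs no rationality hypothesis on the ellipse, and handles the $2/5<\beta<1$ and $\beta\geq1$ ranges uniformly.
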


When the measure $\mu$ is the Dirac delta $\delta_0$ centered at $0$, then 
\[
I(d,\Omega,\delta_0,p,R)=
\left\{    \int_{\mathbb{T}^{d}}| R^{-(d-1)/2}\mathcal D(\Omega,R,x)  | ^{p}dx\right\}^{1/p}.
\]
In this case,  $|\widehat\delta_0(\xi)|=1$ so that $\beta=0$, and the above Theorems \ref{thm_d=2} and
\ref{thm_d>2} can be restated as
\begin{corollary}
\[
\left\{    \int_{\mathbb{T}^{d}}| R^{-(d-1)/2}\mathcal D(\Omega,R,x)  | ^{p}dx\right\}^{1/p}
\leq
\begin{cases}
 C & \text{if }p<2d/(d-1),\\
  C\log^{1/p}\left(  R\right)  & \text{if }p=2d/(d-1).
 \end{cases}
\]
\end{corollary}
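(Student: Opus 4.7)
The plan is to deduce the corollary immediately from Theorems \ref{thm_d=2} and \ref{thm_d>2} by verifying that the Dirac measure $\mu=\delta_{0}$ is an admissible choice corresponding to the parameter value $\beta=0$. Clearly $\delta_{0}$ is a positive Borel measure with compact support contained in $[0,+\infty)$ (its support is the singleton $\{0\}$), and a direct computation from the definition of the Fourier transform given in the introduction yields
\[
\widehat{\delta_{0}}(\xi)=\int_{\mathbb{R}}\exp(-2\pi i\xi r)\,d\delta_{0}(r)=1,
\]
so the decay hypothesis $|\widehat{\mu}(\xi)|\le C(1+|\xi|)^{-\beta}$ holds with $C=1$ and $\beta=0$.

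Next, I would unfold the definition of the translated measure. For any continuous $f$,
\[
\int_{\mathbb{R}}f(r)\,d\delta_{0}(r-R)=\int_{\mathbb{R}}f(r+R)\,d\delta_{0}(r)=f(R),
\]
which gives
\[
I(d,\Omega,\delta_{0},p,R)=\left\{\int_{\mathbb{T}^{d}}\bigl|R^{-(d-1)/2}\mathcal{D}(\Omega,R,x)\bigr|^{p}\,dx\right\}^{1/p},
\]
precisely the quantity to be estimated.

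Finally, I would specialize the two theorems at $\beta=0$. For $d=2$ the value $\beta=0$ lies in the range $0\le\beta<2/5$ of Theorem \ref{thm_d=2}, whose threshold is $p=4+2\beta=4=2d/(d-1)$, with log factor $\log^{1/p}(R)$ at the endpoint. For $d\ge3$ the value $\beta=0$ lies in the range $0\le\beta<1$ of Theorem \ref{thm_d>2}, whose threshold is $p=2(d-\beta)/(d-\beta-1)=2d/(d-1)$, again with log factor $\log^{1/p}(R)$ at the endpoint. In both regimes the bound matches the one claimed in the corollary, so the statement follows. There is no substantive obstacle here: once the two master theorems are available, the corollary is a mere specialization and the only work is to identify $\beta=0$ and to unwind the convention for integration against $d\delta_{0}(r-R)$.
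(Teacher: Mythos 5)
Your proof is correct and follows essentially the same route as the paper: the text preceding the corollary identifies $\widehat{\delta_0}\equiv 1$ (hence $\beta=0$), unwinds $I(d,\Omega,\delta_0,p,R)$ to the pointwise evaluation at $r=R$, and specializes Theorems \ref{thm_d=2} and \ref{thm_d>2} at $\beta=0$, where the thresholds $4+2\beta$ and $2(d-\beta)/(d-\beta-1)$ both reduce to $2d/(d-1)$. The only cosmetic quibble is that the identity $\int_{\mathbb R}f(r)\,d\delta_0(r-R)=f(R)$ holds for all Borel $f$, so restricting to continuous $f$ is unnecessary.
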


This recovers recent results of M. Huxley \cite{Huxley3} for the case $d=2$
and L. Brandolini, L. Colzani, G. Gigante, G. Travaglini \cite{BCGT} for the general dimension $d$.

If $\mu $ is the uniformly distributed measure in the interval $\left\{
0<r<1\right\}  $, then 
\[
I(d,\Omega,\mu,p,R)=
\left\{ \int_{R}^{R+1}   \int_{\mathbb{T}^{d}}| r^{-(d-1)/2}\mathcal D(\Omega,r,x)  | ^{p}dxdr\right\}^{1/p}.
\]
The Fourier transform of the uniformly distributed measure in $\left\{
0<r<1\right\}  $ has decay $\beta=1$,
\[
\widehat{\mu}\left(  \xi\right)  =%
{\displaystyle\int_{0}^{1}}
\exp\left(  -2\pi i\xi r\right)  dr=\exp\left(  -\pi i\xi\right)
\dfrac{\sin\left(  \pi\xi\right)  }{\pi\xi}.
\]
On the other hand, if $\psi\left(  r\right)  $\ is a non negative smooth
function with integral one and support in $0\leq r\leq1$, one can consider a
smoothed average
\[
\left\{ \int_{\mathbb R} \int_{\mathbb{T}^{d}}\vert r^{-\left(
d-1\right)  /2}\mathcal{D}\left(  \Omega,r,x\right)  \vert ^{p}%
dx\psi\left(  r-R \right)  dr\right\}
^{1/p}.
\]
This smoothed average is equivalent to the uniform average over $\left\{
R<r<R+1\right\}  $, but the decay of the Fourier transform $\widehat{\psi
}  $ is faster than any power $\beta$. Hence for the
uniformly distributed measure in $\left\{  0<r<1\right\} $ Theorems \ref{thm_d=2} and
\ref{thm_d>2}
apply with the indices corresponding to $\beta>1$, and one obtains the following

\begin{corollary}
\begin{align*}
&  \left\{ \int_{R}^{R+1}   \int_{\mathbb{T}^{d}}| r^{-(d-1)/2}\mathcal D(\Omega,r,x)  | ^{p}dxdr\right\}^{1/p}\\
&  \leq\left\{
\begin{array}
[c]{ll}%
C & \text{if }d=2\text{ and }p<4+10/11\text{,}\\
C\log^{1/p}\left(  R\right)  & \text{if }d=2\text{ and }p=4+10/11\text{,}\\
C & \text{if }d\geq3\text{ and }p<2(d-1)/(d-2)\text{,}\\
C\log^{1/2}\left(  R\right)  & \text{if }d=3\text{ and }p=2(d-1)/(d-2)\text{,}\\
C\log^{1/p  }\left(  R\right)  &
\text{if }d>3\text{ and }p=2(  d-1)  /( d-2)\text{.}%
\end{array}
\right.
\end{align*}
\end{corollary}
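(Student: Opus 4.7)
The plan is to deduce this Corollary from Theorems \ref{thm_d=2} and \ref{thm_d>2} exactly along the lines of the paragraph preceding the statement: replace the indicator $\chi_{[0,1]}$ by a smooth majorant whose Fourier transform decays faster than any power.

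First I would pick a nonnegative $\psi \in C^{\infty}(\mathbb{R})$ supported in $[0,2]$ with $\psi(r) \ge 1$ for every $r \in [0,1]$, and set $d\nu(r) = \psi(r)\,dr$. Then $\nu$ is a positive Borel measure with compact support in $[0,+\infty)$; since $\widehat{\nu} = \widehat{\psi}$ is a Schwartz function, $\nu$ satisfies $|\widehat{\nu}(\xi)| \le C_{\beta}(1+|\xi|)^{-\beta}$ for every $\beta \ge 0$. Thus $\nu$ fits the standing hypotheses of Theorems \ref{thm_d=2} and \ref{thm_d>2} with any value of $\beta$ we wish to feed into them.

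Pointwise $\chi_{[R,R+1]}(r) \le \psi(r-R)$, so by integrating the nonnegative quantity $\int_{\mathbb{T}^{d}} |r^{-(d-1)/2}\mathcal{D}(\Omega,r,x)|^{p}\,dx$ against these two comparable weights I would obtain
\[
\int_{R}^{R+1}\!\int_{\mathbb{T}^{d}} |r^{-(d-1)/2}\mathcal{D}(\Omega,r,x)|^{p}\,dx\,dr \;\le\; \int_{\mathbb{R}}\!\int_{\mathbb{T}^{d}} |r^{-(d-1)/2}\mathcal{D}(\Omega,r,x)|^{p}\,dx\,d\nu(r-R) \;=\; I(d,\Omega,\nu,p,R)^{p}.
\]
For $d = 2$ I would then apply Theorem \ref{thm_d=2} to $\nu$ with any fixed $\beta > 1/2$, which yields boundedness for $p < 4 + 10/11$ and the factor $C\log^{1/p}(R)$ at the endpoint. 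For $d \ge 3$ I would apply Theorem \ref{thm_d>2} with any fixed $\beta > 1$, obtaining boundedness for $p < 2(d-1)/(d-2)$ with the advertised endpoint behaviour ($C\log^{1/2}(R)$ when $d = 3$ and $C\log^{1/p}(R)$ when $d > 3$). Taking $p$-th roots in the inequality above reproduces the Corollary verbatim.

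There is essentially no analytic obstacle: the argument is pure monotonicity combined with the observation that a smooth compactly supported density enjoys arbitrarily fast Fourier decay, which is exactly what lets us reach the stronger endpoint constants that Theorems \ref{thm_d=2} and \ref{thm_d>2} provide in the regimes $\beta > 1/2$ (for $d = 2$) and $\beta > 1$ (for $d \ge 3$). All of the genuine work has already been absorbed into those two theorems; this Corollary is only the observation that nothing is lost by smoothing the sharp average over $[R,R+1]$.
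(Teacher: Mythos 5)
Your argument is correct and follows exactly the route the paper sketches in the paragraph preceding the corollary: dominate the uniform average over $[R,R+1]$ by a smoothed average against a compactly supported smooth density, whose Fourier transform decays faster than any power, and then invoke Theorems \ref{thm_d=2} (with $\beta>1/2$) and \ref{thm_d>2} (with $\beta>1$). Your specific choice of a majorant $\psi\ge\chi_{[0,1]}$ supported in $[0,2]$ is in fact a slightly cleaner implementation than the paper's phrasing (a probability density supported in $[0,1]$ cannot literally majorize $\chi_{[0,1]}$, so the paper's ``equivalent'' implicitly requires a small covering argument), and it is fully compatible with the theorems' hypothesis that the support lie in $\{0\le r<\infty\}$.
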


As mentioned before, when $d=2$ and $\Omega=E$ (an ellipse) this result can be improved. By Theorem 
\ref{thm_d=2_ellipse},

\begin{corollary}
\[
 \left\{ \int_{R}^{R+1}   \int_{\mathbb{T}^{d}}| r^{-(d-1)/2}\mathcal D(E,r,x)  | ^{p}dxdr\right\}^{1/p}  \leq\left\{
\begin{array}
[c]{ll}%
C & \text{if }d=2\text{ and }p<6\text{,}\\
C\log^{2/3}\left(  R\right)  & \text{if }d=2\text{ and }p=6\text{.}\\%
\end{array}
\right.
\]
\end{corollary}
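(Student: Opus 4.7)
The plan is to deduce this directly from Theorem~\ref{thm_d=2_ellipse} applied to a smoothed version of the uniform measure on $[0,1]$, along the lines sketched between the previous two corollaries.

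First, I would fix a nonnegative $\psi\in C_c^\infty(\mathbb R)$ with $\psi(r)\geq 1$ for all $r\in[0,1]$ and with $\mathrm{supp}(\psi)$ compact. Since then $\chi_{[R,R+1]}(r)\leq \psi(r-R)$ pointwise, one has
\[
\int_R^{R+1}\!\int_{\mathbb T^2} |r^{-1/2}\mathcal D(E,r,x)|^{p}\,dx\,dr \leq \int_{\mathbb R}\!\int_{\mathbb T^2} |r^{-1/2}\mathcal D(E,r,x)|^{p}\,dx\,\psi(r-R)\,dr,
\]
and the right-hand side is precisely $I(2,E,\psi(r)\,dr,p,R)^{p}$ in the notation of the introduction.

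Next, since $\psi$ is smooth and compactly supported, $\widehat\psi$ decays faster than any polynomial, so the measure $\psi(r)\,dr$ qualifies as an admissible $\mu$ with $\beta$ as large as desired; in particular we may take some $\beta>1$. Applying the $\beta>1$ branch of Theorem~\ref{thm_d=2_ellipse} then gives $I(2,E,\psi(r)\,dr,p,R)\leq C$ when $p<6$ and $I(2,E,\psi(r)\,dr,p,R)\leq C\log^{2/3}(R)$ when $p=6$, which, after taking $p$-th roots, is exactly the stated bound.

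The entire substantive content of this corollary lives inside Theorem~\ref{thm_d=2_ellipse}, so I do not expect a genuine obstacle here: the only point requiring any care is the choice of the dominating bump $\psi$, chosen so that the rough indicator $\chi_{[R,R+1]}$ is pointwise controlled by a smooth, compactly supported kernel whose Fourier transform decays rapidly enough to fall in the favorable $\beta>1$ regime of Theorem~\ref{thm_d=2_ellipse}.
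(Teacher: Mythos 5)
Your overall strategy is exactly the one the paper intends: dominate $\chi_{[R,R+1]}$ pointwise by a smooth compactly supported bump so that $\widehat\psi$ decays faster than any power, and then invoke the $\beta>1$ branch of Theorem~\ref{thm_d=2_ellipse}. That is correct.

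There is, however, a small but genuine incompatibility in the choice of $\psi$. The admissibility hypothesis for $\mu$ in the paper is that $\mu$ be a positive Borel measure with compact support contained in $\{0\le r<+\infty\}$. A $\psi\in C_c^\infty(\mathbb R)$ that is $\ge 1$ on $[0,1]$ must, by continuity and smoothness, be strictly positive on a neighbourhood of $0$, hence its support extends to the left of $0$. Thus $\psi\,dr$ does \emph{not} satisfy the stated support hypothesis, and the sentence ``the measure $\psi(r)\,dr$ qualifies as an admissible $\mu$'' is not quite right. The simplest repair is to translate: pick $\psi\in C_c^\infty$ nonnegative, supported in $[0,3]$ (hence vanishing to infinite order at $0$), with $\psi\ge 1$ on $[1,2]$; then $\chi_{[R,R+1]}(r)=\chi_{[1,2]}\bigl(r-(R-1)\bigr)\le\psi\bigl(r-(R-1)\bigr)$, so the left-hand side is dominated by $I(2,E,\psi\,dr,p,R-1)^p$, and Theorem~\ref{thm_d=2_ellipse} applies for $R-1\ge 2$, i.e.\ $R\ge 3$; the remaining range $2\le R\le 3$ is a fixed compact set of $r$'s over which the $L^p$ norm of the discrepancy is trivially bounded. (Alternatively one can note that the support hypothesis only enters the proofs to keep $r$ away from $0$, which is automatic here once $R\ge 2$, but then one is re-proving the theorem under slightly weaker hypotheses rather than applying it.) With this adjustment your argument is complete, and it is essentially what the paper does -- the paper in fact glosses over the same point by merely asserting ``equivalence'' between the smoothed and sharp-cutoff averages.
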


Observe that the range of indices for
which the  $L^{p}$ norm remains uniformly bounded with this choice of $\mu$
is larger than the range of indices in \cite{BCGT} and \cite{Huxley3} (those that we obtain
when $\mu=\delta_0$).

As an intermediate case between the two preceeding examples, one can consider a measure $d\mu\left(  r\right)=r^{-\alpha}\chi_{\left\{  0<r<1\right\}
}\left(  r\right)  dr$, with $0<\alpha<1$. In this case $\left\vert \widehat{\mu
}\left(  \xi\right)  \right\vert \leq C\left(  1+\left\vert \xi\right\vert
\right)  ^{\alpha-1}$, that is $\beta=1-\alpha$.
As a more sophisticated intermediate example, recall that a compactly supported probability measure is a Salem measure if its Fourier dimension $\gamma
=\sup\left\{  \delta:\ \left\vert \widehat{\mu}\left(  \zeta\right)
\right\vert \leq C\left(  1+\left\vert \zeta\right\vert \right)  ^{-\delta
/2}\ \right\}  $ is equal to the Hausdorff dimension of the support. Such
measures exist for every dimension $0<\gamma<1$. See \cite[Section 12.17]{PM}. 
The above theorems assert that the discrepancy cannot be too large in mean on the
supports of translations of these measures.

The techniques used to prove the above theorems also apply to the estimates of
mixed $L^{p}(L^2)$ norms of the discrepancy:
\[
\left\{  \int_{\mathbb{T}^{d}}\left(
{\displaystyle\int_{\mathbb{R}}}
\left\vert r^{-\left(  d-1\right)  /2}\mathcal{D}\left(  \Omega,r,x\right)
\right\vert ^{2}d\mu\left(  H^{-1}(r-R)\right)  \right)  ^{p/2}dx\right\}
^{1/p}.
\]

This has been done by the authors in \cite{CGG}. Here it suffices to remark that the set of $p$'s that give bounded mixed $L^p(L^2)$ norms is larger than the set of $p$'s that give bounded pure
$L^p$ norms. See also \cite{Gariboldi}.

We would like to thank Luca Brandolini and Giancarlo Travaglini for several discussions 
on this subject during the early stages of the preparation of this paper.

\section{Preliminary lemmas}

The proofs of the theorems will be splitted into a number of lemmas, some of them well known.
The starting point is the observation of D. G. Kendall that the discrepancy
$\mathcal{D}\left(  \Omega,r,x\right)  $ is a periodic function of the
translation, and it has a Fourier expansion with coefficients that are a
sampling of the Fourier transform of $\Omega$,
\[
\widehat{\chi}_{\Omega}\left(  \xi\right)  =%
{\displaystyle\int_{\Omega}}
\exp\left(  -2\pi i\xi x\right)  dx.
\]

\begin{lemma}
\label{Fourier} The number of integer points in $r\Omega-x$, a translated by a
vector $x\in\mathbb{R}^{d}$\ and dilated by a factor $r>0$\ of a domain
$\Omega$\ in the $d$ dimensional Euclidean space, is a periodic function of the
translation with Fourier expansion
\[%
{\displaystyle\sum_{k\in\mathbb{Z}^{d}}}
\chi_{r\Omega-x}(k)=%
{\displaystyle\sum_{n\in\mathbb{Z}^{d}}}
r^{d}\widehat{\chi}_{\Omega}\left(  rn\right)  \exp(2\pi inx).
\]
In particular,
\[
\mathcal{D}\left( \Omega,r,x\right)  =%
{\displaystyle\sum_{n\in\mathbb{Z}^{d}\setminus\left\{  0\right\}  }}
r^{d}\widehat{\chi}_{\Omega}\left(  rn\right)  \exp(2\pi inx).
\]

\end{lemma}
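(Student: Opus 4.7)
The plan is to recognize this as Poisson summation in disguise and prove it by periodizing and computing Fourier coefficients directly. First I would define $F(x) = \sum_{k \in \mathbb{Z}^d} \chi_{r\Omega - x}(k) = \sum_{k \in \mathbb{Z}^d} \chi_{r\Omega}(k+x)$. Since $r\Omega$ is bounded, for each fixed $x$ only finitely many terms are nonzero, and $F$ is manifestly $\mathbb{Z}^d$-periodic and bounded, hence lies in $L^2(\mathbb{T}^d)$.

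Next, I would compute the $n$-th Fourier coefficient of $F$ on $\mathbb{T}^d$. Interchanging sum and integral (legitimate because the sum is locally finite) and making the substitution $y = x+k$ in each summand, together with the identity $e^{-2\pi i n\cdot k}=1$ for $n,k\in\mathbb{Z}^d$, one obtains
\[
\widehat{F}(n)=\int_{\mathbb{T}^d}F(x)\,e^{-2\pi i n\cdot x}\,dx
=\sum_{k\in\mathbb{Z}^d}\int_{\mathbb{T}^d+k}\chi_{r\Omega}(y)\,e^{-2\pi i n\cdot y}\,dy
=\widehat{\chi_{r\Omega}}(n).
\]
The standard scaling identity for the Fourier transform then gives $\widehat{\chi_{r\Omega}}(n) = r^d\,\widehat{\chi}_\Omega(rn)$, matching the coefficients in the claim.

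The only remaining point is the sense in which $F$ equals the Fourier series $\sum_n r^d\widehat{\chi}_\Omega(rn)\,e^{2\pi i n\cdot x}$. Since $F \in L^2(\mathbb{T}^d)$, Parseval guarantees convergence of the series in $L^2(\mathbb{T}^d)$. The $n=0$ term equals $r^d\,\widehat{\chi}_\Omega(0)=r^d|\Omega|$; subtracting it yields the stated expansion for $\mathcal{D}(\Omega,r,x)$. The main (and only) subtlety is that the coefficients decay only like $|n|^{-(d+1)/2}$ (using that $\Omega$ is convex with smooth boundary of positive curvature), so the series is not absolutely convergent, and one should be careful to interpret the equality in the $L^2$ sense — which is precisely the mode of convergence needed for all the subsequent $L^p$ estimates via Plancherel and interpolation.
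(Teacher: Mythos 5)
Your argument is correct and is exactly the standard proof of the Poisson summation formula by periodization and computation of Fourier coefficients; the paper simply cites this as "a particular case of the Poisson summation formula" without writing it out. You also correctly flag the $L^2$ mode of convergence, which the paper itself addresses in the remark immediately following the lemma.
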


\begin{proof}
This is a particular case of the Poisson summation formula.
\end{proof}

\begin{remark}\label{r1}
We emphasize that the Fourier expansion of the discrepancy converges at least
in $L^{2}\left(  \mathbb{T}^{d}\right)  $, but we are not claiming that it
converges pointwise. Indeed, the discrepancy is discontinuous, hence the
associated Fourier expansion does not converge absolutely or uniformly. To
overcome this problem, one can introduce a mollified discrepancy. If the
domain $\Omega$\ is convex and contains the origin, then there exists
$\varepsilon>0$\ such that if $\varphi\left(  x\right)  $\ is a non negative
smooth radial function with support in $\left\{  \left\vert x\right\vert
\leq\varepsilon\right\}  $\ and with integral 1, and if $0<\delta\leq1$\ and
$r\geq1$, then
\[
\delta^{-d}\varphi(\delta^{-1}\cdot)\ast\chi_{(r-\delta)\Omega}(x)
\leq\chi_{r\Omega}(x)
\leq
\delta^{-d}\varphi(\delta^{-1}\cdot)\ast\chi_{(r+\delta)\Omega}(x)
\]
and therefore
\begin{gather*}
\left\vert \Omega\right\vert \left(  \left(  r-\delta\right)  ^{d}%
-r^{d}\right)  +\left(  r-\delta\right)  ^{d}%
{\displaystyle\sum_{n\in\mathbb{Z}^{d}\setminus\left\{  0\right\}  }}
\widehat{\varphi}\left(  \delta n\right)  \widehat{\chi}_{\Omega}\left(
\left(  r-\delta\right)  n\right)  \exp\left(  2\pi inx\right) \\
\leq%
\mathcal D\left(\Omega,r,x\right)\\
\leq\left\vert \Omega\right\vert \left(  \left(  r+\delta\right)  ^{d}%
-r^{d}\right)  +\left(  r+\delta\right)  ^{d}%
{\displaystyle\sum_{n\in\mathbb{Z}^{d}\setminus\left\{  0\right\}  }}
\widehat{\varphi}\left(  \delta n\right)  \widehat{\chi}_{\Omega}\left(
\left(  r+\delta\right)  n\right)  \exp\left(  2\pi inx\right)  .
\end{gather*}
One has $\left\vert \left(  r+\delta\right)  ^{d}-r^{d}\right\vert \leq
Cr^{d-1}\delta$, and one can work with the mollified discrepancy
defined by
\[
\mathcal D_{\delta}(\Omega,r,x)=
 r  ^{d}%
{\displaystyle\sum_{n\in\mathbb{Z}^{d}\setminus\left\{  0\right\}  }}
\widehat{\varphi}\left(  \delta n\right)  \widehat{\chi}_{\Omega}\left(
 rn\right)  \exp\left(  2\pi inx\right)  .
\]
Observe that since $\left\vert \widehat{\varphi
}\left(  \zeta\right)  \right\vert \leq C\left(  1+\left\vert \zeta\right\vert
\right)  ^{-\lambda}$ for every $\lambda>0$, the mollified Fourier
expansion has no problems of convergence.
\end{remark}

Let us recall that the support function of a convex set $\Omega\subset \mathbb R^d$ is defined by
$g(x)=\sup_{y\in\Omega}\left\{  x\cdot y\right\} $. 
When $\Omega$ is strictly convex with smooth boundary, the point $y$ that realizes the supremum in this definition  
is the point of $\partial \Omega$ where the outer normal
is parallel to $x$ (see Lemma \ref{Support} (2) and Figure \ref{F1} ).
For example, if $\Omega$ is the unit ball centered at the origin, then $g(x)=|x|$.

\begin{figure}[h!]
\begin{center}
\includegraphics[scale=0.8]{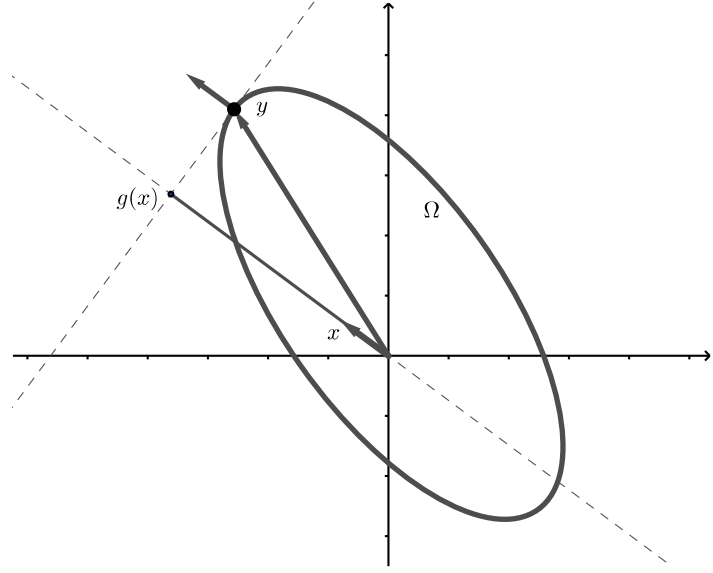}
\end{center}
\caption{The value of $g(x)$ when $|x|=1$. }\label{F1}
\end{figure}

\begin{lemma}
\label{Fourier Asymptotic copy(1)} Assume that $\Omega$\ is a convex body in
$\mathbb{R}^{d}$\ with smooth boundary with strictly positive Gaussian
curvature. Let $g\left(  x\right)  =\sup_{y\in\Omega
}\left\{  x\cdot y\right\}  $ be the support function of $\Omega$. Then, there exist functions $\left\{
a_{j}\left(  \xi\right)  \right\}  _{j=0}^{+\infty}$ and $\left\{
b_{j}\left(  \xi\right)  \right\}  _{j=0}^{+\infty}$ homogeneous of degree $0$
and smooth in $\mathbb{R}^{d}\setminus\left\{  0\right\}  $ such that the Fourier
transform of the characteristic function of $\Omega$ for $\left\vert
\xi\right\vert \rightarrow+\infty$ has the asymptotic expansion
\begin{align*}
&  \widehat{\chi}_{\Omega}\left(  \xi\right)  =%
{\displaystyle\int_{\Omega}}
\exp\left(  -2\pi i\xi\cdot x\right)  dx\\
&  =\exp\left(  -2\pi ig\left(  \xi\right)  \right)  \left\vert \xi\right\vert
^{-\left(  d+1\right)  /2}%
{\displaystyle\sum_{j=0}^{h}}
a_{j}\left(  \xi\right)  \left\vert \xi\right\vert ^{-j}\\
&+\exp\left(  2\pi
ig\left(  -\xi\right)  \right)  \left\vert \xi\right\vert ^{-\left(
d+1\right)  /2}%
{\displaystyle\sum_{j=0}^{h}}
b_{j}\left(  \xi\right)  \left\vert \xi\right\vert ^{-j}
  +\mathcal{O}\left(  \left\vert \xi\right\vert ^{-\left(  d+2h+3\right)
/2}\right)  .
\end{align*}
The functions $a_{j}\left(  \xi\right)  $ and $b_{j}\left(  \xi\right)  $
depend on a finite number of derivatives of a parame-trization of the boundary
of $\Omega$ at the points with outward unit normal $\pm\xi/\left\vert
\xi\right\vert $. In particular, $a_{0}\left(  \xi\right)  $ and $b_{0}\left(
\xi\right)  $ are, up to some absolute constants, equal to $K\left(  \pm
\xi\right)  ^{-1/2}$, with $K\left(  \pm\xi\right)  $\ the Gaussian curvature
of $\partial\Omega$\ at the points with outward unit normal $\pm\xi/\left\vert
\xi\right\vert $.
\end{lemma}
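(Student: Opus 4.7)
The plan is to derive the asymptotic expansion via the classical two-step reduction: first convert the volume integral $\widehat{\chi}_\Omega(\xi)$ to an oscillatory integral on $\partial\Omega$ via the divergence theorem, and then apply the method of stationary phase on the boundary.

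For the first step, I would observe that $\nabla_x\!\left(\dfrac{-1}{2\pi i|\xi|^2}\,\xi\, e^{-2\pi i\xi\cdot x}\right)$ has divergence $e^{-2\pi i\xi\cdot x}$, so for $\xi\neq 0$ the divergence theorem yields
\[
\widehat{\chi}_\Omega(\xi)=\frac{-1}{2\pi i|\xi|^2}\int_{\partial\Omega}(\xi\cdot \nu(y))\, e^{-2\pi i\xi\cdot y}\,d\sigma(y),
\]
where $\nu(y)$ is the outer unit normal. This already builds in a prefactor $|\xi|^{-1}$ and reduces the problem to a $(d-1)$-dimensional oscillatory integral with phase $\phi(y)=\xi\cdot y$ restricted to $\partial\Omega$. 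Using a smooth partition of unity on $\partial\Omega$ subordinate to coordinate patches, I can split this into pieces where the boundary is parametrized as a graph.

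For the second step, on each patch the stationary points of $\phi|_{\partial\Omega}$ are exactly the points where $\nu(y)\parallel\xi$. Since $\Omega$ is strictly convex with positive Gaussian curvature, the Gauss map $\partial\Omega\to S^{d-1}$ is a diffeomorphism, so there are exactly two such points: $y_+(\xi)$ with $\nu=\xi/|\xi|$ where $\phi=g(\xi)$, and $y_-(\xi)$ with $\nu=-\xi/|\xi|$ where $\phi=-g(-\xi)$ (cf.\ the definition of the support function and Figure~\ref{F1}). Applying the stationary phase expansion (as in Stein's \emph{Harmonic Analysis}, Ch.\ VIII) to the large parameter $|\xi|$ produces, for each critical point, an expansion of the form
\[
\exp(\mp 2\pi i g(\pm\xi))\,|\xi|^{-(d-1)/2}\sum_{j=0}^{h}c_j(\xi)|\xi|^{-j}+\mathcal O(|\xi|^{-(d-1)/2-h-1}),
\]
and combining with the $|\xi|^{-2}$ from the divergence step yields the stated overall rate $|\xi|^{-(d+1)/2}$ and remainder $\mathcal O(|\xi|^{-(d+2h+3)/2})$. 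The fact that $c_j$ is homogeneous of degree $0$ comes from scaling $\xi\mapsto t\xi$: the critical points $y_\pm(\xi)$ depend only on the direction $\xi/|\xi|$, as does the Hessian of $\phi|_{\partial\Omega}$ at each of them after normalizing out the large parameter.

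The main technical point is the identification of the leading coefficient. The Hessian of $\phi|_{\partial\Omega}$ at $y_\pm(\xi)$, restricted to the tangent plane, equals $|\xi|$ times the second fundamental form at $y_\pm(\xi)$ up to sign; hence its determinant is $\pm|\xi|^{d-1}K(\pm\xi)$, where $K(\pm\xi)$ is the Gaussian curvature at the point with normal $\pm\xi/|\xi|$. The stationary phase formula therefore produces a factor $|\det(\text{Hess})|^{-1/2}=|\xi|^{-(d-1)/2}K(\pm\xi)^{-1/2}$, multiplied by $\xi\cdot\nu(y_\pm)=\pm|\xi|$ from the divergence-theorem integrand, giving $a_0(\xi)$ and $b_0(\xi)$ proportional to $K(\pm\xi)^{-1/2}$ as claimed. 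The higher coefficients $a_j,b_j$ come from the standard recursive formulas in stationary phase, each involving only finitely many derivatives of the local parametrization of $\partial\Omega$ at $y_\pm(\xi)$, which makes them smooth and degree-$0$ homogeneous in $\xi$ on $\mathbb R^d\setminus\{0\}$. The principal obstacle is simply bookkeeping: writing the boundary parametrically, computing the local Hessian explicitly in terms of the curvature tensor, and verifying that the error terms from the partition of unity and from the truncation of the stationary phase series combine to give the uniform bound of the stated order.
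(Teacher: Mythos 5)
Your proposal is a correct and essentially complete sketch of the standard proof of this classical lemma. The paper itself does not prove the statement: it simply cites the standard references (Gelfand--Graev--Vilenkin, Herz, Hlawka, Stein) and illustrates the phenomenon by writing out the Fourier transform of a ball explicitly via Bessel-function asymptotics. Your outline --- reduce $\widehat{\chi}_\Omega$ to a boundary oscillatory integral via the divergence theorem, localize with a partition of unity, identify the two nondegenerate critical points of $y\mapsto\xi\cdot y$ on $\partial\Omega$ (the preimages under the Gauss map of $\pm\xi/|\xi|$, with critical values $g(\xi)$ and $-g(-\xi)$), apply stationary phase to get the $|\xi|^{-(d-1)/2}$ expansion, and multiply by the $|\xi|^{-1}$ prefactor from the divergence step --- is exactly the proof carried out in those references, and the bookkeeping you describe (Hessian equals $\pm|\xi|$ times the second fundamental form, hence $|\det\text{Hess}|^{1/2}=|\xi|^{(d-1)/2}K(\pm\xi)^{1/2}$; the $\pm|\xi|$ from $\xi\cdot\nu(y_\pm)$; homogeneity of degree $0$ from scaling $\xi\mapsto t\xi$) is correct and reproduces the claimed form of the leading coefficients. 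One minor point worth keeping in mind for a full write-up: the sign of the Hessian differs at the two critical points (one is a maximum, the other a minimum of the phase restricted to $\partial\Omega$), which is what produces the $\mp(d+1)\pi/4$ phase shift hidden in the constants --- visible in the cosine argument of the ball example the paper records. Otherwise nothing is missing.
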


\begin{proof}
This is a classical result. See e.g. \cite{GGV}, 
\cite{Herz_decad}, \cite{Hlawka}, \cite{Stein}. Here, as an explicit example, we just recall that the
Fourier transform of a ball $\left\{  x\in\mathbb{R}^{d}:\left\vert
x\right\vert \leq R\right\}  $\ can be expressed in terms of a Bessel
function, and Bessel functions have simple asymptotic expansions in terms of
trigonometric functions,
\begin{align*}
&  \widehat{\chi}_{\left\{  \left\vert x\right\vert \leq R\right\}  }\left(
\xi\right)  =R^{d}\widehat{\chi}_{\left\{  \left\vert x\right\vert
\leq1\right\}  }\left(  R\xi\right)  =R^{d}\left\vert R\xi\right\vert
^{-d/2}J_{d/2}\left(  2\pi\left\vert R\xi\right\vert \right) \\
& =\pi^{-1}R^{(d-1)/2}\left\vert \xi\right\vert ^{-\left(  d+1\right)
/2}\cos\left(  2\pi R\left\vert \xi\right\vert -\left(  d+1\right)
\pi/4\right) \\
&  -2^{-4}\pi^{-2}\left(  d^{2}-1\right)  R^{(d-3)/2}\left\vert \xi\right\vert
^{-\left(  d+3\right)  /2}\sin\left(  2\pi R\left\vert \xi\right\vert -\left(
d+1\right)  \pi/4\right)  +...\\
&+O(R^{(d-2h-3)/2}|\xi|^{-(d+2h+3)/2}).
\end{align*}
See \cite{SW}. More generally, also the Fourier transform of an ellipsoid, that is an affine
image of a ball, can be expressed in terms of Bessel functions.
\end{proof}

By the above lemma, the normalized discrepancy has a Fourier expansion of the form
\begin{align*}
&\sum_{j=0}^h r^{-j}%
{\displaystyle\sum_{n\in\mathbb{Z}^{d}\setminus\left\{  0\right\}  }}
a_{j}\left(  n\right)  \left\vert n\right\vert ^{-(d+1)/2-j}\exp\left(  -2\pi
ig\left(  n\right)  r\right)  \exp\left(  2\pi inx\right) \\
&  +\sum_{j=0}^hr^{-j}%
{\displaystyle\sum_{n\in\mathbb{Z}^{d}\setminus\left\{  0\right\}  }}
b_{j}\left(  n\right)  \left\vert n\right\vert ^{-(d+1)/2-j}\exp\left(  2\pi
ig\left(  -n\right)  r\right)  \exp\left(  2\pi inx\right)+\ldots
\end{align*}
By replacing $(d+1)/2$ with a complex variable $z$ one obtains an analytic function of this complex 
variable with values in $L^p$ spaces. This will allow us to estimate the norm of this discrepancy via complex interpolation.

\begin{lemma}
\label{Asymptotic Discrepancy} Assume that $\Omega$\ is a convex body in
$\mathbb{R}^{d}$\ with smooth boundary with strictly positive Gaussian
curvature. Let $z$ be a complex parameter, and for every $h=0,1,2,...$ and
$r\geq1$, with the notation of the previous lemmas, let define the function 
$\Phi\left( \delta, z,r,x\right)  $ via the Fourier expansions
\begin{align*}
&  \Phi\left(  \delta,z,r,x\right)  \\
&=\sum_{j=0}^h r^{-j}%
{\displaystyle\sum_{n\in\mathbb{Z}^{d}\setminus\left\{  0\right\}  }}
\widehat\varphi(\delta n)a_{j}\left(  n\right)  \left\vert n\right\vert ^{-z-j}\exp\left(  -2\pi
ig\left(  n\right)  r\right)  \exp\left(  2\pi inx\right) \\
&  +\sum_{j=0}^hr^{-j}%
{\displaystyle\sum_{n\in\mathbb{Z}^{d}\setminus\left\{  0\right\}  }}
\widehat\varphi(\delta n)b_{j}\left(  n\right)  \left\vert n\right\vert ^{-z-j}\exp\left(  2\pi
ig\left(  -n\right)  r\right)  \exp\left(  2\pi inx\right)  .
\end{align*}
The convergence of the above series is absolute and uniform. With $z=(d+1)/2$, define
\[
\mathcal{R}_{h}\left( \delta, r,x\right)  =r^{-\left(  d-1\right)  /2}\mathcal{D}_\delta%
\left(  \Omega,r,x\right)  -%
\Phi\left( \delta, \left(  d+1\right)  /2,r,x\right)  .
\]
Then, if $h>\left(  d-3\right)  /2$ there exists $C$ such that for every $\delta>0$ and $r\geq1$,
\[
\left\vert \mathcal{R}_{h}\left( \delta, r,x\right)  \right\vert \leq Cr^{-h-1}.
\]

\end{lemma}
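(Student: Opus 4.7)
The plan is to substitute the asymptotic expansion of $\widehat\chi_\Omega$ from Lemma~\ref{Fourier Asymptotic copy(1)} term-by-term into the Fourier series of the mollified discrepancy from Remark~\ref{r1}, and to exploit the homogeneity of $g$, $a_j$, $b_j$ to extract the explicit powers of $r$ that appear in $\Phi$. The error term produced by truncating the asymptotic expansion of $\widehat\chi_\Omega(rn)$ at level $h$ will then assemble into $\mathcal R_h$, which is controlled by a convergent lattice sum provided $h>(d-3)/2$.

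First, using Remark~\ref{r1} I would write
\[
r^{-(d-1)/2}\mathcal D_\delta(\Omega,r,x)=r^{(d+1)/2}\sum_{n\neq 0}\widehat\varphi(\delta n)\,\widehat\chi_\Omega(rn)\exp(2\pi i n x).
\]
For $r\ge 1$ and $n\neq 0$ one has $|rn|\ge 1$, so Lemma~\ref{Fourier Asymptotic copy(1)} applies with an error $|E_h(\xi)|\le C|\xi|^{-(d+2h+3)/2}$ that is uniform on $|\xi|\ge 1$ (uniformity on a bounded range $1\le|\xi|\le M$ follows from boundedness of both $\widehat\chi_\Omega$ and of the main terms). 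Since $g$ is homogeneous of degree $1$, $a_j$ and $b_j$ are homogeneous of degree $0$, and $|rn|^{-(d+1)/2-j}=r^{-(d+1)/2-j}|n|^{-(d+1)/2-j}$, substituting and collecting powers of $r$ shows that the global factor $r^{(d+1)/2}\cdot r^{-(d+1)/2-j}=r^{-j}$ matches exactly the powers in the definition of $\Phi(\delta,(d+1)/2,r,x)$, so the main terms reassemble into $\Phi$.

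Second, this identifies
\[
\mathcal R_h(\delta,r,x)=r^{(d+1)/2}\sum_{n\neq 0}\widehat\varphi(\delta n)\,E_h(rn)\exp(2\pi i n x).
\]
A termwise estimate using $|\widehat\varphi(\delta n)|\le\widehat\varphi(0)=1$ gives
\[
|\mathcal R_h(\delta,r,x)|\le C\,r^{(d+1)/2}\sum_{n\neq 0}(r|n|)^{-(d+2h+3)/2}=Cr^{-h-1}\sum_{n\neq 0}|n|^{-(d+2h+3)/2}.
\]
The remaining $d$-dimensional lattice sum converges precisely when $(d+2h+3)/2>d$, i.e.\ $h>(d-3)/2$, yielding the stated bound with $C$ independent of $\delta>0$ and $r\ge 1$.

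Absolute and uniform convergence of the series defining $\Phi(\delta,z,r,x)$ at $z=(d+1)/2$ is not an obstacle: the mollifier factor $\widehat\varphi(\delta n)$ decays faster than any polynomial in $n$ for fixed $\delta>0$, while $a_j,b_j$ are bounded by homogeneity and smoothness. The only genuine technical point is therefore bookkeeping the powers of $r$ so that the error contributes exactly $r^{-h-1}$, together with the observation that the condition $h>(d-3)/2$ is a sharp lattice-convergence threshold — and this is the single place where the dimension $d$ enters the argument.
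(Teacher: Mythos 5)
Your proposal is correct, and it is exactly the computation the paper has in mind when it says the lemma "is a consequence of the previous lemmas": substitute the asymptotic expansion of $\widehat\chi_\Omega$ from Lemma~\ref{Fourier Asymptotic copy(1)} into the mollified discrepancy from Remark~\ref{r1}, use the homogeneities of $g$, $a_j$, $b_j$ to see the main terms reassemble into $\Phi(\delta,(d+1)/2,r,x)$, and estimate the remainder by $Cr^{-h-1}\sum_{n\neq0}|n|^{-(d+2h+3)/2}$, which converges precisely for $h>(d-3)/2$. You also correctly flag and dispose of the only mildly nontrivial point, namely that the $O$-term in the asymptotic expansion must be made uniform down to $|\xi|\ge1$ by absorbing the compact range into the constant.
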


\begin{proof}
This is a consequence of the previous lemmas. 
\end{proof}

\begin{lemma} \label{lm_6_integral_ellipsoid}
Let $N$ be a positive integer, and $\mu$ a positive measure with compact support in the positive real axis and with Fourier transform satisfying $|\widehat\mu(\xi)|\leq (1+|\xi|)^{-\beta}$  for some $\beta\geq0$.
Then for every $\lambda>0$ and for every $z\in\mathbb C$ there exists $C>0$ such that for every $\delta>0$ and for every $1<R<+\infty$,
\begin{align*}
& \int_{\mathbb{R}} \int_{\mathbb{T}^{d}}  |\Phi(\delta,z,r,x)|^{2N}dxd\mu(r-R)\leq C\int_{\mathbb{R}^{d}} (1+\delta|k|)^{-\lambda}\\
& \times\int\limits_{\substack{m_1,\ldots,m_{N}\in\mathbb{R}^{d}\\ |m_1|, \ldots,|m_{N}| >1\\ m_1+\dots+m_{N}=k }} (1+\delta|m_1|)^{-\lambda}\dots(1+\delta|m_N|)^{-\lambda}|m_1|^{-\mathrm{Re}(z)}\dots|m_{N}|^{-\mathrm{Re}(z)}\\
& \times\int\limits_{\substack{n_1,\ldots,n_{N}\in\mathbb{R}^{d}\\ |n_1|, \ldots,|n_{N}| >1\\ n_1+\dots+n_{N}=k }}(1+\delta|n_1|)^{-\lambda}\dots(1+\delta|n_N|)^{-\lambda} |n_1|^{-\mathrm{Re}(z)}\dots|n_{N}|^{-\mathrm{Re}(z)} \\
&\times \left(1+| g(m_1)+\dots+g(m_{N})-g(n_1)-\dots-g(n_{N})|\right)^{-\beta}\\
&\times d\sigma(n_1,\dots,n_{N}) d\sigma(m_1,\dots,m_{N}) dk.
\end{align*}
The inner integrals are with respect to the surface measure on the $(N-1)d$ dimensional variety of $N$ points with sum $k$. In other words, they are essentially Lebesgue integrals with respect to the first $N-1$ variables, replacing the last variables $ m_N$ and $n_N$ respectively with $k-m_1-\dots-m_{N-1}$
and $k-n_1-\dots-n_{N-1}$.
The above final expression is a decreasing function of $\mathrm{Re}(z)$.

\end{lemma}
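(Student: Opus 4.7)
The strategy is to expand $|\Phi|^{2N}=\Phi^{N}\overline{\Phi}^{\,N}$ as a $2N$-fold Fourier series, compute the $x$-integral by orthogonality, integrate against $d\mu(r-R)$ in order to produce a factor of $\widehat\mu$, and finally compare the resulting discrete lattice sum with the continuous integral on the right-hand side.

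First I would multiply out the $2N$ copies of the Fourier series defining $\Phi$ and integrate over $x\in\mathbb T^{d}$. Orthogonality of $\{e^{2\pi in\cdot x}\}$ collapses the $2N$-fold sum to tuples $(m_{1},\dots,m_{N},n_{1},\dots,n_{N})\in(\mathbb Z^{d}\setminus\{0\})^{2N}$ with $m_{1}+\dots+m_{N}=n_{1}+\dots+n_{N}=:k\in\mathbb Z^{d}$. At each of the $2N$ positions one may select the $a_{j}$-sum or the $b_{j}$-sum; after the substitution $n\mapsto -n$ in every $b$-sum, each of these $2^{2N}$ configurations yields an oscillation of the form $\exp(-2\pi iSr)$ with $S=\sum_{j}g(m_{j})-\sum_{j}g(n_{j})$, at the price of an overall combinatorial constant absorbed into $C$.

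Next, the integral against $d\mu(r-R)$ produces $e^{-2\pi iSR}\widehat\mu(S)$, whose modulus is at most $(1+|S|)^{-\beta}$ by hypothesis. Using $|a_{j}(n)|,|b_{j}(n)|\le C$ (they are smooth and homogeneous of degree $0$ away from the origin), $|\widehat\varphi(\delta n)|\le C_{\lambda}(1+\delta|n|)^{-\lambda}$ for every $\lambda>0$, and $r^{-j}\le 1$ on the support of $d\mu(\cdot-R)\subset[R,\infty)\subset[1,\infty)$, the terms with $j\ge 1$ carry an extra $|n|^{-j}$ and are absorbed into the $j=0$ contribution. The estimate thus reduces to a constant times
\[
\sum_{\substack{k\in\mathbb Z^{d}\\ m_{j},n_{j}\in\mathbb Z^{d}\setminus\{0\}\\ \sum m_{j}=\sum n_{j}=k}}\prod_{j=1}^{N}\frac{(1+\delta|m_{j}|)^{-\lambda}(1+\delta|n_{j}|)^{-\lambda}}{|m_{j}|^{\mathrm{Re}(z)}|n_{j}|^{\mathrm{Re}(z)}}\bigl(1+|S|\bigr)^{-\beta}.
\]

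To pass from this discrete sum to the continuous integral I would compare each lattice sum with a Lebesgue integral: the weights $(1+\delta|m|)^{-\lambda}|m|^{-\mathrm{Re}(z)}$ are slowly varying over unit cubes in $\{|m|>1\}$, so lattice summation is controlled by Lebesgue integration, and the constraint $\sum m_{j}=k$ gives rise to precisely the surface measure $d\sigma$ of the statement (namely free Lebesgue integration in $m_{1},\dots,m_{N-1}$ after setting $m_{N}=k-m_{1}-\dots-m_{N-1}$). To produce the outer factor $(1+\delta|k|)^{-\lambda}$ I would start from the $\widehat\varphi$-decay with exponent $2\lambda$ (permissible since $\lambda$ was arbitrary), split $\prod_{j}(1+\delta|m_{j}|)^{-2\lambda}$ into two products of exponent $\lambda$, and bound one of these by $(1+\delta\max_{j}|m_{j}|)^{-\lambda}$; the elementary inequality $|k|\le N\max_{j}|m_{j}|$ then gives $(1+\delta\max_{j}|m_{j}|)^{-\lambda}\le N^{\lambda}(1+\delta|k|)^{-\lambda}$. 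Monotonicity of the bound in $\mathrm{Re}(z)$ is immediate since $|m_{j}|,|n_{j}|>1$ makes each $|\cdot|^{-\mathrm{Re}(z)}$ decreasing. The principal obstacle is the bookkeeping of the sum-to-integral comparison under the linear constraint $\sum m_{j}=k$ and of the $2^{2N}$ $a$/$b$ configurations; both are fairly standard once the Fourier-analytic setup is in place.
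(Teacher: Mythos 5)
Your overall roadmap — expand the $2N$-th power, collapse the $x$-integral by orthogonality, integrate against $d\mu(r-R)$ to invoke $\widehat\mu$, and then compare the discrete lattice sum with a continuous integral — is the right one and matches the paper. Your way of manufacturing the outer cutoff $(1+\delta|k|)^{-\lambda}$ (doubling the $\widehat\varphi$-exponent and using $|k|\le N\max_j|m_j|$) is different from the paper's (which uses the product expansion $\prod(1+\delta|m_j|)\ge 1+\delta\sum|m_j|\ge 1+\delta|k|$) but equally valid. However, there are two places where the proposal glosses over points that the paper handles with care, and one of them is a genuine gap.

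\textbf{First, the $r^{-j}$ factor.} Expanding $|\Phi|^{2N}$ produces cross-terms carrying a total power $r^{-J}$ with $J=\sum j_k+\sum j_k'$, and after $x$-integration you must estimate
\[
\Bigl|\int_{\mathbb R}e^{-2\pi i S r}\,r^{-J}\,d\mu(r-R)\Bigr|.
\]
Writing ``$r^{-j}\le 1$ on the support'' does not show this has the decay $(1+|S|)^{-\beta}$: bounding a factor inside an oscillatory integral does not preserve the cancellation, and a priori the measure $r^{-J}d\mu(r-R)$ could have a very different Fourier transform than $d\mu$. This is precisely the point the paper settles: it writes $\int e^{-2\pi i\xi r}(r+R)^{-J}d\mu(r)=\int\widehat{d\mu}(\xi-s)q(s)\,ds$, where $q$ is the Fourier transform of a smooth compactly supported extension of $(r+R)^{-J}$ (smooth uniformly in $R>1$ since the singularity at $r=-R$ stays away from the support), and concludes the decay of the convolution. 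You need some argument of this kind; ``$r^{-j}\le1$'' is not a substitute.

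\textbf{Second, the $a$/$b$ mixed configurations.} You claim that after the substitution $n\mapsto-n$ in each $b$-sum, all $2^{2N}$ choices produce an oscillation $\exp(-2\pi iSr)$ with $S=\sum g(m_j)-\sum g(n_j)$, under the same constraint $\sum m_j=\sum n_j=k$. That is not so: the substitution $n\mapsto-n$ flips the sign of the $x$-frequency $e^{2\pi inx}\mapsto e^{-2\pi inx}$, so in a mixed $a/b$ product the $x$-orthogonality constraint becomes a signed combination $\sum\pm m_j=\sum\pm n_j$, and the $r$-oscillation becomes a signed combination of $g$'s that does not in general reduce to $\sum g(m_j)-\sum g(n_j)$ (note that $g(-x)\ne g(x)$ for a general $\Omega$). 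The paper avoids this obstacle entirely by treating the two parts of $\Phi$ (the $a$-type and $b$-type sums, each with a single sign in the exponential) as separate functions $\Theta_j$ and bounding each $\int\int|\Theta_j|^{2N}$ separately; the estimate for $\Phi$ then follows by the triangle inequality in $L^{2N}$. You should either adopt this splitting, or argue explicitly that the mixed-sign constraint and the signed $g$-sum still land you in a form covered by the stated right-hand side (for instance because $g$ and $g(-\cdot)$ are both support functions comparable to $|\cdot|$), which your current sketch does not do.
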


\begin{proof} The function $\Phi(\delta,z,r,x)$ is a sum of terms of the form
\[
\Theta_j\left(  \delta,z,r,x\right)  =r^{-j}%
{\displaystyle\sum_{n\in\mathbb{Z}^{d}\setminus\left\{  0\right\}  }}
\widehat\varphi(\delta n)c(n)\left\vert n\right\vert ^{-z-j}\exp\left(  \mp2\pi ig\left(   \pm n\right)
r\right)  \exp\left(  2\pi inx\right)  .
\]
The expression $\exp\left(  \mp2\pi ig\left(   \pm n\right)
r\right)$ has to be intended either as $\exp\left(  -2\pi ig\left(   n\right)
r\right)$ or $\exp\left(  +2\pi ig\left(  - n\right)
r\right)$.
Assume that the first exponential is $\exp(-2\pi ig\left(  n\right)r)$.
It follows that for every positive integer $N$,
\begin{align*}
& \left(\Theta_j(\delta,z,r,x)\right)^{N}\\
=&  r^{-jN}\sum_{k\in\mathbb{Z}^{d}} \sum_{\substack{n_1,\dots n_N\neq 0\\ n_1+\dots+n_N=k}} \widehat\varphi(\delta n_1)\dots\widehat\varphi(\delta n_N)c(n_1)\dots c(n_N) |n_1|^{-z-j}\dots |n_N|^{-z-j}\\
& \times e^{-2\pi i\left(g(n_1)+\dots+g(n_N)\right)r}e^{2\pi ikx}.
\end{align*}
For a proof, just observe that since $\widehat{\varphi}(\xi)$ has a fast decay at infinity, all series involved are absolutely convergent, and one can freely expand the $N$-th power and rearrange the terms. 
Then, by Parseval equality,
\begin{align*}
& \int_{\mathbb{T}^{d}} |\Theta_j(\delta,z,r,x)|^{2N}dx\\
& = r^{-2jN}\sum_{k\in\mathbb{Z}^{d}} \Bigg|  \sum_{\substack{n_1,\dots n_N\neq 0\\ n_1+\dots+n_N=k}} \widehat\varphi(\delta n_1)\dots\widehat\varphi(\delta n_N)c(n_1)\dots c(n_N) |n_1|^{-z-j}\dots |n_N|^{-z-j}\\
& \times e^{-2\pi i\left(g(n_1)+\dots+g(n_N)\right)r}\Bigg|^{2}.
\end{align*}
Expanding the square and integrating in the variable $r$, one obtains
\begin{align*}
&  \int_{\mathbb{R}} \int_{\mathbb{T}^{d}} |\Theta_j(\delta,z,r,x)|^{2N}dxd\mu(r-R)\\
&=\sum_{k\in\mathbb{Z}^{d}}  \sum_{\substack{m_1,\dots m_N\neq 0\\ m_1+\dots+m_N=k}} \widehat\varphi(\delta m_1)\dots\widehat\varphi(\delta m_N)c(m_1)\dots c(m_N) |m_1|^{-z-j}\dots |m_N|^{-z-j}\\
&\times  \sum_{\substack{n_1,\dots n_N\neq 0\\ n_1+\dots+n_N=k}} \widehat\varphi(\delta n_1)\dots\widehat\varphi(\delta n_N)\overline{c(n_1)}\dots \overline{c(n_N)} |n_1|^{-\overline z-j}\dots |n_N|^{-\overline z-j}\\
&  \times \int_{\mathbb{R}} e^{-2\pi i\left(g(m_1)+\dots+g(m_N)-g(n_1)-\dots-g(n_N)\right)r} r^{-2jN}d\mu(r-R).
\end{align*}
The last integral is the Fourier transform of the measure $r^{-2jN}d\mu(r-R)$ and
it is easy to see that if $R>1$, so that the singularity of $r^{-2jN}$ is far from the support of $d\mu(r-R)$, it satisfies the same estimates as the Fourier transform of $d\mu(r)$. Indeed
\begin{align*}
&\int_{\mathbb R} e^{-2\pi i\xi r}r^{-2jN}d\mu(r-R)=e^{-2\pi i\xi R}\int_{\mathbb R} e^{-2\pi i\xi r}(r+R)^{-2jN}d\mu(r)\\
&=e^{-2\pi i\xi R}\int_{\mathbb R}\widehat{d\mu}(\xi-s) q(s)ds,
\end{align*}
where $q(s)$ is the Fourier transform of a smooth extension outside the support of $d\mu$ of the function $(r+R)^{-2jN}$.
Hence,
\begin{align*}
&\left|\int_{\mathbb{R}} e^{-2\pi i\left(g(m_1)+\dots+g(m_N)-g(n_1)-\dots-g(n_N)\right)r} r^{-2jN}d\mu(r-R)\right|\\
&\leq
C\left(1+\left|g(m_1)+\dots+g(m_N)-g(n_1)-\dots-g(n_N)\right|\right)^{-\beta}.
\end{align*}
The function $\varphi(x)$ is smooth, so that $|\widehat{\varphi}(\xi)|\leq C\left(1+|\xi|\right)^{-\lambda}$  for every $\lambda>0$. 
Hence for every $j$ the above quantity is dominated up to a constant by
\begin{align*}
&\sum_{k\in\mathbb{Z}^{d}}  \sum_{\substack{m_1,\dots m_N\neq 0\\ m_1+\dots+m_N=k}} 
\left(1+|\delta m_1|\right)^{-\lambda}\dots (1+|\delta m_N|)^{-\lambda}
|m_1|^{-\mathrm{Re}(z)}\dots |m_N|^{-\mathrm{Re}(z)}\\
&\times  \sum_{\substack{n_1,\dots n_N\neq 0\\ n_1+\dots+n_N=k}} 
(1+|\delta n_1|)^{-\lambda}\dots(1+|\delta n_N|)^{-\lambda}  
|n_1|^{-\mathrm{Re}(z)}\dots |n_N|^{-\mathrm{Re}(z)}\\
&  \times\left(1+\left|g(m_1)+\dots+g(m_N)-g(n_1)-\dots-g(n_N)\right|\right)^{-\beta} .
\end{align*}
In this formula there is no cutoff in the variable $k$. In order to obtain a cutoff in $k$, observe that, if $m_1+\dots+m_N=k$, then
\begin{align*}
&  (1+\delta|m_1|)^{-\sigma}\dots(1+\delta |m_{N}|)^{-\sigma} \\
=& \left(1+\delta(|m_1|+\dots+ |m_N|) + \delta^{2}(|m_1| |m_2|+\dots)+\dots\right)^{-\sigma}\\
&\leq \left(1+\delta(|m_1|+\dots+ |m_N|)\right)^{-\sigma} \leq \left(1+\delta |m_1+\dots+m_N| \right)^{-\sigma} = \left(1+\delta |k|\right)^{-\sigma}.
\end{align*}
In particular, some of the cutoff functions $(1+\delta|m_1|)^{-\sigma}\dots (1+\delta |m_N|)^{-\sigma}$ can be replaced with $(1+\delta |k|)^{-\sigma}$. 
Finally, in the above formulas one can replace the sums with integrals. Indeed, there exist positive constants $A$ and $B$ such that for every integer point $m\neq0$ and every $x\in Q$, the cube centered at the
origin with sides parallel to the axes and of length one,
\[
A |m| \leq |m+x| \leq B |m|.
\]
This implies that the function $|m+x|^{-\mathrm{Re}(z)}$ is slowly varying in the cube $Q$. 
Moreover, also the function
$
\left(1+| g(m+x)+\dots|\right)^{-\beta}
$
is slowly varying. 
Hence, one can replace a sum over $m$ with an integral over the union of cubes $m+Q$,
\begin{align*}
&\sum_{k\in\mathbb{Z}^{d}}  \left(1+|\delta k|\right)^{-\lambda}\\
&\times\sum_{\substack{m_1,\dots m_N\neq 0\\ m_1+\dots+m_N=k}} 
\left(1+|\delta m_1|\right)^{-\lambda}\dots (1+|\delta m_N|)^{-\lambda}
|m_1|^{-\mathrm{Re}(z)}\dots |m_N|^{-\mathrm{Re}(z)}\\
&\times  \sum_{\substack{n_1,\dots n_N\neq 0\\ n_1+\dots+n_N=k}} 
(1+|\delta n_1|)^{-\lambda}\dots(1+|\delta n_N|)^{-\lambda}  
|n_1|^{-\mathrm{Re}(z)}\dots |n_N|^{-\mathrm{Re}(z)}\\
&  \times\left(1+\left|g(m_1)+\dots+g(m_N)-g(n_1)-\dots-g(n_N)\right|\right)^{-\beta} 
\\
&\leq C\int_{\mathbb{R}^{d}} (1+\delta|k|)^{-\lambda}\\
&\times\int\limits_{\substack{m_1,\ldots,m_{N}\in\mathbb{R}^{d}\\ |m_1|, \ldots,|m_{N}| >1/2\\ m_1+\dots+m_{N}=k }} (1+\delta|m_1|)^{-\lambda}\dots(1+\delta|m_N|)^{-\lambda}|m_1|^{-\mathrm{Re}(z)}\dots|m_{N}|^{-\mathrm{Re}(z)}\\
& \times\int\limits_{\substack{n_1,\ldots,n_{N}\in\mathbb{R}^{d}\\ |n_1|, \ldots,|n_{N}| >1/2\\ n_1+\dots+n_{N}=k }}(1+\delta|n_1|)^{-\lambda}\dots(1+\delta|n_N|)^{-\lambda} |n_1|^{-\mathrm{Re}(z)}\dots|n_{N}|^{-\mathrm{Re}(z)} \\
&\times \left(1+| g(m_1)+\dots+g(m_{N})-g(n_1)-\dots-g(n_{N})|\right)^{-\beta}\\
&\times d\sigma(n_1,\dots,n_{N}) d\sigma(m_1,\dots,m_{N}) dk.
\end{align*}
Finally, with a change of variables one can transform the domain of integration $\left\{|x|>1/2\right\}$ into $\left\{|y|>1\right\}$, and the thesis follows immediately. 
Indeed, if $|x|>1$ then $|x|^{-\mathrm{Re}(z)}$ decreases as $\mathrm{Re}(z)$ increases.
\end{proof}

\section{The case of a generic convex set with non vanishing curvature}
 This section contains the proof of Theorems \ref{thm_d=2} and \ref{thm_d>2}.  
 
\begin{lemma}
\label{Support} Let $g\left(  x\right)  =\sup_{y\in\Omega}\left\{  x\cdot
y\right\}  $ be the support function of a convex set $\Omega$ which contains the
origin, and with a smooth boundary with strictly positive Gaussian curvature.

(1) The support function is convex, homogeneous of degree one, positive and
smooth away from the origin, and it is equivalent to the Euclidean norm, that
is there exist $0<A<B$ such that for every $x$,
\[
A\left\vert x\right\vert \leq g\left(  x\right)  \leq B\left\vert x\right\vert
.
\]

(2) For every $x\in\mathbb R^d\setminus\{0\}$ there exist a unique point $z(x)\in\partial\Omega$
such that $x\cdot z(x)=g(x)$. In particular, $z(x)$ is the unique point in $\partial\Omega$ with outer normal $x$.
Furthermore $z(x)$ is homogeneous of degree $0$ in the variable $x$.

(3) There exist two positive constants  $c$ and $C$ such that for every $\vartheta,\omega$ with $\left|\vartheta\right|=\left|\omega\right|=1$
\[
c\left(1-\omega\cdot\vartheta\right)\le z(\vartheta)\cdot\vartheta-z\left(\omega\right)\cdot\vartheta\le C\left(1-\omega\cdot\vartheta\right)
\]

\end{lemma}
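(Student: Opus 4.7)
The plan is to handle parts (1) and (2) by standard properties of the support function and the Gauss map, and to prove (3) by a second-order Taylor expansion of $g$ at a point of $S^{d-1}$.

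For part (1), convexity and homogeneity of degree one follow immediately from the definition of $g$ as a supremum of linear functionals. Since $0$ lies in the interior of $\Omega$, some ball $B_r\subset\Omega$ gives $g(x)\ge r|x|$, while compactness of $\Omega$ gives $g(x)\le(\max_{y\in\Omega}|y|)\,|x|$; these yield the two-sided bound. Smoothness of $g$ away from the origin is classical: when $\partial\Omega$ is smooth with strictly positive Gaussian curvature the Gauss map $N:\partial\Omega\to S^{d-1}$ is a smooth diffeomorphism, and $g(\xi)=\xi\cdot N^{-1}(\xi/|\xi|)$. For part (2), existence of the maximizer $z(x)$ is by compactness, and $z(x)\in\partial\Omega$ because a nonconstant linear functional cannot attain its maximum at an interior point of $\Omega$. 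Uniqueness and the characterization of $z(x)$ as the unique point of $\partial\Omega$ with outer normal $x/|x|$ both follow from the bijectivity of the Gauss map, equivalently from the strict convexity of $\Omega$; and $z(\lambda x)=z(x)$ for $\lambda>0$ gives homogeneity of degree zero.

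The substantive part is (3). Setting $\Phi(\vartheta,\omega)=z(\vartheta)\cdot\vartheta-z(\omega)\cdot\vartheta=g(\vartheta)-z(\omega)\cdot\vartheta$ and using the identities $\nabla g(\xi)=z(\xi)$ and $g(\omega)=z(\omega)\cdot\omega$, I would rewrite
\[
\Phi=g(\vartheta)-g(\omega)+z(\omega)\cdot(\omega-\vartheta),
\]
and then Taylor expand both $g(\omega)$ and $z(\omega)=\nabla g(\omega)$ around $\vartheta$. The linear terms cancel, leaving
\[
\Phi=\tfrac12(\omega-\vartheta)^{T}H(\omega-\vartheta)+O(|\omega-\vartheta|^{3}),\qquad H=\nabla^{2}g(\vartheta).
\]
Since $g$ is homogeneous of degree one, Euler's identity gives $H\vartheta=0$, so only the tangential component $\omega-(\omega\cdot\vartheta)\vartheta$ of $\omega-\vartheta$ contributes, and its squared length equals $1-(\omega\cdot\vartheta)^{2}$, comparable to $1-\omega\cdot\vartheta$ when $\omega\cdot\vartheta$ is near $1$. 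On the tangent space $T_{\vartheta}S^{d-1}$, the operator $H=Dz(\vartheta)$ is the differential of the inverse Gauss map, and its eigenvalues are the reciprocals of the principal curvatures of $\partial\Omega$ at $z(\vartheta)$, hence uniformly bounded above and below by the strictly positive curvature hypothesis. This gives the two-sided bound in a small neighborhood of the diagonal $\{\omega=\vartheta\}$, after absorbing the cubic remainder into the quadratic term. For $\omega\cdot\vartheta$ outside this neighborhood, $1-\omega\cdot\vartheta$ is bounded both above and below, and $\Phi$ is continuous and strictly positive on the corresponding compact subset of $S^{d-1}\times S^{d-1}$ (strict positivity being another consequence of the uniqueness in part (2)), so a compactness argument finishes the estimate.

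The main obstacle will be justifying cleanly that $H$ restricted to $T_{\vartheta}S^{d-1}$ coincides with the inverse Weingarten map of $\partial\Omega$ at $z(\vartheta)$, so that the strictly positive Gaussian curvature hypothesis translates into a uniform positive lower bound for the quadratic form; this rests on the relations $\nabla g=z$, $H=Dz$, and on the fact that $T_{\vartheta}S^{d-1}=T_{z(\vartheta)}\partial\Omega$ because $\vartheta$ is the common outer normal at both points. Once this is in hand, the remaining steps are routine Taylor-remainder bookkeeping and a compactness step to pass from local to global estimates on $S^{d-1}\times S^{d-1}$.
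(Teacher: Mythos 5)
Your proof is correct, but it follows a genuinely different route from the paper's. The paper disposes of (3) by a rolling-ball comparison: since $\partial\Omega$ is compact, smooth, and of strictly positive curvature, there are radii $c\le C$ such that at every boundary point $\Omega$ contains a tangent ball of radius $c$ and is contained in a tangent ball of radius $C$. Writing the claimed inequality as $c(\vartheta-\omega)\cdot\vartheta\le(z(\vartheta)-z(\omega))\cdot\vartheta\le C(\vartheta-\omega)\cdot\vartheta$ and observing that the quantity in the middle is exactly the quantity one would compute for the inscribed/circumscribed ball centered appropriately, the estimate follows by direct geometric comparison with the spherical case. Your argument instead Taylor-expands $g$ and $\nabla g$ around $\vartheta$, uses $\nabla^{2}g(\vartheta)\vartheta=0$ (Euler) to isolate the tangential component $v=\omega-(\omega\cdot\vartheta)\vartheta$, identifies $\nabla^{2}g(\vartheta)|_{T_\vartheta S^{d-1}}$ with the inverse Weingarten map at $z(\vartheta)$ (so its eigenvalues are the principal radii of curvature, uniformly bounded above and below), and then treats the region away from the diagonal by compactness and continuity. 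The paper's argument is shorter and sidesteps any discussion of the Hessian of $g$; yours is more quantitative in spirit and makes explicit how the strict-positivity-of-curvature hypothesis enters through the spectrum of $\nabla^{2}g$. The "obstacle" you flag — the identification of $\nabla^{2}g$ with the inverse Weingarten map — is a standard fact in convex geometry (e.g.\ Schneider's book, cited in the paper as \cite{Schneider}), so your proof is complete once that reference is supplied. One small point worth making explicit in your write-up: when you pass from $|v|^{2}=1-(\omega\cdot\vartheta)^{2}$ to $1-\omega\cdot\vartheta$, you need $1+\omega\cdot\vartheta$ bounded below, which holds only on the local neighborhood of the diagonal you restrict to; the complementary set is then handled by your compactness step, as you indicate.
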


\begin{proof}
(1) is trivial and (2) follows from the smoothness and positive curvature of the boundary of $\Omega$.
The estimate in (3) can be written as 
\[
c\left(\vartheta-\omega\right)\cdot\vartheta\le (z(\vartheta)-z\left(\omega\right))\cdot\vartheta\le C\left(\vartheta-\omega\right)\cdot\vartheta.
\]
A compactness argument implies that there exist two radii $c\leq C$ such that
at every point of $\partial\Omega$, $\Omega$ contains the ball with radius $c$
tangent to $\partial\Omega$ and is contained in the ball with radius $C$ tangent to $\partial\Omega$, and (3) follows.

\end{proof}

\begin{lemma}
In the hypotheses of the above lemma, assume that $\beta\geq
0$, $\delta,\gamma>0$  and $Y\geq1$.  Set
\begin{equation*}
\tau=\tau(\gamma,\beta)   =\min\left\{  1,\gamma,\beta\right\} \qquad
\sigma =\sigma(\gamma,\beta)  =\left\{
\begin{array}
[c]{ll}%
1 & \text{if }\beta=1\text{ and }\gamma\geq1,\\
1 & \text{if }\beta=\gamma\leq1,\\
0 & \text{else.}%
\end{array}
\right.
\end{equation*}
Then, for every $\vartheta$ with
$\left\vert \vartheta\right\vert =1$ and every $k\neq0$, if we call $\Delta=g\left(  \vartheta\right)  -\nabla g\left(  k\right)  \cdot\vartheta
$,
\begin{equation*}
 \int_{0}^{\delta Y}\rho^{\gamma-1}\left(  1+\left\vert g\left(
\rho\vartheta\right)  +g\left(  k-\rho\vartheta\right)  -Y\right\vert \right)
^{-\beta}d\rho
  \leq CY^{\gamma}\left(  1+Y\Delta\right)  ^{-\tau}\log^{\sigma}\left(
2+Y\Delta\right).
\end{equation*}

\end{lemma}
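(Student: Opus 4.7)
The plan is to exploit the convexity of the support function $g$ to analyze the monotonicity of the phase $\phi(\rho):=\rho g(\vartheta)+g(k-\rho\vartheta)-Y$. A direct computation gives $\phi'(\rho)=g(\vartheta)-\nabla g(k-\rho\vartheta)\cdot\vartheta$; since $\nabla g(\xi)=z(\xi/|\xi|)$ is the supporting point of $\partial\Omega$ in the direction $\xi$, Lemma \ref{Support}(3) applied with $\omega=(k-\rho\vartheta)/|k-\rho\vartheta|$ yields $\phi'(\rho)\ge 0$, and in particular $\phi'(0)=\Delta$. The convexity of $g$ makes $\rho\mapsto g(k-\rho\vartheta)$ convex, so $\phi''\ge 0$ and therefore $\phi'$ is non-decreasing, giving $\phi'(\rho)\ge\Delta$ on $[0,\delta Y]$; the $0$-homogeneity of $\nabla g$ supplies the uniform upper bound $\phi'(\rho)\le C$. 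Hence $\phi$ is a strictly monotone map of $[0,\delta Y]$ onto $[\phi(0),\phi(\delta Y)]$ with slope in $[\Delta,C]$.

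I would then split the estimate into two regimes. When $Y\Delta\le 1$, the factor $(1+Y\Delta)^{-\tau}$ is bounded below by a constant and it suffices to use $(1+|\phi|)^{-\beta}\le 1$, reducing the claim to the trivial bound $\int_0^{\delta Y}\rho^{\gamma-1}\,d\rho\le CY^\gamma$. When $Y\Delta>1$, I would perform the change of variable $u=\phi(\rho)$. The bounds $\Delta\le\phi'\le C$ yield both $d\rho\le du/\Delta$ and the two-sided estimate $(u-\phi(0))/C\le\rho\le(u-\phi(0))/\Delta$, allowing the upper bound $\rho^{\gamma-1}\le C\,\Delta^{-(\gamma-1)}(u-\phi(0))^{\gamma-1}$ when $\gamma\ge 1$ (and an analogous bound with no $\Delta$ factor when $\gamma\le 1$). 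The integral is thereby reduced to a one-dimensional expression of the form
\[
\frac{C}{\Delta^{\max(1,\gamma)}}\int_{\phi(0)}^{\phi(\delta Y)}(u-\phi(0))^{\gamma-1}(1+|u|)^{-\beta}\,du.
\]

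The resulting one-dimensional integral is then estimated by elementary means, tracking the three competing sources of decay in $Y\Delta$: the Jacobian factor $\Delta^{-1}$, the power $\rho^{\gamma-1}$, and the tail weight $(1+|u|)^{-\beta}$. Since the interval of integration has length at most $C\delta Y$ so that $|u-\phi(0)|\le CY$, the net decay in $(1+Y\Delta)$ turns out to be exactly $\min(1,\gamma,\beta)$, which is the exponent $\tau$ claimed. A logarithm appears in precisely the critical cases where two of the competing rates meet at the threshold of integrability: the case $\beta=1$ and $\gamma\ge 1$ produces $\log(2+Y\Delta)$ from integrating $(1+|u|)^{-1}$ over a length-$Y$ interval, while the case $\beta=\gamma\le 1$ combines the contribution near the endpoint $u=\phi(0)$ with the tail to produce a logarithm; in all other configurations one of the integrals converges and no logarithm arises, accounting exactly for the definition of $\sigma$.

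The main technical obstacle will be handling the position of $\phi(0)=g(k)-Y$ relative to the origin, since the weight $(1+|u|)^{-\beta}$ peaks at $u=0$, which may or may not lie in $[\phi(0),\phi(\delta Y)]$. To obtain the sharp bound, one subdivides this interval into the parts close to $u=0$ and far from it, treats each piece by standard one-dimensional estimates, and verifies that the logarithmic factors emerge only in the two critical configurations singled out by the definition of $\sigma$. Once this bookkeeping is done, the claimed inequality follows at once.
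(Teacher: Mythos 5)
Your phase analysis (monotonicity of $\phi$, $\phi'(0)=\Delta$, $\phi$ convex, $\phi'$ bounded) matches the paper, and the split $Y\Delta\le1$ / $Y\Delta>1$ is sensible. However, the change of variable $u=\phi(\rho)$ is a genuine departure from the paper and it introduces a loss when $\gamma<1$ that invalidates the claimed reduction. Your prefactor is $\Delta^{-\max(1,\gamma)}=\Delta^{-1}$ for $\gamma<1$, whereas the statement requires decay corresponding to $\Delta^{-\gamma}$. Concretely, take $\gamma<1<\beta$, so $\tau=\gamma$, and pick $k$ and $Y$ with $\phi(0)=g(k)-Y=0$ and $\vartheta$ close to $k/|k|$ so that $\Delta$ is small but $Y\Delta>1$. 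The target bound is then comparable to $Y^\gamma(Y\Delta)^{-\gamma}=\Delta^{-\gamma}$. Your one-dimensional integral $\int_0^L v^{\gamma-1}(1+v)^{-\beta}\,dv$ (with $L\gtrsim\delta$) is bounded \emph{below} by a positive constant, so your reduction only yields $C\Delta^{-1}$, which is larger than $\Delta^{-\gamma}$ by the unbounded factor $\Delta^{\gamma-1}$. The source of the loss is the estimate $\rho^{\gamma-1}\le C(u-\phi(0))^{\gamma-1}$ obtained from the lower bound $\rho\ge(u-\phi(0))/C$: near the singularity $\rho=0$ one actually has $\phi'\approx\Delta$ and $\rho\approx(u-\phi(0))/\Delta$, so $\rho^{\gamma-1}$ is smaller than your bound by a factor of order $\Delta^{1-\gamma}$, and it is precisely this region that controls the integral. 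This case $\gamma<1$ is genuinely needed in the applications (Lemma \ref{integral} invokes this lemma with $\gamma=d-\alpha$, which is $<1$ when $d=2$ and also when $\alpha>d-1$).

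The paper avoids the issue by never changing variables: it uses convexity to obtain the pointwise lower bound $|F(\rho)-Y|\ge F'(0)\,|\rho-r|=\Delta|\rho-r|$ (with $r=F^{-1}(Y)$ if $Y\ge F(0)$ and $r=0$ otherwise, so $0\le r\le CY$), then performs the \emph{exact} linear substitution $t=\Delta\rho$, yielding $\Delta^{-\gamma}\int_0^{\delta\Delta Y}t^{\gamma-1}(1+|t-\Delta r|)^{-\beta}\,dt$ with the correct prefactor $\Delta^{-\gamma}$, followed by a case analysis in $T=\delta\Delta Y$ and $S=\Delta r$. To salvage your change-of-variable route you would have to split the $u$-integral near and away from $u=\phi(0)$ and use $\rho\approx(u-\phi(0))/\Delta$ in the near region; it is cleaner to follow the paper's pointwise-lower-bound-plus-linear-substitution argument.
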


\begin{remark}
Here and in the rest of the paper we will use the Kronecker delta notation
\[
\delta_{x,y}=\begin{cases}
1& \text{ if } x=y,\\
0 & \text{ if } x\neq y.
\end{cases}
\]
\end{remark}

\begin{proof}
The case $\beta=0$ is trivial. When $\beta>0$, then $\left(  1+Y\Delta\right)  ^{-\tau}\log^{\sigma}\left(
2+Y\Delta\right)$ is the gain with respect to the trivial estimate 
\begin{equation*}
 \int_{0}^{\delta Y}\rho^{\gamma-1}d\rho
  = C Y^{\gamma}.
\end{equation*}
Fix $\vartheta$ and $k$ and define%
\begin{align*}
F\left(  \rho\right)   &  =g\left(  \rho\vartheta\right)  +g\left(
k-\rho\vartheta\right)  ,\\
F^{\prime}\left(  \rho\right)   &  =g\left(  \vartheta\right)  -\nabla
g\left(  k-\rho\vartheta\right)  \cdot\vartheta,\\
F^{\prime\prime}\left(  \rho\right)   &  =\vartheta^{t}\cdot\nabla^{2}g\left(
k-\rho\vartheta\right)  \cdot\vartheta.
\end{align*}

The quadratic form $\vartheta^{t}\cdot\nabla^{2}g\left(  x\right)
\cdot\vartheta$ is positive semidefinite, with a $0$ eigenvalue in the
direction given by $\nabla g\left(  x\right)  $, and strictly positive
eigenvalues in the other directions. See \cite{Schneider}. Let $\lambda>0$ be the minimum
of all the $d-1$ positive eigenvalues of $\vartheta^{t}\cdot\nabla^{2}g\left(
x\right)  \cdot\vartheta$ over the sphere $\left\{  \left\vert x\right\vert
=1\right\}  $. Splitting $\vartheta=\vartheta_{0}+\vartheta_{1}$ with
$\vartheta_{0}$ parallel to $\nabla g\left(  k-\rho\vartheta\right)  $ and
$\vartheta_{1}$ orthogonal to $\nabla g\left(  k-\rho\vartheta\right)  $, and
since $\nabla^{2}g\left(  x\right)  $ is homogeneous of degree $-1$, we
therefore have
\begin{align*}
&  \vartheta^{t}\cdot\nabla^{2}g\left(  k-\rho\vartheta\right)  \cdot
\vartheta\\
&  =\vartheta_{1}^{t}\cdot\nabla^{2}g\left(  k-\rho\vartheta\right)
\cdot\vartheta_{1}\\
&  =\frac{1}{\left\vert k-\rho\vartheta\right\vert }\vartheta_{1}^{t}%
\cdot\nabla^{2}g\left(  \frac{k-\rho\vartheta}{\left\vert k-\rho
\vartheta\right\vert }\right)  \cdot\vartheta_{1}\\
&  \geq\frac{\lambda\left\vert \vartheta_{1}\right\vert ^{2}}{\left\vert
k-\rho\vartheta\right\vert }.
\end{align*}

In particular, $F^{\prime\prime}\left(  \rho\right)  \geq0$, so that
$F^{\prime}\left(  \rho\right)  $ is increasing and if $\rho\geq0$,
\[
F^{\prime}\left(  \rho\right)  \geq F^{\prime}\left(  0\right)  =g\left(
\vartheta\right)  -\nabla g\left(  k\right)  \cdot\vartheta.
\]

If $z\left(  x\right)  $ is the unique point in the boundary of $\Omega$ such
that $x\cdot z\left(  x\right)  =g\left(  x\right) $, as described in Lemma \ref{Support}, then
\[
\nabla g\left(  x\right)  =\nabla\left(  z\left(  x\right)  \cdot x\right)
=\nabla z\left(  x\right)  \cdot x+z\left(  x\right)  =z\left(  x\right)  .
\]

The last equality follows from Euler's formula, since $z\left(  x\right)  $ is
homogeneous of degree $0$. Hence,%
\[
F^{\prime}\left(  0\right)  =g\left(  \vartheta\right)  -\nabla g\left(
k\right)  \cdot\vartheta=g\left(  \vartheta\right)  -z\left(  k\right)
\cdot\vartheta\geq0.
\]

This follows by the definition of $g\left(  \vartheta\right)  $ as the maximum
of $y\cdot\vartheta$ for $y\in\Omega$.

Thus, $F\left(  \rho\right)  $ is an increasing function. If $Y\geq F\left(
0\right)  $ define $r=F^{-1}\left(  Y\right)  $, and if $Y<F\left(  0\right)
$ define $r=0$. Observe that for $r>0$ one has $Y=F\left(  r\right)  \geq
g\left(  r\vartheta\right)  \geq Cr$. Thus, in any case, $0\leq r\leq CY$.
Then
\[
\left\vert F\left(  \rho\right)  -Y\right\vert \geq F^{\prime}\left(
0\right)  \left\vert \rho-r\right\vert =\left(  g\left(  \vartheta\right)
-\nabla g\left(  k\right)  \cdot\vartheta\right)  \left\vert \rho-r\right\vert
=\Delta\left\vert \rho-r\right\vert .
\]

Hence,%
\begin{align*}
&  \int_{0}^{\delta Y}\rho^{\gamma-1}\left(  1+\left\vert g\left(
\rho\vartheta\right)  +g\left(  k-\rho\vartheta\right)  -Y\right\vert \right)
^{-\beta}d\rho\\
&  \leq C\int_{0}^{\delta Y}\rho^{\gamma-1}\left(  1+\Delta\left\vert
\rho-r\right\vert \right)  ^{-\beta}d\rho\\
&  =C\Delta^{-\gamma}\int_{0}^{\delta\Delta Y}t^{\gamma-1}\left(  1+\left\vert
t-\Delta r\right\vert \right)  ^{-\beta}dt.
\end{align*}

Call $\delta\Delta Y=T$ and $\Delta r=S$. Then we need to show that%
\[
\int_{0}^{T}t^{\gamma-1}\left(  1+\left\vert t-S\right\vert \right)  ^{-\beta
}dt \le CT^{\gamma-\tau}\log^{\sigma}\left(T\right).
\]

In any case we have%
\[
\int_{0}^{T}t^{\gamma-1}\left(  1+\left\vert t-S\right\vert \right)  ^{-\beta
}dt\leq\int_{0}^{T}t^{\gamma-1}dt\leq CT^{\gamma}.
\]
In particular, the thesis follows when $T\le 4$. 

If $T\geq4$ and $0\leq S\leq2$ then
\begin{align*}
&\int_{0}^{T}t^{\gamma-1}\left(  1+\left\vert t-S\right\vert \right)  ^{-\beta
}dt\\
&\leq C\left(  \int_{0}^{1}t^{\gamma-1}dt+\int_{1}^{T}t^{\gamma-\beta
-1}dt\right)\\
&  \leq T^{\gamma-\min\{\gamma,\beta\}}\log^{\delta_{\gamma,\beta}}(T).
\end{align*}

If $T\geq4$ and $2\leq S\leq T/2$ then%
\begin{align*}
&  \int_{0}^{T}t^{\gamma-1}\left(  1+\left\vert t-S\right\vert \right)
^{-\beta}dt\\
&  \leq\left(  \int_{0}^{S/2}+\int_{S/2}^{2S}+\int_{2S}^{T}\right)
t^{\gamma-1}\left(  1+\left\vert t-S\right\vert \right)  ^{-\beta}dt\\
&  \leq CS^{\gamma-\beta}+
S^{\gamma-\min\{1,\beta\}}\log^{\delta_{1,\beta}}(S)
  +CT^{\gamma-\min\{\gamma,\beta\}}\log^{\delta_{\gamma,\beta}}(T) \\
&
\leq
CT^{\gamma-\tau}\log^{\sigma}\left(T\right).
\end{align*}

If $T\geq4$ and $T/2\leq S\leq2T$, then%
\begin{align*}
&  \int_{0}^{T}t^{\gamma-1}\left(  1+\left\vert t-S\right\vert \right)
^{-\beta}dt\\
&  \leq CT^{-\beta}\int_{0}^{S/2}t^{\gamma-1}dt+CT^{\gamma-1}\int_{S/2}%
^{T}\left(  1+\left\vert t-S\right\vert \right)  ^{-\beta}dt\\
&  \leq CT^{\gamma-\beta}+
CT^{\gamma-\min\{1,\beta\}}\log^{\delta_{1,\beta}}(T) \\
&  \leq CT^{\gamma-\min\{1,\beta\}}\log^{\delta_{1,\beta}}(T).
\end{align*}

If $T\geq4$ and $S\geq2T$, then%
\[
\int_{0}^{T}t^{\gamma-1}\left(  1+\left\vert t-S\right\vert \right)  ^{-\beta
}dt\leq\int_{0}^{T}t^{\gamma-1}\left(  1+\left\vert t-T\right\vert \right)
^{-\beta}dt\leq
C
T^{\gamma-\min\{1,\beta\}}\log^{\delta_{1,\beta}}(T).
\]
\end{proof}

\begin{lemma}\label{lm_mu}
In the hypotheses of the previous lemma and with 
\[
\varsigma=\varsigma(d,\gamma,\beta)=\left\{
\begin{array}
[c]{ll}%
1 & \text{if }\tau=\left(  d-1\right)  /2\\
0 & \text{else,}%
\end{array}
\right.
\]
one has
\begin{align*}
&  \int_{\left\vert \vartheta\right\vert =1}\int_{0}^{\delta Y}\rho^{\gamma
-1}\left(  1+\left\vert g\left(  \rho\vartheta\right)  +g\left(
k-\rho\vartheta\right)  -Y\right\vert \right)  ^{-\beta}d\rho d\vartheta\\
&  \leq CY^{\gamma-\min\{  \tau,\left(  d-1\right)  /2\}  }%
\log^{\sigma+\varsigma}\left(  2+Y\right).
\end{align*}

\end{lemma}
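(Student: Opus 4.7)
The plan is to apply the previous lemma pointwise in $\vartheta$ and then integrate over the unit sphere, where the only geometric input needed is that the quantity
$$\Delta(\vartheta,k)=g(\vartheta)-\nabla g(k)\cdot\vartheta$$
is comparable to $1-\omega\cdot\vartheta$ with $\omega=k/|k|$. Indeed, since $g(\vartheta)=z(\vartheta)\cdot\vartheta$ by definition, $\nabla g(x)=z(x)$ by Euler's formula (as already noted in the proof of the previous lemma), and $z$ is homogeneous of degree $0$, one has $\nabla g(k)=z(\omega)$, so
$$\Delta(\vartheta,k)=\bigl(z(\vartheta)-z(\omega)\bigr)\cdot\vartheta,$$
and Lemma~\ref{Support}(3) gives $c(1-\omega\cdot\vartheta)\le\Delta(\vartheta,k)\le C(1-\omega\cdot\vartheta)$.

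By the previous lemma, the double integral is bounded by
$$CY^{\gamma}\int_{|\vartheta|=1}\bigl(1+Y\Delta(\vartheta,k)\bigr)^{-\tau}\log^{\sigma}\bigl(2+Y\Delta(\vartheta,k)\bigr)\,d\vartheta.$$
After a rotation we may assume $\omega=e_{1}$; parametrizing $\vartheta$ on the unit sphere by the colatitude $\psi\in[0,\pi]$, one has $1-\omega\cdot\vartheta=1-\cos\psi\asymp\psi^{2}$ for $\psi\in[0,\pi/2]$, and the surface measure element is comparable to $\psi^{d-2}\,d\psi$. The contribution from $\psi\in[\pi/2,\pi]$ is bounded by a constant times $(1+Y)^{-\tau}\log^{\sigma}(2+Y)$, which is dominated by the claimed estimate, so it suffices to analyze
$$J(Y)=\int_{0}^{\pi/2}(1+Y\psi^{2})^{-\tau}\log^{\sigma}(2+Y\psi^{2})\,\psi^{d-2}\,d\psi.$$

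Substituting $u=Y^{1/2}\psi$,
$$J(Y)=Y^{-(d-1)/2}\int_{0}^{(\pi/2)Y^{1/2}}(1+u^{2})^{-\tau}\log^{\sigma}(2+u^{2})\,u^{d-2}\,du.$$
The contribution from $u\le 1$ is $O(1)$, and for $u\ge 1$ the integrand is comparable to $u^{d-2-2\tau}\log^{\sigma}u$. Three cases arise according to the sign of $d-1-2\tau$:
\begin{itemize}
\item If $\tau>(d-1)/2$, the tail converges and $J(Y)\le CY^{-(d-1)/2}$, giving $Y^{\gamma}J(Y)\le CY^{\gamma-(d-1)/2}$.
\item If $\tau<(d-1)/2$, the tail is $\asymp Y^{(d-1)/2-\tau}\log^{\sigma}Y$, giving $Y^{\gamma}J(Y)\le CY^{\gamma-\tau}\log^{\sigma}Y$.
\item If $\tau=(d-1)/2$, one picks up an extra logarithm, and $Y^{\gamma}J(Y)\le CY^{\gamma-(d-1)/2}\log^{\sigma+1}Y$.
\end{itemize}
In every case the bound is $CY^{\gamma-\min\{\tau,(d-1)/2\}}\log^{\sigma+\varsigma}(2+Y)$ with $\varsigma$ as defined, which proves the lemma. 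For $Y$ of order $1$ one may instead use the trivial bound $Y^{\gamma}$ from integrating $\rho^{\gamma-1}$, which is consistent with the stated estimate.

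No step is genuinely delicate here; the only minor care required is bookkeeping the logarithmic factor in the borderline case $\tau=(d-1)/2$, which is precisely what the parameter $\varsigma$ encodes.
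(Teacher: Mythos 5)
Your proof is correct and follows essentially the same route as the paper: apply the previous lemma pointwise, use Lemma \ref{Support}(3) together with the homogeneity of $\nabla g$ to replace $\Delta$ by $1-\omega\cdot\vartheta$, pass to the colatitude variable, and split according to where $Y\psi^2$ is of order one. The only cosmetic difference is that you substitute $u=Y^{1/2}\psi$ and case-split on the sign of $d-1-2\tau$, whereas the paper splits the $\psi$-integral at $Y^{-1/2}$; these are equivalent bookkeeping.
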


\begin{proof}
Fix $k$. By the previous lemma and with the notation $\omega=k/\left\vert
k\right\vert $ and $\Delta\left(  \vartheta,\omega\right)  =g\left(
\vartheta\right)  -\nabla g\left(  \omega\right)  \cdot\vartheta$, we have to
estimate 
\begin{align*}
&  \int_{\left\vert \vartheta\right\vert =1}\int_{0}^{\delta Y}\rho^{\gamma
-1}\left(  1+\left\vert g\left(  \rho\vartheta\right)  +g\left(
k-\rho\vartheta\right)  -Y\right\vert \right)  ^{-\beta}d\rho d\vartheta\\
& \leq C Y^{\gamma}\int_{ \left\vert \vartheta\right\vert =1}\left(  1+Y\Delta\left(  \vartheta,\omega\right)  \right)  ^{-\tau}%
\log^{\sigma}\left(  2+Y\Delta\left(  \vartheta,\omega\right)  \right)
d\vartheta.
\end{align*}
By Lemma \ref{Support}, since $\Omega$ has everywhere positive curvature,
\[
c\left(  1-\vartheta\cdot\omega\right)  \leq\Delta\left(  \vartheta
,\omega\right)  \leq C\left(  1-\vartheta\cdot\omega\right)  .
\]%
Therefore,
\begin{align*}
&  Y^{\gamma}\int_{ \left\vert \vartheta\right\vert =1
}\left(  1+Y\Delta\left(  \vartheta,\omega\right)  \right)  ^{-\tau}%
\log^{\sigma}\left(  2+Y\Delta\left(  \vartheta,\omega\right)  \right)
d\vartheta\\
&  \leq CY^{\gamma}\int_{  \left\vert \vartheta\right\vert =1
}\left(  1+Y\left(  1-\vartheta\cdot\omega\right)  \right)  ^{-\tau}%
\log^{\sigma}\left(  2+Y\left(  1-\vartheta\cdot\omega\right)  \right)
d\vartheta\\
&  \leq CY^{\gamma}\int_{0}^{\pi}\left(  1+Y\left(  1-\cos\varphi\right)
\right)  ^{-\tau}\log^{\sigma}\left(  2+Y\left(  1-\cos\varphi\right)
\right)  \sin^{d-2}\varphi d\varphi\\
&  \leq CY^{\gamma}\int_{0}^{\pi}\left(  1+Y\varphi^{2}\right)  ^{-\tau}%
\log^{\sigma}\left(  2+Y\varphi^{2}\right)  \varphi^{d-2}d\varphi\\
&  \leq CY^{\gamma}\int_{0}^{Y^{-1/2}}\varphi^{d-2}d\varphi+CY^{\gamma-\tau
}\int_{Y^{-1/2}}^{\pi}\varphi^{d-2-2\tau}\log^{\sigma}\left(  2+Y\varphi
^{2}\right)  d\varphi\\
&  \leq CY^{\gamma-\left(  d-1\right)  /2}+
CY^{\gamma-\min\{\tau,(d-1)/2\}}\log^{\sigma+\varsigma}\left(  2+Y\right)\\
&  \leq CY^{\gamma-\min\{  \tau,\left(  d-1\right)  /2\} }%
\log^{\sigma+\varsigma}\left(  2+Y\right).
\end{align*}
\end{proof}

\begin{lemma}
\label{integral}
Let $g\left(  x\right)  $ be the
support function of $\Omega$, and let $d\geq2$, $d/2<\alpha<d$, $\beta\geq0$, $\zeta=\zeta(d,\alpha,\beta)=\min\left\{  1,\beta,d-\alpha,\left(
d-1\right)  /2\right\}$. Finally, define $\eta=\eta(d,\alpha,\beta)$ as follows.

If $d=2$ define
\[
\eta=\left\{
\begin{array}
[c]{ll}%
2 & \text{if }\beta=1/2\text{ and }\alpha=3/2\text{,}\\
1 & \text{if }\beta=1/2\text{ and }1<\alpha<3/2\text{,}\\
1 & \text{if }0<\beta<1/2\text{ and }\alpha=2-\beta\text{,}\\
1 & \text{if }\beta>1/2\text{ and }\alpha=3/2\text{,}\\
0 & \text{else.}%
\end{array}
\right.
\]

If $d=3$ define%
\[
\eta=\left\{
\begin{array}
[c]{ll}%
2 & \text{if }\beta=1\text{ and }3/2<\alpha\leq2\text{,}\\
1 & \text{if }\beta>1\text{ and }3/2<\alpha\leq2\text{,}\\
1 & \text{if }0<\beta<1\text{ and }\alpha=3-\beta\text{,}\\
0 & \text{else.}%
\end{array}
\right.
\]

If $d\geq4$ define%
\[
\eta=\left\{
\begin{array}
[c]{ll}%
1 & \text{if }\beta=1\text{ and }d/2<\alpha\leq d-1,\\
1 & \text{if }0<\beta<1\text{ and }\alpha=d-\beta,\\
0 & \text{else.}%
\end{array}
\right.
\]

Then there exists $C$\ such that for every
$k\in\mathbb{R}^{d}\setminus\left\{  0\right\}  $ and for every $-\infty<Y<+\infty$,
\begin{align*}
&
{\displaystyle\int_{\mathbb{R}^{d}}}
\left\vert x\right\vert ^{-\alpha}\left\vert k-x\right\vert ^{-\alpha}\left(
1+\left\vert g\left(  x\right)  +g\left(  k-x\right)  -Y\right\vert \right)
^{-\beta}dx\\
&  \leq C\left\vert k\right\vert ^{d-2\alpha}\left(  1+\left\vert k\right\vert
+\left\vert Y\right\vert \right)  ^{-\zeta  }\log^{\eta}\left(  2+\left\vert k\right\vert
+\left\vert Y\right\vert \right) .
\end{align*}
\end{lemma}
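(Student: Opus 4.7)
The natural first move is to rescale by $|k|$: setting $x=|k|u$ and $e=k/|k|$, the homogeneity of $g$ turns the integral into
\begin{align*}
|k|^{d-2\alpha}\int_{\mathbb{R}^{d}}|u|^{-\alpha}|e-u|^{-\alpha}\bigl(1+|k|\,|g(u)+g(e-u)-Y/|k||\bigr)^{-\beta}du.
\end{align*}
The prefactor is already what we want, so it suffices to bound the $u$-integral by $C(1+|k|+|Y|)^{-\zeta}\log^{\eta}(2+|k|+|Y|)$. I would then split $\mathbb{R}^{d}$ into $B_{1}=\{|u|\leq 1/4\}$, $B_{2}=\{|e-u|\leq 1/4\}$, $B_{3}=\{1/4\leq|u|,|e-u|\leq 4\}$, and $B_{4}=\{|u|\geq 4\}$, and use the involution $u\mapsto e-u$ to identify the $B_{1}$ and $B_{2}$ contributions.

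The substantial region is $B_{1}$. There $|e-u|\sim 1$, and reverting to $\rho=|k|\,|u|$ and $\vartheta=u/|u|$ expresses this piece as $|k|^{-\alpha}$ times exactly the double integral in Lemma \ref{lm_mu} with $\gamma=d-\alpha$, except that the upper limit of $\rho$ is $|k|/4$ instead of $\delta Y$. Inspecting the proofs of the last two lemmas one sees that the convex/monotonicity argument used to estimate the $\rho$-integral only needs $[0,M]$ as domain and produces a bound
\begin{align*}
\int_{0}^{M}\rho^{\gamma-1}(1+|F(\rho)-Y|)^{-\beta}\,d\rho\ \lesssim\ \min\{M,Y\}^{\gamma}(1+Y\Delta)^{-\tau}\log^{\sigma}(2+Y\Delta),
\end{align*}
with $\Delta=g(\vartheta)-\nabla g(k)\cdot\vartheta$. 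Taking $M=|k|/4$ and integrating in $\vartheta$ as in Lemma \ref{lm_mu} yields
\begin{align*}
|k|^{-\alpha}\min\{|k|,1+|Y|\}^{\gamma-\min\{\tau,(d-1)/2\}}\log^{\sigma+\varsigma}\bigl(2+|k|+|Y|\bigr),
\end{align*}
which is exactly $|k|^{d-2\alpha}(1+|k|+|Y|)^{-\zeta}\log^{\eta}(2+|k|+|Y|)$ with $\zeta=\min\{1,\beta,d-\alpha,(d-1)/2\}$ and $\eta=\sigma+\varsigma$.

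The remaining regions are easier and should not worsen the estimate. On $B_{3}$, the singular factors are bounded, so only the oscillation/decay factor matters; parametrising the level sets of $g(u)+g(e-u)$ by the coarea formula reduces matters to a one-dimensional integral of $(1+|k|(c-g(e)))^{-\beta}$ times a smooth Jacobian (strict convexity of $g$ away from rays ensures the level sets are smooth), giving a bound of the same form as for $B_{1}$. On $B_{4}$ one has $|u|^{-\alpha}|e-u|^{-\alpha}\sim|u|^{-2\alpha}$ and $2\alpha>d$, while $g(u)+g(e-u)\gtrsim|u|$; this makes the $B_{4}$ contribution dominated by the $B_{1}$ one.

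The main obstacle will be the precise tracking of $\eta$. One has to verify case by case that $\eta=\sigma+\varsigma$ reproduces the table in the statement: $\sigma=1$ iff $\beta=1$ and $\gamma\geq 1$ or $\beta=\gamma\leq 1$; $\varsigma=1$ iff $\tau=(d-1)/2$; and when both conditions coincide (as at $d=2$, $\alpha=3/2$, $\beta=1/2$, or $d=3$, $\beta=1$, $3/2<\alpha\leq 2$) the exponent jumps to $\eta=2$. The subtlety is to confirm that neither the $B_{3}$ coarea calculation nor the substitution $\min\{|k|/4,Y\}$ in the radial bound introduces an extra log factor in the non-borderline cases, and that all constants are uniform in $k$ and $Y$.
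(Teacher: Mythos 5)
Your overall plan (rescale by $|k|$, decompose the $u$-integral, reduce to the radial estimate of Lemma \ref{lm_mu}) is not far from the paper's spirit, but there are two genuine gaps that would need to be repaired before the argument closes.

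\textbf{The radial bound on $[0,M]$.} You assert, by ``inspecting the proofs,'' that
\[
\int_{0}^{M}\rho^{\gamma-1}\bigl(1+|F(\rho)-Y|\bigr)^{-\beta}\,d\rho\ \lesssim\ \min\{M,Y\}^{\gamma}(1+Y\Delta)^{-\tau}\log^{\sigma}(2+Y\Delta).
\]
This is not what the proof yields, and in fact it is false in the regime that matters here. The substitution $t=\Delta\rho$, $T=\Delta M$, $S=\Delta r$ in the paper's proof produces a bound of the form $M^{\gamma}(1+M\Delta)^{-\tau}\log^{\sigma}(2+M\Delta)$, with $M$, not $Y$, in all three places; replacing by $\min\{M,Y\}$ and $Y\Delta$ is not justified. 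Concretely, in your $B_{1}$ one has $M=|k|/4$ and $F(0)=g(k)\sim|k|$, so for every $\rho\in[0,M]$, $F(\rho)\sim|k|$. Take $Y$ bounded and $|k|$ large. Then the cutoff is $\sim(1+|k|)^{-\beta}$ uniformly, and the integral is $\sim|k|^{-\beta}M^{\gamma}\sim|k|^{\gamma-\beta}$, while your claimed bound is $\lesssim Y^{\gamma}(1+Y\Delta)^{-\tau}\lesssim 1$; if $\gamma>\beta$ the two disagree by an unbounded factor. The target estimate is still true there (because $\zeta\leq\beta$), but that is not what your displayed inequality says, and as written your proof rests on it. The paper never replaces the upper limit $\delta Y$ by an unrelated $M$: it first isolates the regimes $Y\leq\varepsilon|k|$ and $\{|x|+|k|\leq\varepsilon Y\}\cup\{|x|+|k|\geq\delta Y\}$, where the cutoff is uniformly $\lesssim(|k|+|Y|)^{-\beta}$, and only on the moving shell $\{\varepsilon Y\leq|x|+|k|\leq\delta Y\}$ (where the upper radial limit really is $\sim\delta Y$) does it invoke the radial lemma.

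\textbf{The coarea estimate on $B_{3}$.} The statement that ``strict convexity of $g$ away from rays ensures the level sets are smooth'' with ``a smooth Jacobian'' overlooks a genuine degeneracy: $h(u)=g(u)+g(e-u)$ attains its minimum $g(e)$ on the whole segment $[0,e]$, where $\nabla h=\nabla g(u)-\nabla g(e-u)$ vanishes identically. The coarea Jacobian $1/|\nabla h|$ blows up there, and the level-set volume derivative $V'(c)$ behaves like $(c-g(e))^{(d-3)/2}$ near the minimum (singular for $d=2$). The resulting one-dimensional integral can still be bounded as required, but it is a delicate calculation, not a transfer of the $B_{1}$ bound; the paper sidesteps this by keeping polar coordinates and encoding the degeneracy in the quantity $\Delta(\vartheta,\omega)\sim 1-\vartheta\cdot\omega$ inside Lemma \ref{lm_mu}. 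Relatedly, you do not address the endgame bookkeeping: the paper needs a monotonicity-in-$\beta$ trick (replacing $\beta$ by a smaller $\tilde\beta$) for $d=2$, $1/2<\beta\leq1$, $\alpha=2-\beta$, precisely to avoid a spurious logarithm and achieve $\eta=0$; a direct application of the $\sigma+\varsigma$ accounting you propose would overshoot the table there.

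In short: the geometric picture (rescaling, splitting near the two singularities/near the degenerate segment/at infinity, radial reduction) is sound and close in spirit to the paper's shell decomposition, but the two specific lemmas you lean on --- the $\min\{M,Y\}$ radial bound and the smooth coarea Jacobian on $B_{3}$ --- are not correct as stated, so the argument does not close without substantial repair along the lines the paper actually carries out.
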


  \begin{figure}[h]
    \begin{subfigure}[t]{0.47\textwidth}
      \includegraphics[width=\textwidth]{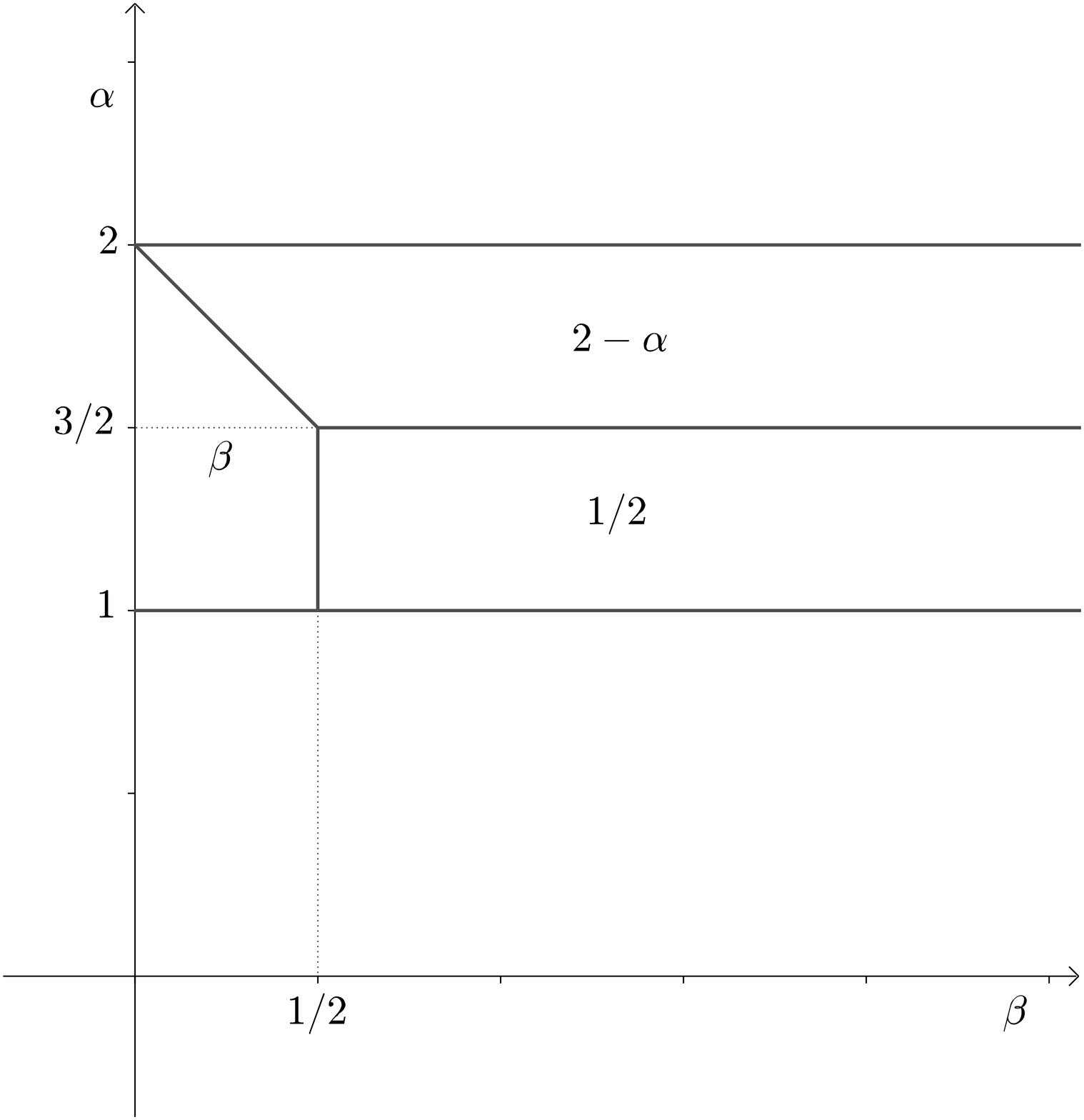}
      \caption{The case $d=2$.}
    \end{subfigure}\qquad
    \begin{subfigure}[t]{0.47\textwidth}
      \includegraphics[width=\textwidth]{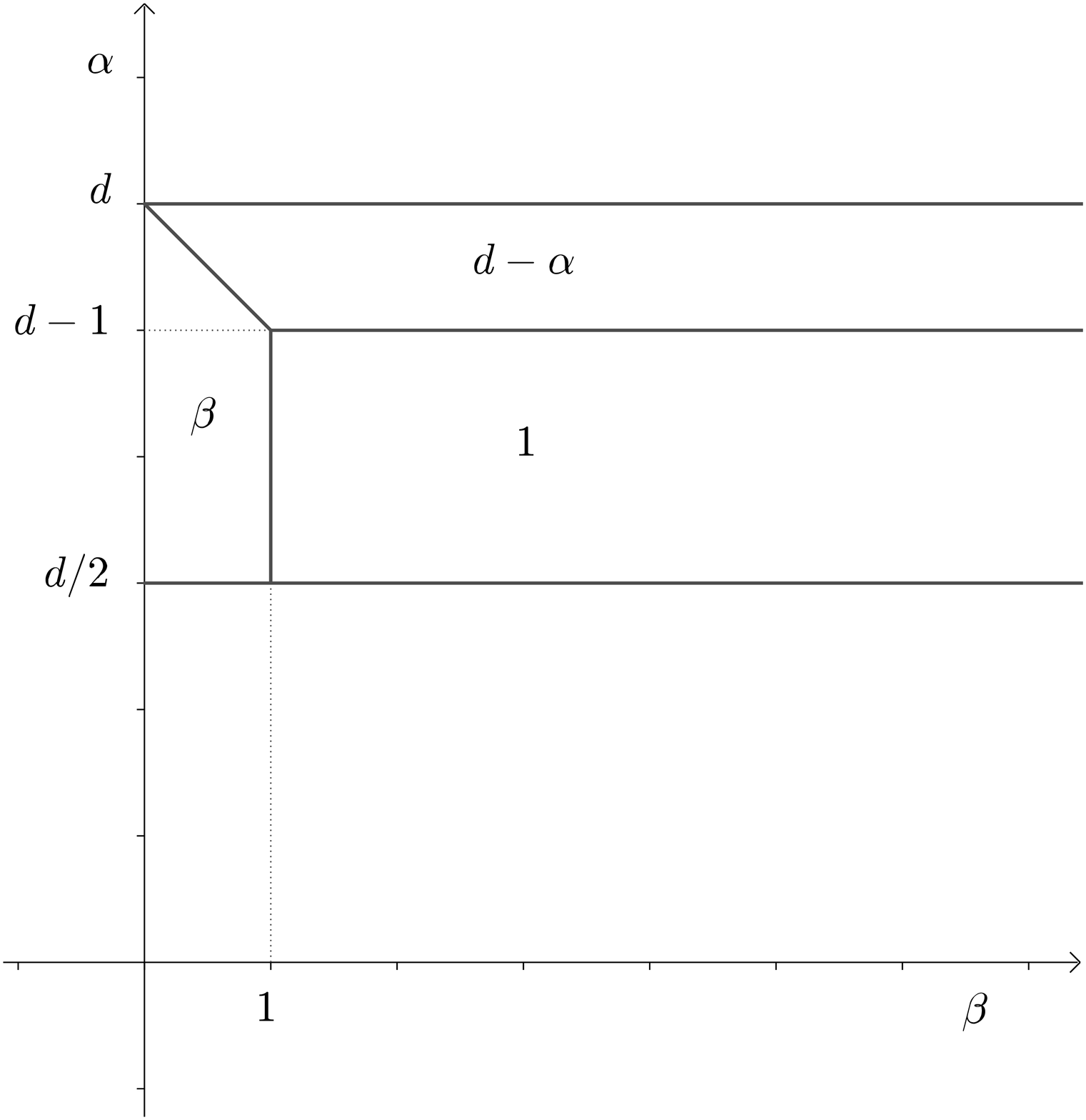}
      \caption{The case $d>2$.}
    \end{subfigure}
    \caption{The value of $\zeta$ as a function of $\beta$ and $\alpha$.}
  \end{figure}

\begin{proof}
Let us explain the numerology behind the lemma. If there is no cutoff $\left(
1+\left\vert g\left(  x\right)  +g\left(  k-x\right)  -Y\right\vert \right)
^{-\beta}$, then the change of variables $x=\left\vert k\right\vert z$ and
$k=\left\vert k\right\vert \omega$ gives
\[%
{\displaystyle\int_{\mathbb{R}^{d}}}
\left\vert x\right\vert ^{-\alpha}\left\vert k-x\right\vert ^{-\alpha
}dx=\left\vert k\right\vert ^{d-2\alpha}%
{\displaystyle\int_{\mathbb{R}^{d}}}
\left\vert z\right\vert ^{-\alpha}\left\vert \omega-z\right\vert ^{-\alpha
}dz=C\left\vert k\right\vert ^{d-2\alpha}.
\]
On the other hand, the cutoff $\left(  1+\left\vert g\left(  x\right)
+g\left(  k-x\right)  -Y\right\vert \right)  ^{-\beta}$ gives an extra decay.
In particular, the integral with the cutoff $\left(  1+\left\vert g\left(
x\right)  +g\left(  k-x\right)  -Y\right\vert \right)  ^{-\beta}$ with $\beta$
large is essentially over the $d-1$ dimensional set $\left\{  g\left(  x\right)  +g\left(
k-x\right)  =Y\right\}  $, that is the cutoff reduces the space dimension by
$1$. Hence, at least when $\beta$ is large, the integral with the
cutoff can be seen as the convolution in $\mathbb{R}^{d-1}$ of two homogeneous
functions of degree $-\alpha$, and this suggests the decay $\left\vert
k\right\vert ^{d-1-2\alpha}$. Hence, if with $\beta=0$ the decay is $\left\vert
k\right\vert ^{d-2\alpha}$, and if with $\beta>1$ the decay is $\left\vert
k\right\vert ^{d-1-2\alpha}$, then, by interpolation, when $0<\beta<1$ the
decay is $\left\vert k\right\vert ^{d-\beta-2\alpha}$. This is just a rough
numerology, indeed also the parameter $Y$ enters into play and the details of the proof
are more delicate.

For every $Y$ and $k$ one has
\begin{align*}
&
{\displaystyle\int_{\mathbb{R}^{d}}}
\left\vert x\right\vert ^{-\alpha}\left\vert k-x\right\vert ^{-\alpha}\left(
1+\left\vert g\left(  x\right)  +g\left(  k-x\right)  -Y\right\vert \right)
^{-\beta}dx\\
&  \leq%
{\displaystyle\int_{\mathbb{R}^{d}}}
\left\vert x\right\vert ^{-\alpha}\left\vert k-x\right\vert ^{-\alpha
}dx=C\left\vert k\right\vert ^{d-2\alpha}.
\end{align*}

Assume $\left\vert k\right\vert +\left\vert Y\right\vert \geq1.$ Since
$c\left\vert x\right\vert \leq g\left(  x\right)  \leq C\left\vert
x\right\vert $, one has
\[
c\left(  \left\vert k\right\vert +\left\vert x\right\vert \right)  \leq
g\left(  x\right)  +g\left(  k-x\right)  \leq C\left(  \left\vert k\right\vert
+\left\vert x\right\vert \right)  .
\]

Hence, if $-\infty<Y\leq\varepsilon\left\vert k\right\vert $ for a small
enough $\varepsilon>0$, one also has
\[
\left\vert g\left(  x\right)  +g\left(  k-x\right)  -Y\right\vert \geq
C\left(  \left\vert k\right\vert +\left\vert Y\right\vert \right)  .
\]

In this case $-\infty<Y\leq\varepsilon\left\vert k\right\vert $,
\begin{align*}
&
{\displaystyle\int_{\mathbb{R}^{d}}}
\left\vert x\right\vert ^{-\alpha}\left\vert k-x\right\vert ^{-\alpha}\left(
1+\left\vert g\left(  x\right)  +g\left(  k-x\right)  -Y\right\vert \right)
^{-\beta}dx\\
&  \leq C\left(  \left\vert k\right\vert +\left\vert Y\right\vert \right)
^{-\beta}%
{\displaystyle\int_{\mathbb{R}^{d}}}
\left\vert x\right\vert ^{-\alpha}\left\vert k-x\right\vert ^{-\alpha}dx\leq
C\left\vert k\right\vert ^{d-2\alpha}\left(  \left\vert k\right\vert
+\left\vert Y\right\vert \right)  ^{-\beta}.
\end{align*}

Assume now $\left\vert k\right\vert +\left\vert Y\right\vert \geq1$ and
$Y\geq\varepsilon\left\vert k\right\vert $. 
 Let us split the integral into the three sets
$\left\{  \left\vert x\right\vert +\left\vert k\right\vert \leq\varepsilon
Y\right\}  $, $\left\{  \varepsilon Y\leq\left\vert x\right\vert +\left\vert
k\right\vert \leq\delta Y\right\}  $ and $\left\{  \delta Y\leq\left\vert
x\right\vert +\left\vert k\right\vert <+\infty\right\}  $, with $\varepsilon$
small and $\delta$ large. In $\left\{  \left\vert x\right\vert +\left\vert
k\right\vert \leq\varepsilon Y\right\}  $ one has
\[
\left\vert g\left(  x\right)  +g\left(  k-x\right)  -Y\right\vert \geq CY.
\]

Hence%
\begin{align*}
&
{\displaystyle\int_{  \left\vert x\right\vert +\left\vert k\right\vert
\leq\varepsilon Y  }}
\left\vert x\right\vert ^{-\alpha}\left\vert k-x\right\vert ^{-\alpha}\left(
1+\left\vert g\left(  x\right)  +g\left(  k-x\right)  -Y\right\vert \right)
^{-\beta}dx\\
&  \leq CY^{-\beta}%
{\displaystyle\int_{\mathbb{R}^{d}}}
\left\vert x\right\vert ^{-\alpha}\left\vert k-x\right\vert ^{-\alpha}dx\leq
CY^{-\beta}\left\vert k\right\vert ^{d-2\alpha}\leq C\left\vert k\right\vert
^{d-2\alpha}\left(  \left\vert k\right\vert +\left\vert Y\right\vert \right)
^{-\beta}.
\end{align*}

In $\left\{  \delta Y\leq\left\vert x\right\vert +\left\vert k\right\vert
<+\infty\right\}  $ one has
\[
\left\vert g\left(  x\right)  +g\left(  k-x\right)  -Y\right\vert \geq
C\left(  \left\vert k\right\vert +\left\vert x\right\vert \right)  .
\]

Hence%
\begin{align*}
&
{\displaystyle\int_{ \delta Y\leq\left\vert x\right\vert +\left\vert
k\right\vert <+\infty  }}
\left\vert x\right\vert ^{-\alpha}\left\vert k-x\right\vert ^{-\alpha}\left(
1+\left\vert g\left(  x\right)  +g\left(  k-x\right)  -Y\right\vert \right)
^{-\beta}dx\\
&  \leq C%
{\displaystyle\int_{  \delta Y\leq\left\vert x\right\vert +\left\vert
k\right\vert <+\infty  }}
\left\vert x\right\vert ^{-\alpha}\left\vert k-x\right\vert ^{-\alpha}\left(
\left\vert k\right\vert +\left\vert x\right\vert \right)  ^{-\beta}dx\\
&  \leq CY^{-\beta}%
{\displaystyle\int_{\mathbb{R}^{d}}}
\left\vert x\right\vert ^{-\alpha}\left\vert k-x\right\vert ^{-\alpha}dx\\
&  \leq C\left\vert k\right\vert ^{d-2\alpha}\left(  \left\vert k\right\vert
+\left\vert Y\right\vert \right)  ^{-\beta}.
\end{align*}

It remains to estimate the integral over the spherical shell
\[
\left\{  \varepsilon Y\leq\left\vert x\right\vert +\left\vert k\right\vert
\leq\delta Y\right\}  \subseteq\left\{
\begin{array}
[c]{ll}%
\left\{  \left\vert x\right\vert \leq\delta Y\right\}  & \text{if }%
Y\leq4\left\vert k\right\vert /\varepsilon,\\
\left\{  \varepsilon Y/2\leq\left\vert x\right\vert \leq\delta Y\right\}  &
\text{if }Y\geq4\left\vert k\right\vert /\varepsilon.
\end{array}
\right.
\]

Recall that $Y\geq\varepsilon\left\vert k\right\vert $. Hence if
$\varepsilon\left\vert k\right\vert \leq Y\leq4\left\vert k\right\vert
/\varepsilon$, then%
\begin{align*}
&
{\displaystyle\int_{ \left\vert x\right\vert \leq\delta Y  }}
\left\vert x\right\vert ^{-\alpha}\left\vert k-x\right\vert ^{-\alpha}\left(
1+\left\vert g\left(  k+x\right)  +g\left(  k-x\right)  -Y\right\vert \right)
^{-\beta}dx\\
&  \leq%
{\displaystyle\int_{\left\{  \left\vert x-k/2\right\vert \leq\delta
Y+\left\vert k\right\vert /2\right\}  \cap\left\{  \left\vert x\right\vert
\leq\left\vert k-x\right\vert \right\}  }}
\left\vert x\right\vert ^{-\alpha}\left\vert k-x\right\vert ^{-\alpha}\left(
1+\left\vert g\left(  x\right)  +g\left(  k-x\right)  -Y\right\vert \right)
^{-\beta}dx\\
&  +%
{\displaystyle\int_{\left\{  \left\vert x-k/2\right\vert \leq\delta
Y+\left\vert k\right\vert /2\right\}  \cap\left\{  \left\vert x\right\vert
\geq\left\vert k-x\right\vert \right\}  }}
\left\vert x\right\vert ^{-\alpha}\left\vert k-x\right\vert ^{-\alpha}\left(
1+\left\vert g\left(  x\right)  +g\left(  k-x\right)  -Y\right\vert \right)
^{-\beta}dx\\
&  \leq C\left\vert k\right\vert ^{-\alpha}%
{\displaystyle\int_{  \left\vert x\right\vert \leq C\delta Y}}
\left\vert x\right\vert ^{-\alpha}\left(  1+\left\vert g\left(  x\right)
+g\left(  k-x\right)  -Y\right\vert \right)  ^{-\beta}dx.
\end{align*}

Similarly, if $Y\geq4\left\vert k\right\vert /\varepsilon$, then%
\begin{align*}
&
{\displaystyle\int_{  \varepsilon Y/2\leq\left\vert x\right\vert
\leq\delta Y  }}
\left\vert x\right\vert ^{-\alpha}\left\vert k-x\right\vert ^{-\alpha}\left(
1+\left\vert g\left(  x\right)  +g\left(  k-x\right)  -Y\right\vert \right)
^{-\beta}dx\\
&  \leq C%
{\displaystyle\int_{  \varepsilon Y/2\leq\left\vert x\right\vert
\leq\delta Y  }}
\left\vert x\right\vert ^{-2\alpha}\left(  1+\left\vert g\left(  x\right)
+g\left(  k-x\right)  -Y\right\vert \right)  ^{-\beta}dx.
\end{align*}

In polar coordinates $x=\rho\vartheta$ with $\rho\geq0$ and $\left\vert
\vartheta\right\vert =1$ the first integral takes the form%
\begin{align*}
&  \left\vert k\right\vert ^{-\alpha}%
{\displaystyle\int_{ \left\vert x\right\vert \leq C\delta Y}}
\left\vert x\right\vert ^{-\alpha}\left(  1+\left\vert g\left(  x\right)
+g\left(  k-x\right)  -Y\right\vert \right)  ^{-\beta}dx\\
&  =\left\vert k\right\vert ^{-\alpha}\int_{  \left\vert \vartheta
\right\vert =1  }\int_{0}^{C\delta Y}\rho^{d-\alpha-1}\left(
1+\left\vert g\left(  \rho\vartheta\right)  +g\left(  k-\rho\vartheta\right)
-Y\right\vert \right)  ^{-\beta}d\rho d\vartheta.
\end{align*}

Similarly, the second integral takes the form%
\begin{align*}
&
{\displaystyle\int_{  \varepsilon Y/2\leq\left\vert x\right\vert
\leq\delta Y  }}
\left\vert x\right\vert ^{-2\alpha}\left(  1+\left\vert g\left(  x\right)
+g\left(  k-x\right)  -Y\right\vert \right)  ^{-\beta}dx\\
&  =\int_{  \left\vert \vartheta\right\vert =1  }%
\int_{\varepsilon Y/2}^{\delta Y}\rho^{d-2\alpha-1}\left(  1+\left\vert
g\left(  \rho\vartheta\right)  +g\left(  k-\rho\vartheta\right)  -Y\right\vert
\right)  ^{-\beta}d\rho d\vartheta.
\end{align*}

By Lemma \ref{lm_mu}, the first integral can be bounded by%
\begin{align*}
&  \left\vert k\right\vert ^{-\alpha}\int_{  \left\vert \vartheta
\right\vert =1  }\int_{0}^{C\delta Y}\rho^{d-\alpha-1}\left(
1+\left\vert g\left(  \rho\vartheta\right)  +g\left(  k-\rho\vartheta\right)
-Y\right\vert \right)  ^{-\beta}d\rho d\vartheta\\
&  \leq C\left\vert k\right\vert ^{-\alpha}Y^{d-\alpha-\min\{
1,\beta,d-\alpha,\left(  d-1\right)  /2\}  }\log^{\sigma_{1}+\varsigma_{1}%
}\left(  2+Y\right)  \\
&  \leq C\left\vert k\right\vert ^{d-2\alpha}Y^{-\min\{  1,\beta
,d-\alpha,\left(  d-1\right)  /2\}  }\log^{\sigma_{1}+\varsigma_{1}}\left(
2+Y\right)  ,
\end{align*}
where
\begin{align*}
\varsigma_{1} &  =\left\{
\begin{array}
[c]{ll}%
1 & \text{if }\min\left\{  1,\beta,d-\alpha\right\}  =\left(  d-1\right)  /2\\
0 & \text{else.}%
\end{array}
\right.  \\
\sigma_{1} &  =\left\{
\begin{array}
[c]{ll}%
1 & \text{if }\beta=1\text{ and }d-\alpha\geq1,\\
1 & \text{if }\beta=d-\alpha\leq1,\\
0 & \text{else.}%
\end{array}
\right.
\end{align*}

Again by Lemma \ref{lm_mu}, the second integral can be bounded by%
\begin{align*}
&  \int_{  \left\vert \vartheta\right\vert =1  }\int%
_{\varepsilon Y/2}^{\delta Y}\rho^{d-2\alpha-1}\left(  1+\left\vert g\left(
\rho\vartheta\right)  +g\left(  k-\rho\vartheta\right)  -Y\right\vert \right)
^{-\beta}d\rho d\vartheta\\
&  \leq CY^{d-2\alpha-1}\int_{  \left\vert \vartheta\right\vert
=1  }\int_{0}^{\delta Y}\left(  1+\left\vert g\left(  \rho
\vartheta\right)  +g\left(  k-\rho\vartheta\right)  -Y\right\vert \right)
^{-\beta}d\rho d\vartheta\\
&  \leq CY^{d-2\alpha-\min\{  1,\beta,\left(  d-1\right)  /2\}  }%
\log^{\sigma_{2}+\varsigma_{2}}\left(  2+Y\right)  \\
&  \leq C\left\vert k\right\vert ^{d-2\alpha}Y^{-\min\{  1,\beta,\left(
d-1\right)  /2\}  }\log^{\sigma_{2}+\varsigma_{2}}\left(  2+Y\right)
\end{align*}
where%
\begin{align*}
\varsigma_{2} &  =\left\{
\begin{array}
[c]{ll}%
1 & \text{if }\min\left\{  1,\beta\right\}  =\left(  d-1\right)  /2\\
0 & \text{else.}%
\end{array}
\right.  \\
\sigma_{2} &  =\left\{
\begin{array}
[c]{ll}%
1 & \text{if }\beta=1,\\
0 & \text{else.}%
\end{array}
\right.
\end{align*}
It is easy to show that when $d\geq3$ then $\varsigma_{1}+\sigma_{1}\leq\eta$, and
that when $d\geq3$ and $\alpha\leq d-1$, then $\varsigma_{2}+\sigma_{2}\leq\eta$.
This shows the lemma in the case $d\geq3$. The case $d=2$ is more delicate.
First observe that the integral
\[%
{\displaystyle\int_{\mathbb{R}^{d}}}
\left\vert x\right\vert ^{-\alpha}\left\vert k-x\right\vert ^{-\alpha}\left(
1+\left\vert g\left(  x\right)  +g\left(  k-x\right)  -Y\right\vert \right)
^{-\beta}dx
\]
is a decreasing function of the variable $\beta$. It follows that for
$1/2<\beta<1$ and $\alpha=2-\beta$, one can take some $\tilde{\beta}\in\left(
1/2,\beta\right)  $ and obtain%
\begin{align*}
&
{\displaystyle\int_{\mathbb{R}^{2}}}
\left\vert x\right\vert ^{-\alpha}\left\vert k-x\right\vert ^{-\alpha}\left(
1+\left\vert g\left(  x\right)  +g\left(  k-x\right)  -Y\right\vert \right)
^{-\beta}dx\\
& \leq%
{\displaystyle\int_{\mathbb{R}^{2}}}
\left\vert x\right\vert ^{-\alpha}\left\vert k-x\right\vert ^{-\alpha}\left(
1+\left\vert g\left(  x\right)  +g\left(  k-x\right)  -Y\right\vert \right)
^{-\tilde{\beta}}dx\\
& \leq C\left\vert k\right\vert ^{2-2\alpha}Y^{-\min\{  1,\tilde{\beta
},2-\alpha,1/2\}  }\leq C\left\vert k\right\vert ^{2-2\alpha}%
Y^{-\min\{  1,\beta,2-\alpha,1/2\} }\\
& \leq C\left\vert k\right\vert ^{2-2\alpha}Y^{-1/2}.
\end{align*}
This shows that for $d=2$ and $1/2<\beta<1$ and $\alpha=2-\beta$ one can
indeed take $\eta=0.$ A similar argument shows that one can take $\eta=0$ 
also when $\beta=1$ and $1<\alpha<3/2$. 
In the remaining cases, it is easy to show that $\varsigma
_{1}+\sigma_{1}\leq\eta$, and that when $\alpha\leq3/2$, then $\varsigma_{2}%
+\sigma_{2}\leq\eta$.
\end{proof}

\begin{lemma}\label{lm_p=2}
Let $z_2=d/2$. If $\mathrm{Re}\left(  z\right)  \geq z_2$, there exists $C>0$ such that for every $R\ge1$ and $0<\delta<1/2$,
\[
\int_{\mathbb{R}}  \int_{\mathbb{T}^{d}}|\Phi\left(  \delta,z,r,x\right)  | ^{2}dxd\mu(r-R)\leq
\begin{cases}
C & \text{if }\mathrm{Re}\left(  z\right)  >z_2,\\
C\log\left(  1/\delta\right)  & \text{if }\mathrm{Re}\left(  z\right)=z_2.
\end{cases}
\]

\end{lemma}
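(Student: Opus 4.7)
The plan is to apply Lemma~\ref{lm_6_integral_ellipsoid} with $N=1$. In that case the constraint $m_1+\cdots+m_N=k$ collapses to $m_1=k$, and likewise $n_1=k$, so the ``surface integrals'' over the $(N-1)d$-dimensional variety degenerate to point evaluations. Crucially, $g(m_1)-g(n_1)=g(k)-g(k)=0$, so the oscillation factor $(1+|g(m_1)-g(n_1)|)^{-\beta}$ is identically $1$ and the parameter $\beta$ plays no role here. Choosing any fixed $\lambda>0$, the lemma therefore reduces the problem to estimating
\[
\int_{\mathbb R}\int_{\mathbb T^d}|\Phi(\delta,z,r,x)|^2\,dx\,d\mu(r-R)
\le C\int_{|k|>1}(1+\delta|k|)^{-3\lambda}|k|^{-2\mathrm{Re}(z)}\,dk,
\]
which in spherical coordinates is a constant multiple of the radial integral $\int_1^{+\infty}(1+\delta\rho)^{-3\lambda}\rho^{d-1-2\mathrm{Re}(z)}\,d\rho$.

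If $\mathrm{Re}(z)>d/2$, then $d-1-2\mathrm{Re}(z)<-1$, so even discarding the cutoff $(1+\delta\rho)^{-3\lambda}\le 1$ leaves an absolutely convergent integral, and one obtains the uniform bound $C$. If instead $\mathrm{Re}(z)=d/2=z_2$, the exponent is exactly $-1$ and the integrand $\rho^{-1}$ diverges logarithmically without the cutoff. I would split at $\rho=1/\delta$: on $[1,1/\delta]$ one bounds $(1+\delta\rho)^{-3\lambda}\le 1$ and obtains $\int_1^{1/\delta}\rho^{-1}\,d\rho=\log(1/\delta)$; on $[1/\delta,+\infty)$ one uses $(1+\delta\rho)^{-3\lambda}\le(\delta\rho)^{-3\lambda}$ with $\lambda>0$ fixed to make the tail integral finite and independent of $\delta$. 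Adding the two contributions gives the claimed $C\log(1/\delta)$.

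Essentially nothing is hard here once Lemma~\ref{lm_6_integral_ellipsoid} is in hand: the $N=1$ specialization kills the $\beta$-dependence, and the remaining estimate is the standard split of a radial integral at the natural scale $\rho\sim 1/\delta$. The only point worth flagging is that the constant in Lemma~\ref{lm_6_integral_ellipsoid} depends on $\lambda$, but since any fixed $\lambda>0$ suffices to produce an integrable tail this is harmless, and no logarithmic loss beyond the one displayed appears.
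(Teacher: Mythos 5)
Your proof is correct and reduces to essentially the same Plancherel computation the paper uses, only packaged differently: the paper applies Plancherel in $x$ directly and reads off a lattice sum $\sum_{m\neq0}(1+\delta|m|)^{-\lambda}|m|^{-2\mathrm{Re}(z)}$, while you invoke Lemma~\ref{lm_6_integral_ellipsoid} with $N=1$ (correctly observing that the constraint $m_1=n_1=k$ makes the oscillation factor identically one, so $\beta$ drops out) and arrive at the corresponding continuous integral; these are the same estimate, and your explicit radial split at $\rho\sim 1/\delta$ is the standard way to see the $\log(1/\delta)$ loss that the paper leaves implicit with ``and the lemma follows.''
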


\begin{proof}
By Plancherel formula applied to $\Phi\left(  \delta,z,r,x\right)  $
as a function of the variable $x$,
\begin{align*}
&  \int_{\mathbb{R}}\int_{\mathbb{T}^{d}}| \Phi\left(\delta,z,r,x\right)  | ^{2}dxd\mu(r-R)\\
&  \leq C\sum_{j=0}^h\sum_{m\in\mathbb{Z}^{d},\,m\neq0}| \widehat{\varphi}\left(  \delta m\right)  |^{2}| m| ^{-2\mathrm{Re}\left(  z\right)-2j  }\int_{\mathbb{R}} r^{-2j}d\mu(r-R).
\end{align*}
Observe that if $R>1$ then the singularity of $r^{-j}$ is outside the support of $d\mu(r-R)$, and recall that 
$\widehat\varphi(r)$ has fast decay at infinity. Hence for every $\lambda>0$,
\[
\int_{\mathbb{R}}\int_{\mathbb{T}^{d}}| \Phi\left(\delta,z,r,x\right)  | ^{2}dxd\mu(r-R) \leq C\sum_{m\in\mathbb{Z}^{d},\,m\neq0}\left(  1+\delta|m| \right)  ^{-\lambda}| m| ^{-2\mathrm{Re}\left(  z\right)  }
\]
and the lemma follows.
\end{proof}

The following lemma is an estimate of the $L^p  $ norms of the function $\Phi\left(  \delta,z,r,x\right)  $ when $p=4$ and the space dimension $d\geq2$. 
In dimension $d=2$ there is a second relevant exponent $p=6$, and this will be considered later.

  \begin{figure}[h]
    \begin{subfigure}[t]{0.47\textwidth}
      \includegraphics[width=\textwidth]{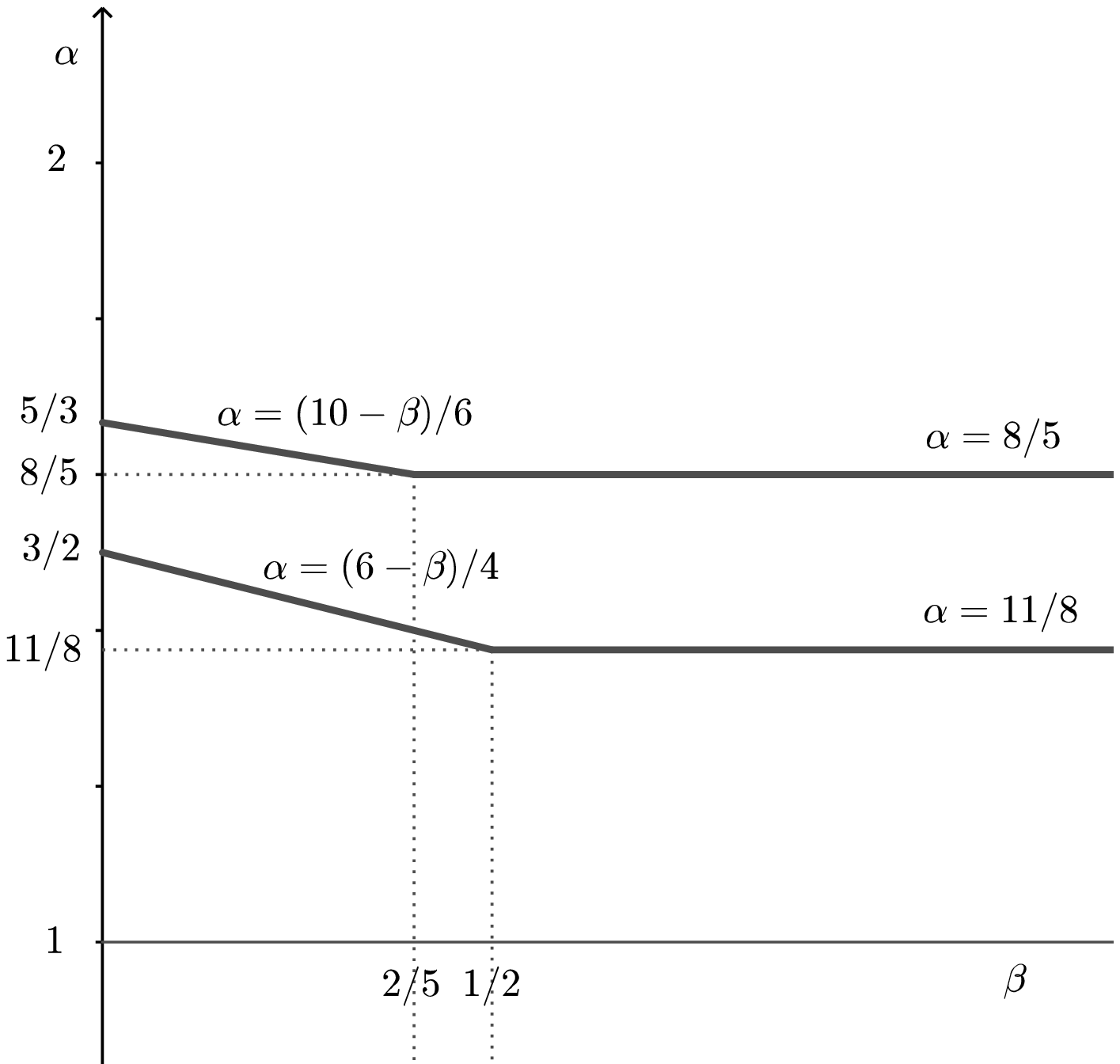}
      \caption{The case $d=2$ with $z_4$ (bottom) and $z_6$ (top).}
    \end{subfigure}\qquad
    \begin{subfigure}[t]{0.47	\textwidth}
      \includegraphics[width=\textwidth]{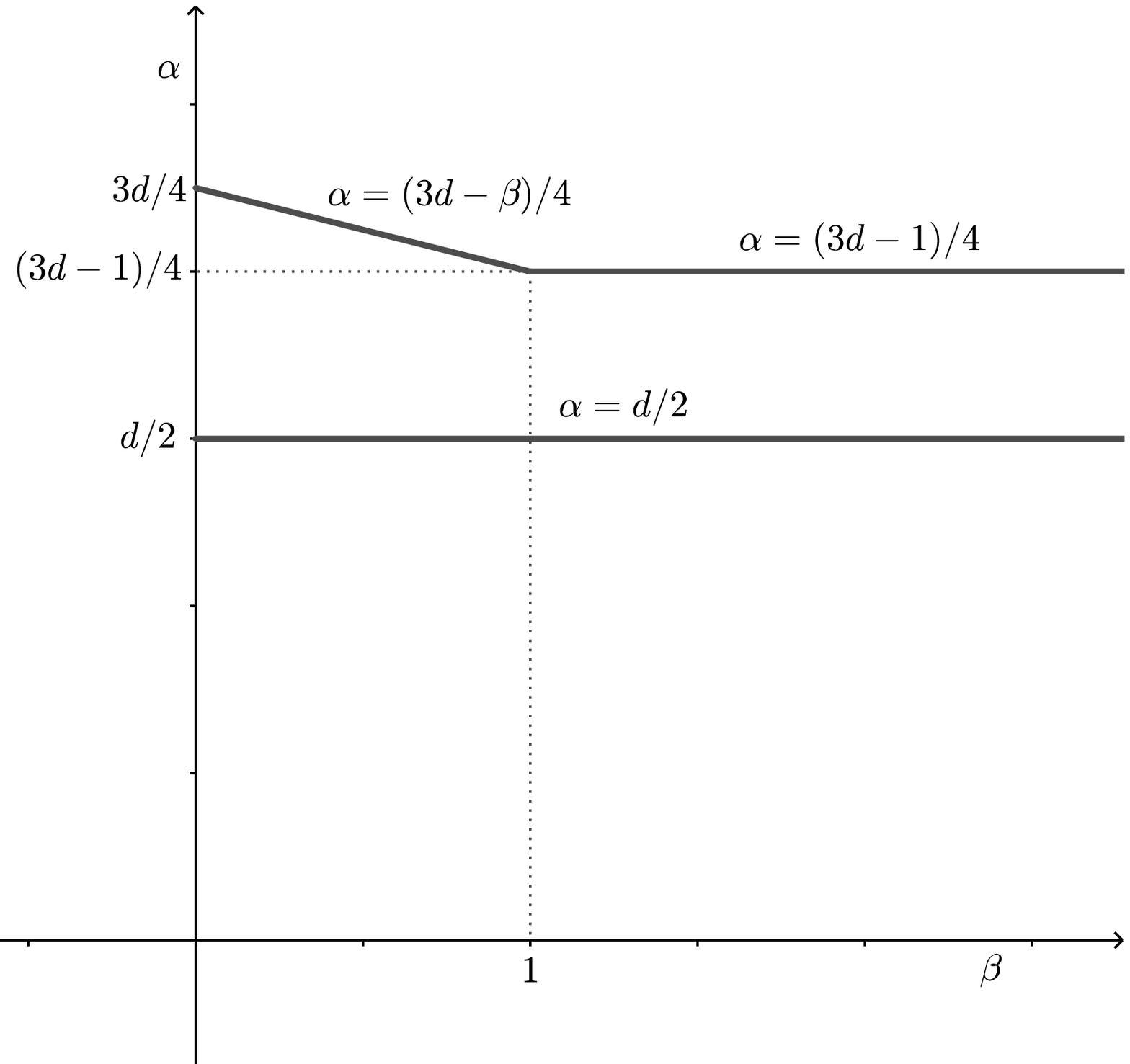}
      \caption{The case $d>2$ with $z_2$ (bottom) and $z_4$ (top).}
    \end{subfigure}
    \caption{The minimal values of $\alpha=\mathrm{Re}(z)$ as a function of $\beta$.}\label{F3}
  \end{figure}

\begin{lemma}
\label{lm_p=4}
Let $z\in\mathbb C$. 
Define $\nu=\min \{1,(d-1)/2\}$ and let $\eta=\eta(d,\mathrm{Re}(z),\beta)$ be defined as in Lemma \ref{integral}.
Let $z_4=\max\{(3d-\beta)/4,\,(3d-\nu)/4\}$. If $\mathrm{Re}(z)  \geq z_4$, then there exists $C>0$ such that for every $R\geq 1$ and $0<\delta<1/2$,
\begin{align*}
  \int_{\mathbb{R}}  \int_{\mathbb{T}^{d}}|\Phi\left(  \delta,z,r,x\right)  | ^{4}dxd\mu(r-R)
  \leq
\begin{cases}
C & \text{if }\mathrm{Re}\left(  z\right)  >z_4,\\
C\log^{\eta+1}\left(  1/\delta\right)  & \text{if }\mathrm{Re}\left(  z\right)=z_4.\\
\end{cases}
\end{align*}
\end{lemma}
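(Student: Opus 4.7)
The plan is to apply Lemma~\ref{lm_6_integral_ellipsoid} with $N=2$, which dominates $\int\int|\Phi(\delta,z,r,x)|^4\,dx\,d\mu(r-R)$ by a four-fold integral over $k\in\mathbb R^d$, pairs $(m_1,m_2)$ with $m_1+m_2=k$ and pairs $(n_1,n_2)$ with $n_1+n_2=k$, coupled by the factor $(1+|g(m_1)+g(m_2)-g(n_1)-g(n_2)|)^{-\beta}$. I will estimate the inner $n$-integral by a single application of Lemma~\ref{integral}, the outer $m$-integral by an elementary scaling bound, and finish with a radial integration in $k$ that produces the logarithmic loss at the critical line.

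First, fix $k$ and $(m_1,m_2)$ and apply Lemma~\ref{integral} to the inner integral over $(n_1,n_2)$ (dropping the harmless cutoffs $(1+\delta|n_i|)^{-\lambda}$), with $\alpha=\mathrm{Re}(z)$ and $Y=Y_m:=g(m_1)+g(m_2)$. This bounds the $n$-integral by $C|k|^{d-2\alpha}(1+|k|+Y_m)^{-\zeta}\log^\eta(2+|k|+Y_m)$, where $\zeta=\min\{1,\beta,d-\alpha,(d-1)/2\}$. Since $Y_m\ge 0$, I replace $(1+|k|+Y_m)^{-\zeta}$ by $(1+|k|)^{-\zeta}$. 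For the $m$-integration I invoke the elementary bound
\[
\int_{m_1+m_2=k,\,|m_i|>1}|m_1|^{-\alpha}|m_2|^{-\alpha}\,d\sigma\le C|k|^{d-2\alpha}
\]
(via the change of variables $m_1=|k|z$ for $|k|\ge 2$, and a direct estimate for $|k|\le 2$) and I treat the log factor in two regimes: when $\alpha=z_4$ I use $Y_m\lesssim 1/\delta$ (effectively forced by the cutoffs $(1+\delta|m_i|)^{-\lambda}$) to bound $\log^\eta(2+|k|+Y_m)\le C\log^\eta(1/\delta)$; when $\alpha>z_4$ I use instead $\log^\eta(X)\le C_\epsilon X^\epsilon$ for small $\epsilon>0$, effectively replacing $\zeta$ by $\zeta-\epsilon$.

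After these steps the estimate reduces, up to the prefactor $\log^\eta(1/\delta)$ in the critical case, to
\[
\int_{\mathbb R^d}(1+\delta|k|)^{-\lambda}|k|^{2d-4\alpha}(1+|k|)^{-\zeta}\,dk.
\]
In polar coordinates, for $r\ge 1$ the integrand becomes $r^{3d-4\alpha-1-\zeta}$. The key algebraic check is that at $\alpha=z_4=\max\{(3d-\beta)/4,(3d-\nu)/4\}$ one has $d-\alpha=\min\{(d+\beta)/4,(d+\nu)/4\}\ge\min\{\nu,\beta\}$, since $d\ge 3\min\{\nu,\beta\}$ holds for every $d\ge 2$; hence $\zeta=\min\{\nu,\beta\}$ and $3d-4\alpha-\zeta=0$. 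Thus $\int_1^{1/\delta}r^{-1}\,dr=\log(1/\delta)$ yields $C\log^{\eta+1}(1/\delta)$ at $\alpha=z_4$, while for $\alpha>z_4$ the exponent is strictly less than $-1$ (with the $\epsilon$-adjustment) and the integral is $O(1)$.

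The main obstacle will be the careful bookkeeping of the logarithmic factors. One must avoid applying Lemma~\ref{integral} a second time to the $m$-integration, which would produce a superfluous $\log^{\eta'}$ factor and spoil the sharp power $\eta+1$; and one must ensure that no log at all appears off the critical line. The two-case treatment of $\log^\eta(2+|k|+Y_m)$ is engineered precisely to pay only a single extra logarithm, and only from the radial $k$-integral at the critical exponent.
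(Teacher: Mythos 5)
Your proposal is correct and follows essentially the same route as the paper: apply Lemma~\ref{lm_6_integral_ellipsoid} with $N=2$, estimate the inner $n$-integral once by Lemma~\ref{integral} (with $Y=g(m_1)+g(m_2)$), bound the $m$-integral by the standard $C|k|^{d-2\alpha}$, and finish with the radial $k$-integral; your numerology check that $3d-4z_4-\zeta=0$ with $\zeta=\min\{\nu,\beta\}$ at the critical line is correct. The only real variation is in the handling of the logarithmic factor: you retain the mollifier cutoffs $(1+\delta|m_i|)^{-\lambda}$ and use $Y_m\lesssim|m_1|+|m_2|$ together with $|k|\le|m_1|+|m_2|$ to absorb $\log^{\eta}(2+|k|+Y_m)$ into $\log^{\eta}(1/\delta)$ before the $m$-integration (and $X^{\varepsilon}$ off the critical line), whereas the paper drops the $m$-cutoffs, bounds $(1+|k|+g(m)+g(k-m))^{-\zeta}\le(1+|k|+|m|)^{-\zeta}$, and splits the $m$-integral into $|m|\le2|k|$ and $|m|\ge2|k|$ to keep the log as $\log^{\eta}(|k|)$ inside the final $k$-integral. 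Both yield the same $\log^{\eta+1}(1/\delta)$ at $\alpha=z_4$. One small point to tidy up: for $|k|\le2$ the displayed bound $C|k|^{d-2\alpha}$ is not what you want (it blows up); the "direct estimate" there should be a uniform constant via a rearrangement inequality, as in the paper's treatment and in Lemma~\ref{LM_Integrale}(5).
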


\begin{proof}
Call $\alpha=\mathrm{Re}\left(  z\right)$.
By the above Lemma \ref{lm_6_integral_ellipsoid} with $N=2$, it suffices to estimate 
\begin{align*}
&  \underset{\mathbb{R}^{d}}\int\left(  1+\delta| k| \right)^{-\lambda}\underset{| m| ,| k-m|>1}\int| m| ^{-\alpha}| k-m| ^{-\alpha}\\
&  \times\underset{{| n| ,| k-n|>1}}\int| n| ^{-\alpha}| k-n| ^{-\alpha}\left(  1+| g( m) +g( k-m)-g(n) -g( k-n) | \right)^{-\beta}dn dm dk.
\end{align*}
Notice that we have canceled all the cutoff functions in the variables $m$, $k-m$, $n$, $k-n$. 
The integral over the set $\left\{  | k| \leq2\right\}  $ is bounded by
\begin{align*}
&  \int_{  | k| \leq2  }\int_{\substack{| m| ,| k-m| >1}}|m| ^{-\alpha}| k-m| ^{-\alpha}\int_{\substack{| n| ,| k-n| >1}}|n| ^{-\alpha}| k-n| ^{-\alpha}dn dm dk\\
& \leq \int_{| k| \leq2} {dk}  \int_{\substack{|m| >1}}| m| ^{-2\alpha}dm  \int_{\substack{|n| >1}}| n| ^{-2\alpha}dn
\leq C.
\end{align*}
Let us now consider the integral over the set $\left\{  |k| \geq2\right\}  $,
\begin{align*}
&  \underset{|k|\geq 2}\int\left(  1+\delta| k| \right)^{-\lambda}\underset{{| m| ,| k-m|>1}}\int| m| ^{-\alpha}| k-m| ^{-\alpha}\\
&  \times\underset{{| n| ,| k-n|>1}}\int| n| ^{-\alpha}| k-n| ^{-\alpha}\left(  1+| g( m) +g( k-m)-g(n) -g( k-n) | \right)^{-\beta}dn dm dk.
\end{align*}
By Lemma \ref{integral}, the inner integral
\[
\int_{\substack{| n| ,| k-n|>1}}| n| ^{-\alpha}| k-n| ^{-\alpha}\left(  1+| g( m) +g( k-m)-g(n) -g( k-n) | \right)^{-\beta}dn
\]
is bounded by
\begin{align*}
&C| k| ^{d-2\alpha}(1+|k|+g(m)+g(k-m))^{-\zeta}\log^{\eta}(2+|k|+g(m)+g(k-m))\\
&\le C| k| ^{d-2\alpha}(1+|k|+|m|)^{-\zeta}\log^{\eta}(2+|k|+|m|).
\end{align*}
Thus, the goal estimate becomes
\begin{align*}
&  \underset{{  | k| >2}  }\int\left(1+\delta| k| \right)  ^{-\lambda}| k|^{d-2\alpha}\underset{\mathbb{R}^{d}}\int| m| ^{-\alpha}| k-m| ^{-\alpha}\left(  1+| k|+| m| \right)  ^{-\zeta}\log^{\eta}(|k|+|m|)dmdk\\
&  \leq C\int_{\substack{  | k| >2}  }\left(1+\delta| k| \right)  ^{-\lambda}| k|^{d-2\alpha-\zeta}
\log^{\eta}(|k|)
\int_{|m|\leq 2| k| }| m| ^{-\alpha}| k-m| ^{-\alpha}dmdk\\
&  + C\int_{  | k| >2  }\left(1+\delta| k| \right)  ^{-\lambda}| k|
^{d-2\alpha}\int_{  |m|\geq 2| k|  }| m|^{-2\alpha-\zeta} \log^{\eta}(|m|)dmdk\\
&\leq \int_{  | k| >2  }\left(1+\delta| k| \right)  ^{-\lambda}| k|
^{2d-4\alpha-\zeta} \log^{\eta}(|k|)dk\\
&\leq
\begin{cases}
C & \text{if }\alpha>\left(  3d-\zeta\right)  /4,\\
C\log^{\eta+1}\left(  2/\delta\right)  & \text{if }\alpha=\left(  3d-\zeta\right)/4.
\end{cases}
\end{align*}
The result now follows after the observation that $\alpha>(3d-\zeta)/4$ if and only if $\alpha>z_4$
 and $\alpha=(3d-\zeta)/4$ if and only if $\alpha=z_4$.
\end{proof}

In the following lemma the space dimension is $d=2$.
 
 \begin{lemma} \label{lm_p=6} 
 Let $d=2$ and let $z_6=\max\{(10-\beta)/6,8/5\}$.
If $\mathrm{Re}(z)  \geq z_6$, then there exists $C>0$ such that for every $R\geq 1$ and $0<\delta<1/2$,
\begin{align*}
  \int_{\mathbb{R}}  \int_{\mathbb{T}^{2}}|\Phi\left(  \delta,z,r,x\right)  | ^{6}dxd\mu(r-R)
  \leq
\begin{cases}
C & \text{if }\mathrm{Re}\left(  z\right)  >z_6,\\
C\log\left(  1/\delta\right)  & \text{if }\mathrm{Re}\left(  z\right)=z_6 \text { and }
\beta\neq 2/5,\\
C\log^2\left(  1/\delta\right)  & \text{if }\mathrm{Re}\left(  z\right)=z_6 \text { and }
\beta= 2/5.\\
\end{cases}
\end{align*}
 \end{lemma}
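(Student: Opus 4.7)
My plan is to follow the template of Lemma \ref{lm_p=4} but with $N=3$ in Lemma \ref{lm_6_integral_ellipsoid}. Setting $\alpha=\mathrm{Re}(z)$, this reduces the problem (after handling $|k|\leq 2$ trivially) to estimating
\[
\int_{|k|\geq 2}(1+\delta|k|)^{-\lambda}\!\!\int_{\sum m_i=k}\prod_{i=1}^{3}|m_i|^{-\alpha}\!\!\int_{\sum n_j=k}\prod_{j=1}^{3}|n_j|^{-\alpha}\bigl(1+\bigl|{\textstyle\sum g(m_i)-\sum g(n_j)}\bigr|\bigr)^{-\beta}d\sigma(n)\,d\sigma(m)\,dk.
\]
For the inner $n$-integration I would first integrate $n_2$ (with $n_3=k-n_1-n_2$) via Lemma \ref{integral} applied with the substitutions $k\mapsto k-n_1$ and $Y\mapsto\sum g(m_i)-g(n_1)$, producing a factor $|k-n_1|^{d-2\alpha}(1+|k-n_1|+|\sum g(m_i)-g(n_1)|)^{-\zeta}\log^{\eta}$. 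Then I integrate $n_1$ against $|n_1|^{-\alpha}$ by splitting into the zones $|n_1|\ll|k|$, $|n_1|\sim|k|$ and $|n_1|\gg|k|$, obtaining a bound $|k|^{2d-3\alpha}(1+|k|+\sum|m_i|)^{-\zeta}$ up to logarithms. Running the analogous argument on the $m$-side (where the $\beta$-cutoff has already been consumed) contributes another $|k|^{2d-3\alpha}$ factor with no further decay from $\beta$.

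The combined bound is $C|k|^{4d-6\alpha}(1+|k|)^{-\zeta}\log^{\eta}$, so the final $k$-integral in $d=2$ is $\int(1+\delta|k|)^{-\lambda}|k|^{8-6\alpha-\zeta}\log^{\eta}dk$, which converges iff $\alpha>(10-\zeta)/6$. Solving this self-consistently with $\zeta=\min\{\beta,\,2-\alpha,\,1/2\}$ splits into two cases: taking $\zeta=\beta$ forces $\alpha=(10-\beta)/6$, which is consistent precisely when $\beta\leq 2/5$; taking $\zeta=2-\alpha$ forces $\alpha=8/5$ with $\zeta=2/5$, consistent when $\beta\geq 2/5$. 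Thus the critical threshold is exactly $z_6=\max\{(10-\beta)/6,\,8/5\}$. At this critical $\alpha$ the $k$-integral contributes a single $\log(1/\delta)$; the additional $\log$ factor $\eta=1$ from Lemma \ref{integral} is triggered precisely when $\alpha=2-\beta$, which happens only at $\beta=2/5$, accounting for the $\log^{2}(1/\delta)$ at that endpoint.

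The main technical obstacle is the second inner integration after the first use of Lemma \ref{integral}: the exponents $-\alpha$ and $d-2\alpha$ on $|n_1|$ and $|k-n_1|$ no longer match, so Lemma \ref{integral} does not apply verbatim. One must either prove a mild unequal-exponent variant of it or carry out a dyadic decomposition in $|n_1|$ followed by rescaled applications of Lemma \ref{integral} in each zone, verifying that the dominant zone is $|n_1|\sim|k|$ so that the cutoff weight $(1+|k|+\sum g(m_i))^{-\zeta}$ survives intact. Equally delicate is the exact bookkeeping of log exponents at the double crossover $\alpha=8/5$, $\beta=2/5$, where both subcases meet and produce the $\log^{2}(1/\delta)$ blow-up.
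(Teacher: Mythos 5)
Your overall architecture matches the paper's proof: reduce via Lemma~\ref{lm_6_integral_ellipsoid} with $N=3$, dispose of $\{|k|\leq 2\}$, apply Lemma~\ref{integral} once to the $n_2$-integral (with $k\mapsto k-n_1$, $Y\mapsto\sum g(m_i)-g(n_1)$), then integrate in $n_1$, in the $m$-variables, and finally in $k$. Your computation of the critical threshold $z_6=\max\{(10-\beta)/6,\,8/5\}$ and your diagnosis that $\eta=1$ only at $\beta=2/5$ (hence the $\log^2$ there) are both correct.

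However, the ``main technical obstacle'' you identify in your last paragraph is not present in the paper's argument, and your plan to keep the cutoff alive through the $n_1$-integration (zone decomposition, unequal-exponent variant of Lemma~\ref{integral}) is unnecessary extra work. The paper's move is to discard the $m$-dependence of the cutoff \emph{immediately} after the single application of Lemma~\ref{integral}, namely to bound
\[
\bigl(1+|k-n_1|+|{\textstyle\sum} g(m_i)-g(n_1)|\bigr)^{-\zeta}\log^{\eta}\bigl(2+|k-n_1|+\cdots\bigr)\ \leq\ C\,(1+|k-n_1|)^{-\zeta}\log^{\eta}(2+|k-n_1|),
\]
which holds up to a constant because $(1+s)^{-\zeta}\log^{\eta}(2+s)$ is bounded and eventually decreasing for $\zeta>0$. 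After this majorization the $n_1$-integral is a \emph{plain convolution of powers},
\[
\int_{\mathbb R^2}|n_1|^{-\alpha}|k-n_1|^{2-2\alpha-\zeta}\log^{\eta}(2+|k-n_1|)\,dn_1\ \leq\ C\,|k|^{4-3\alpha-\zeta}\log^{\eta}(|k|),
\]
which is a basic homogeneity estimate, not an instance of Lemma~\ref{integral}. The $m$-integration is then the standard beta-function identity $\int|m_1|^{-\alpha}|k-m_1|^{2-2\alpha}dm_1=C|k|^{4-3\alpha}$, giving the total power $|k|^{8-6\alpha-\zeta}\log^{\eta}(|k|)$, and the $k$-integral at $\alpha=z_6$ yields $\log^{\eta+1}(1/\delta)$ exactly as you predicted. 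In short: right skeleton and right final numerology, but you stalled on a step that admits a one-line majorization rather than a new lemma; incorporate that majorization and the proof closes cleanly.
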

 
 \proof
Call $\alpha=\mathrm{Re}\left(  z\right)$. By Lemma \ref{lm_6_integral_ellipsoid} with $N=3$, it suffices to estimate
 \begin{align*}
 &  \int_{\mathbb{R}^{2}} \left(  1+\delta| k| \right)^{-\lambda} \underset{|m_1| ,|m_2|,| k-m_1-m_2| >1}{\int\int}|m_1|^{-\alpha}|m_2|^{-\alpha}|k-m_1-m_2|^{-\alpha}\\
 & \times \underset{|n_1| ,|n_2|,| k-n_1-n_2| >1}{\int\int}|n_1|^{-\alpha}|n_2|^{-\alpha}|k-n_1-n_2|^{-\alpha}\\
 &  \times\left(  1+| g(m_1) +g(m_2)+g(k-m_1-m_2) -g(n_1)-g(n_2)-g(k-n_1-n_2) |\right)  ^{-\beta}\\
 & \times dn_1 dn_2 dm_1 dm_2 dk.
 \end{align*}
Split $\mathbb{R}^{2}$ as $\left\{|k| \leq2\right\}  \cup\left\{  | k|
 \geq2\right\}  $. The integral over the disc $\left\{ | k| \leq2\right\}  $ is bounded by
 \begin{align*}
 &  \int_{ | k| \leq2  }\left(\int_{|m_1|>1}| m_1| ^{-\alpha}\int_{\mathbb{R}^{2}}| m_2| ^{-\alpha}| k-m_1-m_2| ^{-\alpha}dm_2dm_1\right)  ^{2}dk\\
 &  =C\int_{ | k| \leq2  }\left(\int_{|m_1|>1}| m_1| ^{-\alpha}| k-m_1| ^{2-2\alpha}dm_1\right)  ^{2}dk\\
 &  =C\int_{  | k| \leq1/2 }\left(\int_{|m_1|>1}| m_1| ^{2-3\alpha}dm_1\right)  ^{2}dk
 +C\int_{1/2\leq | k| \leq2  }|k| ^{8-6\alpha}dk\leq C,
 \end{align*}
 since $\alpha\ge z_6>4/3$.
 Consider now the case $\left\{  | k| \geq2\right\}  $. Apply Lemma \ref{integral} to the integral with respect to $n_2$ with $k$ replaced with $k-n_1$ and
 $Y$ replaced with $g(m_1)+g(m_2)+g(k-m_1-m_2)-g(n_1)$,
  \begin{align*}
 & \underset{|n_1| ,|n_2|,| k-n_1-n_2| >1}{\int\int}|n_1|^{-\alpha}|n_2|^{-\alpha}|k-n_1-n_2|^{-\alpha}\\
&\times \left(  1+| g(m_1) +g(m_2)+g(k-m_1-m_2) -g(n_1)-g(n_2)-g(k-n_1-n_2) |\right)  ^{-\beta}\\
&\times  dn_2dn_1 \\
 &  \leq C\int_{\mathbb R^2}|n_1|^{-\alpha}|k-n_1|^{2-2\alpha} \\
& \times\left(  1+ |k-n_1|+ |g(m_1) +g(m_2)+g(k-m_1-m_2) -g(n_1) |\right)  ^{-\zeta}\\
& \times \log^\eta(2+ |k-n_1|+ |g(m_1) +g(m_2)+g(k-m_1-m_2) -g(n_1))dn_2dn_1\\
   &  \leq \int_{\mathbb R^2}|n_1|^{-\alpha}|k-n_1|^{2-2\alpha} 
   \left(1+|k-n_1| \right)^{-\zeta}
\log^\eta(2+|k-n_1|)
  dn_1\\
 &  \leq C|k|^{4-3\alpha-\zeta}\log^\eta(|k|).
 \end{align*}
 Here $\eta=\eta(2,\alpha,\beta)$ as defined in Lemma \ref{integral}.
 Moreover,
 \begin{align*}
 &\int_{\mathbb{R}^{2}}| m_1| ^{-\alpha}\int_{\mathbb{R}^{2}}| m_2| ^{-\alpha}| k-m_1-m_2| ^{-\alpha}dm_2dm_1\\
 &=C\int_{\mathbb{R}^{2}}| m_1| ^{-\alpha}|k-m_1| ^{2-2\alpha}dm_1=C| k| ^{4-3\alpha}.
 \end{align*}
 Finally, the integral over $\left\{  | k| \geq2\right\}  $ gives
 $$\int_{\substack{  | k| \geq2}  }(1+\delta|k|)^{-\lambda}|k| ^{8-6\alpha-\zeta}\log^\eta(|k|)dk\leq 
 \begin{cases}
 C & \text{ if }\alpha>(10-\zeta)/6,\\
 C\log^{\eta+1}(1/\delta) &  \text{ if }\alpha=(10-\zeta)/6.
 \end{cases}$$
 The result now follows after the observation that $\alpha>(10-\zeta)/6$ if and only if $\alpha>z_6$
 and $\alpha=(10-\zeta)/6$ if and only if $\alpha=z_6$.
  \endproof

 \begin{lemma} \label{lm_Interpolation} 
The notation is as in the previous lemmas.
 \begin{itemize}
 \item[(1)] Let $d=3$. If $\beta<1$ then there exists a constant $C$ such that for every  $\mathrm{Re}\left(  z\right)  \geq 2$, for every $R\ge 1$ and $0<\delta<1/2$,
 \begin{align*}
   \left\{  \int_{\mathbb{R}}  \int_{\mathbb{T}^{d}}| \Phi\left(  \delta,z,r,x\right)  | ^{p}dxd\mu(r-R)\right\}^{1/p}
   \leq
 \begin{cases}
 C & \text{if }p<3+\beta/(2-\beta),\\
  C\log^{1/p}\left(  1/\delta\right)  & \text{if }p=3+\beta/(2-\beta).
 \end{cases}
 \end{align*}
 If $\beta=1$ then there exists a constant $C$ such that for every  $\mathrm{Re}\left(  z\right)  \geq 2$,  for every $R\ge 1$ and $0<\delta<1/2$,
 \begin{align*}
   \left\{  \int_{\mathbb{R}}  \int_{\mathbb{T}^{d}}| \Phi\left(  \delta,z,r,x\right)  | ^{p}dxd\mu(r-R)\right\}^{1/p}
   \leq
 \begin{cases}
 C & \text{if }p<4,\\
  C\log^{3/4}\left(  1/\delta\right)  & \text{if }p=4.
 \end{cases}
 \end{align*}
 If $\beta>1$ then there exists a constant $C$ such that for every  $\mathrm{Re}\left(  z\right)  \geq 2$,  for every $R\ge 1$ and $0<\delta<1/2$,
 \begin{align*}
  \left\{  \int_{\mathbb{R}}  \int_{\mathbb{T}^{d}}| \Phi\left(  \delta,z,r,x\right)  | ^{p}dxd\mu(r-R)\right\}^{1/p}
  \leq
 \begin{cases}
 C & \text{if }p<4,\\
  C\log^{1/2}\left(  1/\delta\right)  & \text{if }p=4.
 \end{cases}
 \end{align*}

 \item[(2)] Let $d\ge4$. If $\beta<1$ then there exists a constant $C$ such that for every  $\mathrm{Re}\left(  z\right)  \geq (d+1)/2$,  for every $R\ge 1$ and $0<\delta<1/2$,
 \begin{align*}
  & \left\{  \int_{\mathbb{R}}  \int_{\mathbb{T}^{d}}| \Phi\left(  \delta,z,r,x\right)  | ^{p}dxd\mu(r-R)\right\}^{1/p}\\
  & \leq
 \begin{cases}
 C & \text{if }p<2(d-\beta)/(d-\beta-1),\\
  C\log^{1/p}\left(  1/\delta\right)  & \text{if }p=2(d-\beta)/(d-\beta-1).
 \end{cases}
 \end{align*}
If $\beta=1$ then there exists a constant $C$ such that for every  $\mathrm{Re}\left(  z\right)  \geq (d+1)/2$,  for every $R\ge 1$ and $0<\delta<1/2$,
 \begin{align*}
   &\left\{  \int_{\mathbb{R}}  \int_{\mathbb{T}^{d}}| \Phi\left(  \delta,z,r,x\right)  | ^{p}dxd\mu(r-R)\right\}^{1/p}\\
   &\leq
 \begin{cases}
 C & \text{if }p<2(d-1)/(d-2),\\
  C\log^{1/2}\left(  1/\delta\right)  & \text{if }p=2(d-1)/(d-2).
 \end{cases}
 \end{align*}
 If $\beta>1$ then there exists a constant $C$ such that for every  $\mathrm{Re}\left(  z\right)  \geq (d+1)/2$,  for every $R\ge 1$ and $0<\delta<1/2$,
 \begin{align*}
  &\left\{  \int_{\mathbb{R}}  \int_{\mathbb{T}^{d}}| \Phi\left(  \delta,z,r,x\right)  | ^{p}dxd\mu(r-R)\right\}^{1/p}\\
  &\leq
 \begin{cases}
 C & \text{if }p<2(d-1)/(d-2),\\
  C\log^{1/p}\left(  1/\delta\right)  & \text{if }p=2(d-1)/(d-2).
 \end{cases}
 \end{align*}
 \end{itemize}
 \end{lemma}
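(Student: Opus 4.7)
The plan is to apply Hadamard's three-lines theorem (equivalently, Stein's complex interpolation for analytic families) to the map $z \mapsto \Phi(\delta, z, \cdot, \cdot)$, viewed as a holomorphic function of $z$ into $L^p\bigl(\mathbb T^d \times \mathrm{supp}(d\mu(\cdot-R))\bigr)$. The defining series for $\Phi$ converges absolutely and uniformly on compact subsets of the $z$-plane thanks to the rapid decay of $\widehat\varphi(\delta n)$, so $\Phi(\delta, z, \cdot, \cdot)$ is entire in $z$. Moreover, on any vertical line $\{\mathrm{Re}(z) = \alpha\}$ the quantity $|n|^{-z-j}$ has modulus $|n|^{-\alpha-j}$ independent of $\mathrm{Im}(z)$, so $\|\Phi(\delta, z, \cdot, \cdot)\|_{L^p}$ depends only on $\alpha$ and the admissibility condition required for complex interpolation is trivially satisfied.

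For the critical exponent $p = p_{\mathrm{crit}}$, I would apply the three-lines inequality in the strip $\{d/2 \leq \mathrm{Re}(z) \leq z_4\}$: Lemma \ref{lm_p=2} bounds $\|\Phi\|_{L^2}$ on the left face by $C\log^{1/2}(1/\delta)$, and Lemma \ref{lm_p=4} bounds $\|\Phi\|_{L^4}$ on the right face by $C\log^{(\eta+1)/4}(1/\delta)$, where $\eta = \eta(d, z_4, \beta)$ is read off from Lemma \ref{integral}. Setting $s = 1/(2(z_4 - d/2))$ puts the interpolation point on the target line $\mathrm{Re}(z) = (d+1)/2$, and a direct computation will show that the interpolated exponent $p_\theta$ defined by $1/p_\theta = (1-s)/2 + s/4$ agrees with the claimed $p_{\mathrm{crit}}$ in each of the cases $\beta < 1$, $\beta = 1$, $\beta > 1$. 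The interpolated log exponent $(1-s)/2 + s(\eta+1)/4$ would then be matched to the stated value in each subcase: reading Lemma \ref{integral} at $\alpha = z_4$ gives $\eta = 0$ when $\beta \neq 1$, $\eta = 1$ when $\beta = 1$ with $d \geq 4$ or when $\beta > 1$ with $d = 3$, and $\eta = 2$ when $\beta = 1$ with $d = 3$. These produce respectively $1/p_{\mathrm{crit}}$, $1/2$, and (in the $d = 3$ boundary cases, where the strip collapses and Lemma \ref{lm_p=4} is used directly at the target line) $3/4$ or $1/2$, matching the statement.

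For the subcritical exponents $p < p_{\mathrm{crit}}$, I would shift both endpoints of the interpolation strip up by a small $\epsilon > 0$, so that Lemmas \ref{lm_p=2} and \ref{lm_p=4} apply under strict inequality and the endpoint bounds become independent of $\delta$. Three-lines at $\mathrm{Re}(z) = (d+1)/2$ then yields a pure constant bound in $L^{p'(\epsilon)}$ for some $p'(\epsilon) < p_{\mathrm{crit}}$, with $p'(\epsilon) \to p_{\mathrm{crit}}^-$ as $\epsilon \to 0^+$. Given any $p < p_{\mathrm{crit}}$, I would pick $\epsilon$ so that $p < p'(\epsilon)$ and invoke the elementary $L^p$ inclusion on the finite measure space $\mathbb T^d \times \mathrm{supp}(d\mu(\cdot - R))$ to conclude $\|\Phi\|_{L^p} \leq C\|\Phi\|_{L^{p'(\epsilon)}} \leq C$.

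The main obstacle will be the bookkeeping for the log exponents across the many subcases, since it requires extracting the correct value of $\eta$ from Lemma \ref{integral} and checking by direct algebra that the interpolation formula reproduces each claimed log power. A secondary but delicate point is the boundary case $d = 3$ with $\beta \geq 1$, where $z_4 = (d+1)/2$ and the interpolation strip degenerates to a single line; there Lemma \ref{lm_p=4} must be used directly at the target line for the critical $p = 4$, while the subcritical range still follows by shifting the right endpoint strictly above $(d+1)/2$ and running the interpolation argument as above.
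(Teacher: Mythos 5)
Your proposal is correct and follows essentially the same strategy as the paper: complex interpolation (via the three-lines / Stein analytic-family theorem) between the $L^2$ estimate at $\mathrm{Re}(z)=z_2=d/2$ from Lemma~\ref{lm_p=2} and the $L^4$ estimate at $\mathrm{Re}(z)=z_4$ from Lemma~\ref{lm_p=4}, evaluated at the physical line $\mathrm{Re}(z)=(d+1)/2$; the paper likewise shifts both endpoints by $\varepsilon\geq0$, taking $\varepsilon=0$ for the critical exponent and $\varepsilon>0$ for the subcritical range, and tracks the logarithmic gain through the formula $1/p+\eta/(2d-2\min\{\beta,1\})$. Your treatment of the $d=3$, $\beta\geq1$ case (where $z_4=(d+1)/2$ so the target line hits the $L^4$ endpoint and one reads Lemma~\ref{lm_p=4} directly) and your use of $L^p$ inclusion on a finite measure space for the subcritical range are minor cosmetic variations of the same argument.
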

 
 \begin{proof}
 It is enough to prove the result for $z=(d+1)/2$.
 The Lemma follows 
 via complex interpolation. For the definition of the complex interpolation method and the complex interpolation of $L^p  $ spaces, see for example \cite[Chapters 4 and 5]{BL}. Here we recall the relevant result: Let $\mathbb{X}$ be a measure space, $1\leq a<b\leq+\infty$, $-\infty<A<B<+\infty$, and let $\Phi\left(  z\right)  $ be a function with values in $L^{a}\left(  \mathbb{X}\right)  +L^{b}\left(  \mathbb{X}\right)  $, continuous and bounded on the closed strip $\left\{  A\leq\mathrm{Re}\left(  z\right)  \leq B\right\}  $ and analytic on the open strip $\left\{A<\mathrm{Re}\left(  z\right)  <B\right\}  $. Assume that there exist constants $H$ and $K$ such that for every $-\infty<t<+\infty$,
 $$
 \begin{cases}
 \left\Vert \Phi\left(  A+it\right)  \right\Vert _{L^{a}\left(  \mathbb{X}\right)  }\leq H,\\
 \left\Vert \Phi\left(  B+it\right)  \right\Vert _{L^{b}\left(  \mathbb{X}\right)  }\leq K.
 \end{cases}
 $$
 If $1/p=\left(  1-\vartheta\right)  /a+\vartheta/b$, with $0<\vartheta<1$, then
 $$\left\Vert \Phi\left(  \left(  1-\vartheta\right)  A+\vartheta B\right)\right\Vert _{L^{p}\left(  \mathbb{X}\right)  }\leq H^{1-\vartheta}K^{\vartheta}.$$
In our case, the analytic function is $\Phi\left(  \delta,z,r,x\right)  $, the measure space is $\mathbb{R\times T}^{d}$ with measure $d\mu(r-R)dx$, $a=2$, $b=4$, $A=z_2+\varepsilon$, $B=z_4+\varepsilon$, with $\varepsilon\geq0$. The norms $H$ and $K$ are given in Lemma \ref{lm_p=2} and Lemma \ref{lm_p=4}. 
 Set
 $$\frac{d+1}{2}=\left(  1-\vartheta\right)  A+\vartheta B.$$
 This gives
 $$\vartheta=\frac{d+1-2A}{2B-2A},$$
 and
 $$p=\frac{2(d-\min\{\beta,1\}) }{d-1-\min\{\beta,1\}+2\varepsilon}.$$
 When $\varepsilon>0$ and $p<{2(d-\min\{\beta,1\}) }/{(d-1-\min\{\beta,1\})}  $,
 $$\left\{  \int_{\mathbb{R}} \int_{\mathbb{T}^{d}}| \Phi\left(  \delta,\left(  d+1\right)  /2,r,x\right)  |^{p}dxd\mu(r-R)\right\}  ^{1/p}\leq C.$$
 When $\varepsilon=0$ and $p={2(d-\min\{\beta,1\}) }/{(d-1-\min\{\beta,1\})}   $,
 $$\left\{  \int_{\mathbb{R}}  \int_{\mathbb{T}^{d}}| \Phi\left(  \delta,\left(  d+1\right)  /2,r,x\right)  |^{p}dxd\mu(r-R)\right\}  ^{1/p}\leq C\log^{1/p+\eta/(2d-2\min\{\beta,1\})}\left(  1/\delta\right)  ,$$
 where $\eta=\eta(d,z_4,\beta)$ as in Lemma \ref{integral}.
 \end{proof}

  \begin{lemma} \label{lm_Interpolation_d=2} 
The notation is as in the previous lemmas and let $d=2$. 

\noindent If $0\leq\beta<2/5$ then there exist a constant $C$ such that for every  $\mathrm{Re}\left(  z\right)  \geq 3/2$,  for every $R\ge 1$ and $0<\delta<1/2$,
 \begin{align*}
   \left\{  \int_{\mathbb{R}}  \int_{\mathbb{T}^{2}}| \Phi\left(  \delta,z,r,x\right)  | ^{p}dxd\mu(r-R)\right\}^{1/p}
   \leq
 \begin{cases}
 C & \text{if }p<4+2\beta,\\
  C\log^{1/p}\left(  1/\delta\right)  & \text{if }p=4+2\beta.
 \end{cases}
 \end{align*}
 If $\beta=2/5$ then there exist a constant $C$ such that for every  $\mathrm{Re}\left(  z\right)  \geq 3/2$,  for every $R\ge 1$ and $0<\delta<1/2$,
 \begin{align*}
   \left\{  \int_{\mathbb{R}}  \int_{\mathbb{T}^{2}}| \Phi\left(  \delta,z,r,x\right)  | ^{p}dxd\mu(r-R)\right\}^{1/p}
   \leq
 \begin{cases}
 C & \text{if }p<4+2\beta,\\
  C\log^{1/p+1/12}\left(  1/\delta\right)  & \text{if }p=4+2\beta.
 \end{cases}
 \end{align*}
 If $2/5<\beta<1/2$ then there exist a constant $C$ such that for every  $\mathrm{Re}\left(  z\right)  \geq 3/2$,  for every $R\ge 1$ and $0<\delta<1/2$,
 \begin{align*}
  \left\{  \int_{\mathbb{R}}  \int_{\mathbb{T}^{2}}| \Phi\left(  \delta,z,r,x\right)  | ^{p}dxd\mu(r-R)\right\}^{1/p}
  \leq
 \begin{cases}
 C & \text{if }p<4+10\beta/(3+5\beta),\\
  C\log^{1/p}\left(  1/\delta\right)  & \text{if }p=4+10\beta/(3+5\beta).
 \end{cases}
 \end{align*}
  If $\beta=1/2$ then there exist a constant $C$ such that for every  $\mathrm{Re}\left(  z\right)  \geq 3/2$,  for every $R\ge 1$ and $0<\delta<1/2$,
 \begin{align*}
  \left\{  \int_{\mathbb{R}}  \int_{\mathbb{T}^{2}}| \Phi\left(  \delta,z,r,x\right)  | ^{p}dxd\mu(r-R)\right\}^{1/p}
  \leq
 \begin{cases}
 C & \text{if }p<4+10/11,\\
  C\log^{1/p+1/9}\left(  1/\delta\right)  & \text{if }p=4+10/11.
 \end{cases}
 \end{align*}
  If $\beta>1/2$ then there exist a constant $C$ such that for every  $\mathrm{Re}\left(  z\right)  \geq 3/2$,  for every $R\ge 1$ and $0<\delta<1/2$,
 \begin{align*}
  \left\{  \int_{\mathbb{R}}  \int_{\mathbb{T}^{2}}| \Phi\left(  \delta,z,r,x\right)  | ^{p}dxd\mu(r-R)\right\}^{1/p}
  \leq
 \begin{cases}
 C & \text{if }p<4+10/11,\\
  C\log^{1/p}\left(  1/\delta\right)  & \text{if }p=4+10/11.
 \end{cases}
 \end{align*}

 \end{lemma}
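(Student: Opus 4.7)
The plan is to adapt the complex interpolation strategy of Lemma \ref{lm_Interpolation}, but in dimension $d=2$ to interpolate between the $L^{4}$ bound of Lemma \ref{lm_p=4} and the $L^{6}$ bound of Lemma \ref{lm_p=6}, rather than between $L^{2}$ and $L^{4}$. Since the majorizing integral produced by Lemma \ref{lm_6_integral_ellipsoid} is a decreasing function of $\mathrm{Re}(z)$, it suffices to prove each estimate at $z=3/2=(d+1)/2$; and since the Fourier expansion of $\Phi(\delta,z,r,x)$ is absolutely convergent thanks to the mollifier $\widehat\varphi(\delta n)$, the map $z\mapsto\Phi(\delta,z,\cdot,\cdot)$ is analytic and uniformly bounded on every vertical strip with values in $L^{4}+L^{6}$ of the product measure $d\mu(r-R)\,dx$, so the standard complex interpolation theorem applies.

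For each range of $\beta$ I would fix $\varepsilon\geq 0$, set $A=z_{4}+\varepsilon$ and $B=z_{6}+\varepsilon$ with $z_{4},z_{6}$ as in Lemmas \ref{lm_p=4} and \ref{lm_p=6}, and choose $\theta\in(0,1)$ so that $(1-\theta)A+\theta B=3/2$. Interpolation then yields
\[
\|\Phi(\delta,3/2,\cdot,\cdot)\|_{L^{p}}\leq \|\Phi(\delta,A,\cdot,\cdot)\|_{L^{4}}^{\,1-\theta}\,\|\Phi(\delta,B,\cdot,\cdot)\|_{L^{6}}^{\,\theta},
\]
with $1/p=(1-\theta)/4+\theta/6$. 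Taking $\varepsilon>0$ makes both endpoint norms uniformly bounded and produces $p<p_{0}$ with constant $C$, while taking $\varepsilon=0$ hits the endpoint $p=p_{0}$ with a precise power of $\log(1/\delta)$. For $0\leq\beta\leq 2/5$ the choices $z_{4}=(6-\beta)/4$, $z_{6}=(10-\beta)/6$ give $\theta_{0}=3\beta/(2+\beta)$ and $p_{0}=4+2\beta$; for $2/5\leq\beta\leq 1/2$ the exponent $z_{6}$ saturates at $8/5$ while $z_{4}=(6-\beta)/4$, producing $\theta_{0}=5\beta/(2+5\beta)$ and $p_{0}=4+10\beta/(3+5\beta)$; and for $\beta\geq 1/2$ both exponents stabilize at $z_{4}=11/8$ and $z_{6}=8/5$, giving $\theta_{0}=5/9$ and $p_{0}=4+10/11$.

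The delicate step is tracking the logarithmic losses at $\varepsilon=0$. Lemma \ref{lm_p=4} supplies $\|\Phi\|_{L^{4}}\leq C\log^{(\eta+1)/4}(1/\delta)$ with $\eta=\eta(2,z_{4},\beta)$ from Lemma \ref{integral}; inspection of the definition of $\eta$ shows that for the values of $z_{4}$ used above $\eta=1$ only when $(\beta,z_{4})=(1/2,11/8)$ and $\eta=0$ in all other subcases (in particular the identity $(6-\beta)/4=2-\beta$ fails for $0<\beta<1/2$). Lemma \ref{lm_p=6} supplies $\|\Phi\|_{L^{6}}\leq C\log^{c/6}(1/\delta)$ with $c=2$ precisely when $\beta=2/5$ and $c=1$ otherwise. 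Feeding these into the interpolation inequality gives the exponent
\[
\frac{(1-\theta_{0})(\eta+1)}{4}+\frac{\theta_{0}c}{6}=\frac{1}{p_{0}}+\frac{(1-\theta_{0})\eta}{4}+\frac{\theta_{0}(c-1)}{6},
\]
which evaluates to $1/p$ generically, to $1/p+1/12$ at $\beta=2/5$ (the jump of $c$ combined with $\theta_{0}=1/2$), and to $1/p+1/9$ at $\beta=1/2$ (the jump of $\eta$ combined with $\theta_{0}=5/9$). The main obstacle is therefore the arithmetic bookkeeping of these endpoints and log jumps, and the verification that the two jumps never coincide at a single choice of endpoints.
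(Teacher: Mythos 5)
Your proposal is correct and follows essentially the same route as the paper: complex interpolation between $L^4$ (Lemma \ref{lm_p=4}) and $L^6$ (Lemma \ref{lm_p=6}) with endpoints $A=z_4+\varepsilon$, $B=z_6+\varepsilon$, and the same bookkeeping of $\vartheta_0$, $p_0$, and the log exponents $\eta$ and $c$ (the paper's $\omega$) at $\varepsilon=0$; the only point worth flagging is that at $\beta=0$ one has $\vartheta_0=0$ so interpolation degenerates and one should simply invoke Lemma \ref{lm_p=4} directly at $\mathrm{Re}(z)=z_4=3/2$, as the paper does.
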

 
 \begin{proof}
 Again, it is enough to prove the result for $z=3/2$. The case $\beta=0$ is contained in Lemma \ref{lm_p=4}. If $\beta>0$
the proof follows by complex interpolation with $a=4$, $b=6$, $A=z_4+\varepsilon$, $B=z_6+\varepsilon$, with $\varepsilon\geq0$. The norms $H$ and $K$ are given in Lemma \ref{lm_p=4} and Lemma \ref{lm_p=6}. 
 Set
 $$\frac{3}{2}=\left(  1-\vartheta\right)  A+\vartheta B.$$
 This gives
 $$\vartheta=\frac{3-2A}{2B-2A},$$
 and
 $${p}=\frac{24(z_6-z_4)}{6z_6-4z_4-3+2\varepsilon}.$$
 When $\varepsilon>0$ and $p<{24(z_6-z_4)}/{(6z_6-4z_4-3)} $,
 $$\left\{  \int_{\mathbb{R}} \int_{\mathbb{T}^{2}}| \Phi\left(  \delta,3/2,x\right)  |^{p}dxd\mu(r-R)\right\}  ^{1/p}\leq C.$$
 When $\varepsilon=0$ and $p={24(z_6-z_4)}/{(6z_6-4z_4-3)} $,
 $$\left\{  \int_{\mathbb{R}}  \int_{\mathbb{T}^{d}}| \Phi\left(  \delta,3  /2,r,x\right)  |^{p}dxd\mu(r-R)\right\}  ^{1/p}\leq C\log^{1/p+\eta(1-\vartheta)/4+(\omega-1)\vartheta/6}\left(  1/\delta\right)  ,$$
 where $\eta=\eta(2,z_4,\beta)$ as in Lemma \ref{integral} and $\omega$ is the exponent of the logarithm in Lemma \ref{lm_p=6}, that is
 $\omega=1$ if $\beta\neq2/5$ and $\omega=2$ if $\beta=2/5$.
 \end{proof}
 
 \proof(of the Theorems \ref{thm_d=2} and \ref{thm_d>2}) By Remark \ref{r1}, one has
 \begin{align*}
 & \left\{  \int_{\mathbb{R}}  \int_{\mathbb{T}^{d}}| r^{-{(d-1)}/{2}}\mathcal{D}\left(  \Omega,r,x\right)  |^{p}dxd\mu(r-R)\right\}  ^{1/p}\\
  =& \left\{  \int_{\mathbb{R}}  \int_{\mathbb{T}^{d}}| r^{-{(d-1)}/{2}}\mathcal{D}_{\delta}\left(  \Omega,r\pm\delta,x\right)  |^{p}dxd\mu(r-R)\right\}  ^{1/p}\\
  &+C\left\{  \int_{\mathbb{R}}  \int_{\mathbb{T}^{d}}| r^{(d-1)/2}\delta  |^{p}dxd\mu(r-R)\right\}  ^{1/p}\\
  \end{align*}
 If  $\delta=R^{-\left(  d-1\right)  /2}$ the last term is bounded. On the other hand, by Lemma \ref{Asymptotic Discrepancy}
  \begin{align*}
  & \left\{  \int_{\mathbb{R}}  \int_{\mathbb{T}^{d}}| r^{-{(d-1)}/{2}}\mathcal{D}_{\delta}\left(  \Omega,r\pm\delta,x\right)  |^{p}dxd\mu(r-R)\right\}  ^{1/p}\\
  \le& \left\{  \int_{\mathbb{R}}  \int_{\mathbb{T}^{d}}| \Phi(\delta,(d+1)/2,r\pm\delta,x)  |^{p}dxd\mu(r-R)\right\}  ^{1/p}\\
 & +   \left\{  \int_{\mathbb{R}}  \int_{\mathbb{T}^{d}}|\mathcal{R}_h(\delta,r\pm\delta,x)  |^{p}dxd\mu(r-R)\right\}  ^{1/p}.\\
    \end{align*}
    
 If $h\geq\left(  d-3\right)  /2$ then the last term is bounded, while the first term can be written as
 \[
 \left\{  \int_{\mathbb{R}}  \int_{\mathbb{T}^{d}}| \Phi(\delta,(d+1)/2,r,x)  |^{p}dxd\mu(r-(R\pm\delta))\right\}  ^{1/p}.
 \]
The theorem now follows from the two previous lemmas, with $R$ replaced by $R\pm\delta$.
 \endproof
 
 \section{The case of the ellipse}
 Here we assume $d=2$ and $\Omega=E=\{x\in\mathbb R^2:|M^{-1}x|\le 1\}$, where $M$ is a non singular 
 $2\times 2$ matrix. In this case the support function is $g(x)=|M^Tx|$. 
By the change of variable $M^Tx=y$ applied to all variables $n_1,\ldots,n_N,\,m_1,\ldots,m_N$, in this case Lemma \ref{lm_6_integral_ellipsoid} can be restated as
\begin{align*}
& \int_{\mathbb{R}} \int_{\mathbb{T}^{2}}  |\Phi(\delta,z,r,x)|^{2N}dxd\mu(r-R) \\
&\leq C\int_{\mathbb{R}^{2}} (1+\delta|k|)^{-\lambda}\\
&\times\int\limits_{\substack{m_1,\ldots,m_{N}\in\mathbb{R}^{2}\\ |m_1|, \ldots,|m_{N}| >1\\ m_1+\dots+m_{N}=k }} (1+\delta|m_1|)^{-\lambda}\dots(1+\delta|m_N|)^{-\lambda}|m_1|^{-\mathrm{Re}(z)}\dots|m_{N}|^{-\mathrm{Re}(z)}\\
&\times \int\limits_{\substack{n_1,\ldots,n_{N}\in\mathbb{R}^{2}\\ |n_1|, \ldots,|n_{N}| >1\\ n_1+\dots+n_{N}=k }}(1+\delta|n_1|)^{-\lambda}\dots(1+\delta|n_N|)^{-\lambda} |n_1|^{-\mathrm{Re}(z)}\dots|n_{N}|^{-\mathrm{Re}(z)} \\
&\times \left(1+| |m_1|+\dots+|m_{N}|-|n_1|-\dots-|n_{N}||\right)^{-\beta}\\
&\times d\sigma(n_1,\dots,n_{N}) d\sigma(m_1,\dots,m_{N}) dk.
\end{align*}

\begin{lemma}\label{lm_crucial} 
\begin{itemize}
\item[(1)] If $\beta\ge0$ and $0<2-\alpha<\beta$, there exists $C$ such that for every $-\infty<X<+\infty$,
$$\int_{0}^{+\infty}\left(  1+| X-\tau| \right)  ^{-\beta}\tau^{1-\alpha}d\tau\leq
C(1+|X|)^{2-\alpha-\min\{\beta,1\}}\log^{\delta_{1,\beta}}(2+|X|).
$$
\item[(2)] If $\alpha<2$, then for every $\beta$ there exists $C$ such that for every $-\infty<X<+\infty$,
$$\int_{0}^{1}\left(  1+| X-\tau| \right)  ^{-\beta}\tau^{1-\alpha}(-\log\left(\tau\right))  d\tau\leq C\left(  1+| X| \right)  ^{-\beta}.$$
\item[(3)] If $0\leq\beta<1$ and $2-\alpha\ge\beta$, then there exists $C$ such that for every $-\infty< X<+\infty$ and $2\leq T<+\infty$,
$$\int_{0}^{T}\left(  1+| X-\tau| \right)  ^{-\beta}\tau^{1-\alpha}d\tau\leq CT^{2-\alpha-\beta}\log^{\delta_{2-\alpha,\beta}}\left(  T\right)  .$$
\item[(4)] If $0\leq\beta<1$ and $\alpha>1$, there exists $C$ such that for every $-\infty< X<+\infty$ and $2\leq T<+\infty$,
$$\int_{T}^{+\infty}\left(  1+| X-\tau| \right)  ^{-\beta}\tau^{1-2\alpha}d\tau\leq CT^{2-2\alpha-\beta}.$$
\end{itemize}
\end{lemma}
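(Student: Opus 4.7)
The plan is to prove all four parts by a uniform splitting strategy: partition the $\tau$-range into pieces determined by the relative sizes of $\tau$ and $|X|$, and on each piece replace one of the two factors $(1+|X-\tau|)^{-\beta}$ or $\tau^{1-\alpha}$ by its maximum. The three canonical regions are
\[
\{\tau\le|X|/2\},\quad\{|X|/2\le\tau\le 2|X|\},\quad\{\tau\ge 2|X|\},
\]
intersected with the appropriate domain ($[0,\infty)$, $[0,1]$, $[0,T]$, or $[T,\infty)$).

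For Part (1) I would first dispose of $|X|\le 1$, where the hypothesis $\beta>2-\alpha$ makes $\int_0^{\infty}(1+\tau)^{-\beta}\tau^{1-\alpha}d\tau$ absolutely convergent and the right-hand side is bounded below by a positive constant. For $|X|\ge 1$, on $\{\tau\le|X|/2\}$ one has $|X-\tau|\ge|X|/2$, contributing $C|X|^{-\beta}\cdot|X|^{2-\alpha}=C|X|^{2-\alpha-\beta}$. On $\{|X|/2\le\tau\le 2|X|\}$ one uses $\tau^{1-\alpha}\le C|X|^{1-\alpha}$ together with
\[
\int_{|X|/2}^{2|X|}(1+|X-\tau|)^{-\beta}d\tau\le C\int_0^{|X|}(1+u)^{-\beta}du,
\]
which equals $C$, $C\log|X|$, or $C|X|^{1-\beta}$ according to whether $\beta>1$, $\beta=1$, or $\beta<1$; this yields $C|X|^{1-\alpha}$, $C|X|^{1-\alpha}\log|X|$, or $C|X|^{2-\alpha-\beta}$ respectively. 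Finally on $\{\tau\ge 2|X|\}$ the bound $|X-\tau|\ge\tau/2$ gives integrand $\le C\tau^{1-\alpha-\beta}$, whose integral is $C|X|^{2-\alpha-\beta}$ because of $\beta>2-\alpha$. Summing the three contributions reproduces exactly $(1+|X|)^{2-\alpha-\min\{\beta,1\}}\log^{\delta_{1,\beta}}(2+|X|)$.

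Parts (2), (3), (4) are variants of the same pattern. For Part (2) the shortness of the domain $[0,1]$ makes $1+|X-\tau|\sim 1+|X|$, so $(1+|X-\tau|)^{-\beta}$ factors out and the remaining integral $\int_0^1\tau^{1-\alpha}(-\log\tau)d\tau$ is finite because $\alpha<2$. For Part (3) the same three-region split is performed, but the outermost region is truncated at $T$, so the tail now contributes $\int_{2|X|}^{T}\tau^{1-\alpha-\beta}d\tau$, giving $CT^{2-\alpha-\beta}$ when $2-\alpha>\beta$ and $C\log T$ when $2-\alpha=\beta$; the case $|X|\ge 2T$ is handled directly by $(1+|X-\tau|)^{-\beta}\le CT^{-\beta}$, which yields $CT^{-\beta}\int_0^{T}\tau^{1-\alpha}d\tau=CT^{2-\alpha-\beta}$. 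For Part (4) the weight $\tau^{1-2\alpha}$ is already integrable at infinity because $\alpha>1$, so one only splits on whether $|X|\le T/2$, $T/2\le|X|\le 2T$, or $|X|\ge 2T$; in each subregion exactly one factor dominates and, using $\beta<1$ to evaluate $\int_0^{|X|}(1+u)^{-\beta}du\le C|X|^{1-\beta}$, every subregion produces $CT^{2-2\alpha-\beta}$.

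The main technical point, rather than a genuine obstacle, is the careful bookkeeping of the logarithmic corrections that appear precisely when $\beta=1$ (in Parts (1) and (3)) or when $\beta=2-\alpha$ (in Part (3)): these are exactly the Kronecker-delta cases in the statement, corresponding to the transition between polynomial integrability and logarithmic divergence in $\int_0^{|X|}(1+u)^{-\beta}du$ and $\int^{T}\tau^{1-\alpha-\beta}d\tau$ respectively. Everything else is a routine sum of three regional contributions, with the monotonicity of the integrals in $\beta$ (as already exploited in Lemma \ref{integral}) available to handle boundary exponents by a small perturbation argument if needed.
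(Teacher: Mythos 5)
Your proposal is correct and follows essentially the same strategy as the paper: split the integration range at $|X|/2$ and $2|X|$ (or, equivalently, split into cases according to the relative sizes of $X$ and $T$), bound the dominated factor by its sup on each region, and track the borderline logarithmic cases separately. The only differences are cosmetic: the paper handles $X\le 0$ in Part (1) with a two-region split at $1+|X|$ rather than the three-region split you describe, and in Parts (3) and (4) it phrases the case analysis as $T$ versus $X$ rather than $\tau$ versus $|X|$, but the resulting computations are identical.
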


\proof (1) If $X\leq0$, then
\begin{align*}
&\int_{0}^{+\infty}\left(  1+| X-\tau| \right)  ^{-\beta}\tau^{1-\alpha}d\tau\leq\left(  1+| X| \right)  ^{-\beta}\int_{0}^{1+| X| }\tau^{1-\alpha}d\tau+\int_{1+| X| }^{+\infty}\tau^{1-\alpha-\beta}d\tau\\
&\leq C\left(  1+| X| \right)^{2-\alpha-\beta}.
\end{align*}
If $0\leq X\leq1$, then
$$\int_{0}^{+\infty}\left(  1+| X-\tau| \right)  ^{-\beta}\tau^{1-\alpha}d\tau\leq\int_{0}^{2}\tau^{1-\alpha}d\tau+\int_{2}^{+\infty}\tau^{1-\alpha-\beta}d\tau\leq C.$$
If $X\geq1$, then
\begin{align*}
&\int_{0}^{+\infty}\left(  1+| X-\tau| \right)  ^{-\beta}\tau^{1-\alpha}d\tau\\
&  \leq CX^{-\beta}\int_{0}^{X/2}\tau^{1-\alpha}d\tau+C  X^{1-\alpha}\int_{X/2}^{2X}\left(  1+| X-\tau| \right)  ^{-\beta}d\tau+C\int_{2X}^{+\infty}\tau^{1-\alpha-\beta}d\tau\\
&  \leq
\begin{cases}
CX^{2-\alpha-\beta} & \text{if }0\leq\beta<1,\\
CX^{1-\alpha}\log\left(  1+X\right)  & \text{if }\beta=1,\\
CX^{1-\alpha} & \text{if }\beta>1.
\end{cases}
\end{align*}
(2) It suffices to observe that there exists $C$ such that for every $X$,
$$\max_{0\leq\tau\leq1}\left\{  \left(  1+| X-\tau| \right)^{-\beta}\right\}  \leq C\left(  1+| X| \right)  ^{-\beta}.$$
(3) Let $X\ge0$. If $T\leq X/2$, then
$$\int_{0}^{T}\left(  1+| X-\tau| \right)  ^{-\beta}\tau^{1-\alpha}d\tau
\leq C X^{-\beta} \int_{0}^{T}\tau^{1-\alpha}d\tau\leq C  X^{-\beta}{T}^{2-\alpha}
.$$
If $X/2\leq T\leq2X$, then
\begin{align*}
&\int_{0}^{T}\left(  1+| X-\tau| \right)  ^{-\beta}\tau^{1-\alpha}d\tau\\
&\leq CX^{-\beta}\int_{0}^{X/2}\tau^{1-\alpha}d\tau+CX^{1-\alpha}\int_{X/2}^{2X}\left(  1+| X-\tau| \right)  ^{-\beta}d\tau\\
&\leq CX^{2-\alpha-\beta}.
\end{align*}
If $T\geq2X$, then
\begin{align*}
&\int_{0}^{T}\left(  1+| X-\tau| \right)  ^{-\beta}\tau^{1-\alpha}d\tau\\
&  \leq CX^{-\beta}\int_{0}^{X/2}\tau^{1-\alpha}d\tau+CX^{1-\alpha}\int_{X/2}^{2X}\left(  1+| X-\tau| \right)  ^{-\beta}d\tau+C\int_{2X}^{T}\tau^{1-\alpha-\beta}d\tau\\
&  \leq CT^{2-\alpha-\beta}\log^{\delta_{2-\alpha,\beta}}\left(  T\right)  .
\end{align*}
If $X<0$ then simply observe that
\[
\int_{0}^{T}\left(  1+| X-\tau| \right)  ^{-\beta}\tau^{1-\alpha}d\tau
\leq
\int_{0}^{T}\left(  1+| |X|-\tau| \right)  ^{-\beta}\tau^{1-\alpha}d\tau.
\]
(4) As before, it suffices to show the result for $X\ge0$. If $T\leq X/2$, then
\begin{align*}
&\int_{T}^{+\infty}\left(  1+| X-\tau| \right)  ^{-\beta}\tau^{1-2\alpha}d\tau\\
&  \leq CX^{-\beta}\int_{T}^{X/2}\tau^{1-2\alpha}d\tau+CX^{1-2\alpha}\int_{X/2}^{2X}\left(  1+| X-\tau| \right)  ^{-\beta}d\tau+C\int_{2X}^{+\infty}\tau^{1-2\alpha-\beta}d\tau\\
&  \leq CX^{-\beta}T^{2-2\alpha}+CX^{2-2\alpha-\beta}\leq CT^{2-2\alpha-\beta}.
\end{align*}
If $X/2\leq T\leq2X$, then
\begin{align*}
&\int_{T}^{+\infty}\left(  1+| X-\tau| \right)  ^{-\beta}\tau^{1-2\alpha}d\tau\\
&  \leq T^{1-2\alpha}\int_{X/2}^{2X}\left(  1+| X-\tau| \right)  ^{-\beta}d\tau+C\int_{2X}^{+\infty}\tau^{1-2\alpha-\beta}d\tau\leq CT^{2-2\alpha-\beta}.
\end{align*}
If $T\geq2X$, then
$$\int_{T}^{+\infty}\left(  1+| X-\tau| \right)  ^{-\beta}\tau^{1-2\alpha}d\tau\leq C\int_{T}^{+\infty}\tau^{1-2\alpha-\beta}d\tau\leq CT^{2-2\alpha-\beta}.$$
\endproof

\begin{lemma}
\label{LM_Integrale}
(1) Let $3/2\leq\alpha<2$ and $\beta>2-\alpha$.
Then there exists $C$\ such that for every
$k\in\mathbb{R}^{2} $ with $|k|\ge2$ and for every $-\infty<Y<+\infty$,
\begin{align*}
&
{\int_{\mathbb{R}^{2}}}
\left\vert x\right\vert ^{-\alpha}\left\vert k-x\right\vert ^{-\alpha}\left(
1+\left\vert Y-|  x|  -|  k-x| \right\vert \right)
^{-\beta}dx\\
&  \leq C
|k|^{-\alpha}\log^{\delta_{3/2,\alpha}}(|k|)(1+|Y-|k||)^{2-\alpha-\min\{1,\beta\}}\log^{\delta_{1,\beta}}(2+|Y-|k||).
\end{align*}
(2) Let $3/2<\alpha<2$ and $0\leq\beta\leq2-\alpha$. 
Then there exists $C$\ such that for every
$k\in\mathbb{R}^{2} $ with $|k|\ge2$ and for every $-\infty<Y<+\infty$,
\begin{align*}
{\int_{\mathbb{R}^{2}}}
\left\vert x\right\vert ^{-\alpha}\left\vert k-x\right\vert ^{-\alpha}\left(
1+\left\vert Y-|  x|  -|  k-x| \right\vert \right)
^{-\beta}dx  \leq C
|k|^{2-2\alpha-\beta}\log^{\delta_{2-\alpha,\beta}}(|k|).
\end{align*}
(3) Let $\alpha=3/2$ and $\beta=1/2$. 
Then there exists $C$\ such that for every
$k\in\mathbb{R}^{2} $ with $|k|\ge2$ and for every $-\infty<Y<+\infty$,
\begin{align*}
{\int_{\mathbb{R}^{2}}}
\left\vert x\right\vert ^{-\alpha}\left\vert k-x\right\vert ^{-\alpha}\left(
1+\left\vert Y-|  x|  -|  k-x| \right\vert \right)
^{-\beta}dx  \leq C
|k|^{-\alpha}\log^2(|k|).
\end{align*}
(4) Let $3/4<\alpha<3/2$ and $\beta\geq0$. 
Then there exists $C$\ such that for every
$k\in\mathbb{R}^{2} $ with $|k|\ge2$ and for every $-\infty<Y<+\infty$,
\begin{align*}
&{\int_{\mathbb{R}^{2}}}
\left\vert x\right\vert ^{-\alpha}\left\vert k-x\right\vert ^{-\alpha}\left(
1+\left\vert Y-|  x|  -|  k-x| \right\vert \right)
^{-\beta}dx  \\
\leq& 
\begin{cases}
C|k|^{2-2\alpha-\beta} & \text { if } 0\leq \beta \leq 1/2,\\
C|k|^{-2\alpha+3/2}(1+|Y-|k||)^{1/2-\min\{1,\beta\}}\log^{\delta_{1,\beta}}(2+|Y-|k||) & \text { if } 1/2< \beta.\\
\end{cases}
\end{align*}
(5) Let $\alpha>1$. Then for every $k\in \mathbb R^2$
\[
\int_{|x|,|x-k|>1}|x|^{-\alpha}|x-k|^{-\alpha}dx\le(2\alpha-2)^{-1}.
\]
\end{lemma}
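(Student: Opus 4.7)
The key observation is that, after the change of variables, the support function is just $g(x)=|x|$, so the integrand only sees $|x|$, $|k-x|$, and their sum. The natural parametrization is therefore by confocal ellipses with foci at $0$ and $k$. Placing $k$ on the first coordinate axis and writing $x=(a,b)$, introduce $\tau=|x|+|k-x|\in[|k|,+\infty)$ and $s=|x|-|k-x|\in[-|k|,|k|]$; then $a=(s\tau+|k|^{2})/(2|k|)$ and $b^{2}=(|k|^{2}-s^{2})(\tau^{2}-|k|^{2})/(4|k|^{2})$. A direct Jacobian computation (with a factor of $2$ for the two signs of $b$) reduces the integral to
\[
\int_{\mathbb{R}^{2}}|x|^{-\alpha}|k-x|^{-\alpha}(1+|Y-|x|-|k-x||)^{-\beta}\,dx
=C\int_{|k|}^{+\infty}\frac{(1+|Y-\tau|)^{-\beta}}{\sqrt{\tau^{2}-|k|^{2}}}\,I(\tau,|k|)\,d\tau,
\]
where $I(\tau,|k|)=\int_{-|k|}^{|k|}(\tau^{2}-s^{2})^{1-\alpha}(|k|^{2}-s^{2})^{-1/2}\,ds$ is an angular integral depending only on $\tau$ and $|k|$.

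I would then estimate $I(\tau,|k|)$ via the substitution $s=|k|\cos\psi$, which turns it into $2\int_{0}^{\pi/2}(\mu+|k|^{2}\sin^{2}\psi)^{1-\alpha}d\psi$ with $\mu=\tau^{2}-|k|^{2}\ge 0$. For $\tau\gtrsim 2|k|$ this is comparable to $\tau^{2-2\alpha}$; for $\tau-|k|$ small relative to $|k|$ the behavior depends sharply on $\alpha$. Rescaling $\psi=(\sqrt{\mu}/|k|)\,t$ shows $I(\tau,|k|)\asymp |k|^{-1}(\tau^{2}-|k|^{2})^{3/2-\alpha}$ when $\alpha>3/2$, $I(\tau,|k|)\asymp |k|^{2-2\alpha}$ when $\alpha<3/2$, and an extra logarithmic factor in the critical case $\alpha=3/2$. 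Multiplying by the radial density $(\tau^{2}-|k|^{2})^{-1/2}\sim(2|k|)^{-1/2}(\tau-|k|)^{-1/2}$ and changing variable $\tau\mapsto\tau+|k|$ reduces the problem to a one-dimensional integral of the form $\int_{0}^{+\infty}\tau^{1-\gamma}(1+|X-\tau|)^{-\beta}d\tau$ with $X=Y-|k|$ and $\gamma$ an explicit exponent read off from the angular estimate.

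Each case of the lemma is then a mechanical application of Lemma \ref{lm_crucial} to this one-dimensional integral, possibly split into a local part for $\tau\lesssim|k|$ (treated by Lemma \ref{lm_crucial}(1) or (3)) and a tail for $\tau\gg|k|$ (treated by Lemma \ref{lm_crucial}(4)). In parts (1), (3) and the $\beta>1/2$ branch of (4) the weight $\beta$ is strong enough for the local contribution near $\tau=|k|$ to dominate, and Lemma \ref{lm_crucial}(1) produces the decay $(1+|Y-|k||)^{2-\alpha-\min\{1,\beta\}}$; in part (2) and the $\beta\le 1/2$ branch of (4), $\beta$ is too weak to localize, so Lemma \ref{lm_crucial}(3) with $T=|k|$ yields a bound depending only on $|k|$. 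The critical combinations $\alpha=3/2$ (in (1)), $\beta=2-\alpha$ (in (2)), $(\alpha,\beta)=(3/2,1/2)$ (in (3)), and $\beta=1$ (in (4)) pick up extra logarithmic factors from the angular integral, from the one-dimensional estimate, or from both; in (3) the two log-sources combine to produce the $\log^{2}$.

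Part (5) is independent and elementary: split $\mathbb{R}^{2}=\{|x|\le|x-k|\}\cup\{|x|>|x-k|\}$, bound $|x|^{-\alpha}|x-k|^{-\alpha}$ by $|x|^{-2\alpha}$ on the first set and by $|x-k|^{-2\alpha}$ on the second, and integrate in polar coordinates around $0$ and $k$ respectively. The main obstacle throughout is the bookkeeping: correctly identifying, for each pair $(\alpha,\beta)$, which of the three regimes of $I(\tau,|k|)$ and which case of Lemma \ref{lm_crucial} is active, and tracking the critical boundaries where the logarithms materialize exactly as in the statement.
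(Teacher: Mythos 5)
Your proposal reconstructs the paper's argument in all essentials: parametrize by the confocal ellipses $\{|x|+|k-x|=\tau\}$, integrate out the angular direction to isolate a function $I(\tau,|k|)$ whose asymptotics depend on whether $\alpha\lessgtr 3/2$, and then apply Lemma \ref{lm_crucial} to the resulting one-dimensional integral in $\tau$. Your elliptic coordinates $(\tau,s)$ with $s=|x|-|k-x|$ give an \emph{exact} reduction
\[
\int_{\mathbb{R}^{2}}|x|^{-\alpha}|k-x|^{-\alpha}(1+|Y-\tau|)^{-\beta}dx
= C\int_{|k|}^{+\infty}\frac{(1+|Y-\tau|)^{-\beta}}{\sqrt{\tau^{2}-|k|^{2}}}\,I(\tau,|k|)\,d\tau,
\]
which is slightly cleaner than the paper's route, where the plane is first split by $|x|\lessgtr|k-x|$ and $|x|+|k-x|\lessgtr 3|k|$, the factor $|k-x|^{-\alpha}$ is bounded crudely by $|k|^{-\alpha}$ in the inner region, and only then is the $(\tau,\theta)$ change of variables performed. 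Your Jacobian computation and the asymptotics $I\asymp|k|^{-1}(\tau^{2}-|k|^{2})^{3/2-\alpha}$ ($\alpha>3/2$, $\tau\lesssim 2|k|$), $I\asymp|k|^{2-2\alpha}$ ($\alpha<3/2$, $\tau\lesssim 2|k|$), and $I\asymp\tau^{2-2\alpha}$ ($\tau\gtrsim 2|k|$) check out and reproduce exactly the one-dimensional weights $|k|^{-\alpha}\tau^{1-\alpha}$, $|k|^{3/2-2\alpha}\tau^{-1/2}$, and $\tau^{1-2\alpha}$ that appear in the paper's proof.

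One slip in your case bookkeeping should be flagged. You file part (3), where $\alpha=3/2$ and $\beta=1/2$, together with parts (1) and the $\beta>1/2$ branch of (4), saying that ``Lemma \ref{lm_crucial}(1) produces the decay $(1+|Y-|k||)^{2-\alpha-\min\{1,\beta\}}$''. But Lemma \ref{lm_crucial}(1) requires $0<2-\alpha<\beta$, which fails on the boundary $\beta=2-\alpha=1/2$: part (3) must go with part (2) and the $\beta\le 1/2$ branch of (4), i.e. it is handled by Lemma \ref{lm_crucial}(3) with $T\sim|k|$ (supplemented by \ref{lm_crucial}(2) for the $\log(1/\tau)$ factor near $\tau=0$ and \ref{lm_crucial}(4) for the tail), and it yields a $Y$-independent bound. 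Your own final sentence, attributing the $\log^{2}$ to ``two log-sources'' (one from $\delta_{3/2,\alpha}$ in the angular integral, one from $\delta_{2-\alpha,\beta}$ in Lemma \ref{lm_crucial}(3)), is the correct accounting and contradicts your earlier grouping; it is only that earlier sentence which would derail the write-up if followed literally. Finally, your part (5) uses the elementary split $\{|x|\le|x-k|\}\cup\{|x|>|x-k|\}$ rather than the paper's rearrangement inequality; this gives a bound with a larger absolute constant, which is entirely harmless for the application.
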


\proof The symmetry between $0$ and $k$ gives
\begin{align*}
& \int_{\mathbb{R}^{2}} | x| ^{-\alpha}| k-x| ^{-\alpha}\left(1+\left| Y-| x| -| k-x| \right|\right)  ^{-\beta}dx\\
&= 2\int_{\left\{  | x| +|k-x| \leq3| k| ,\ | x|\leq| k-x| \right\}  } | x| ^{-\alpha}| k-x| ^{-\alpha}\left(1+\left| Y-| x| -| k-x| \right|\right)  ^{-\beta}dx\\
&+ 2\int_{  | x| +|k-x| \geq3| k| ,\ | x|\leq| k-x|   }| x| ^{-\alpha}| k-x| ^{-\alpha}\left(1+\left| Y-| x| -| k-x| \right|\right)  ^{-\beta}dx\\
&\leq C| k| ^{-\alpha} \int_{  | x| +|k-x| \leq3| k|   }| x| ^{-\alpha}\left(  1+\left| Y-|x| -| k-x| \right| \right)  ^{-\beta}dx\\
&+ C\int_{ | x| +|k-x| \geq3| k|   }| x| ^{-2\alpha}\left(  1+\left| Y-|x| -| k-x| \right| \right)  ^{-\beta}dx.
\end{align*}
We estimate here the first integral, the second being studied similarly.
The integral is invariant under rotations of $k$, so that one can assume $k=\left(  | k| ,0\right)  $. 
Write in polar coordinates $y=\left(  \rho\cos\left(  \vartheta\right)  ,\rho\sin\left(\vartheta\right) \right)  $, with $0\leq\rho<+\infty$, $0\leq\vartheta\leq2\pi$.
In these polar coordinates the ellipse $\left\{| x| +| k-x| =\tau\right\}  $ has equation
$$\rho=\frac{\tau^{2}-| k| ^{2}}{2\left(  \tau-|k| \cos\left(  \vartheta\right)  \right)  }.$$
In the variables $\left(  \tau,\vartheta\right)  $, $|k| \leq\tau<+\infty$, $0\leq\vartheta\leq2\pi$, one has
$$\frac{d\rho}{d\tau}  =\frac{\tau^{2}-2| k| \tau\cos\left(  \vartheta\right)  +| k| ^{2}}{2\left(\tau-| k| \cos\left(  \vartheta\right)  \right)  ^{2}},$$
and
\begin{align*}
dx    =\rho  d\rho d\vartheta =  \frac{\tau^{2}-| k| ^{2}}{2\left(\tau-| k| \cos\left(  \vartheta\right) \right)}  \frac{\tau^{2}-2| k| \tau\cos\left(  \vartheta\right)  +| k| ^{2}}{2\left(  \tau-| k|\cos\left(  \vartheta\right)  \right)  ^{2}}  d\tau d\vartheta.
\end{align*}
Hence,
\begin{align*}
& \int_{  | x| +|k-x| \leq3| k|   }\left(  1+\left| Y-| x| -| k-x|\right| \right)  ^{-\beta}| x| ^{-\alpha}dx\\
&= \int_{| k| }^{3| k| }\int_{0}^{2\pi}\left(  1+| Y-\tau| \right)  ^{-\beta}\left(  \frac{\tau^{2}-| k| ^{2}}{2\left(  \tau-| k|\cos\left(  \vartheta\right)  \right)  }\right)  ^{1-\alpha}
 \frac{\tau^{2}-2| k| \tau\cos\left(  \vartheta\right)  +| k| ^{2}}{2\left(  \tau-| k|\cos\left(  \vartheta\right)  \right)  ^{2}} d\tau d\vartheta \\
&= 2^{\alpha-2}\int_{| k| }^{3| k| }\left(  1+| Y-\tau| \right)  ^{-\beta}\left(  \tau-|k| \right)  ^{1-\alpha}\left(  1+\left(  | k|/\tau\right)  \right)  ^{1-\alpha}\\
&  \times\int_{0}^{2\pi}\left(  1-2\left(  | k| /\tau\right)  \cos\left(\vartheta\right)  +\left(  | k| /\tau\right)  ^{2}\right)\left(  1-\left(  | k| /\tau\right)  \cos\left(\vartheta\right)  \right)  ^{\alpha-3}d\vartheta d\tau.
\end{align*}
The term $1+\left(  | k| /\tau\right)  $ in the last double integral is bounded between 1 and 2. Hence,
\begin{align*}
&\int_{ | x| +|k-x| \leq3| k|  }\left(  1+\left| Y-| x| -| k-x|\right| \right)  ^{-\beta}| x| ^{-\alpha}dx\\
&  \leq C\int_{0}^{2| k| }\left(  1+\left| Y-| k| -\tau\right| \right)^{-\beta}\tau^{1-\alpha}E\left(  | k| /\left(  |k| +\tau\right)  ,\alpha\right)  d\tau,
\end{align*}
where
\[
E(t,\alpha)=2 \int_{0}^{\pi}\left(  1-2t\cos\left(  \vartheta\right)  +t^{2}\right)\left(  1-t\cos\left(  \vartheta\right)  \right)  ^{\alpha-3} d\vartheta.
\]
When $0<t<1/2$, $E(t,\alpha)\leq C$. 
When $1/2\leq t<1,$ the integral over $\pi/2\leq\vartheta\leq\pi$ is bounded independently of $t$, and when $0\leq\vartheta\leq\pi/2$ one has  $1-\vartheta^{2}/2\leq\cos\left(  \vartheta\right)  \leq1-4\vartheta^{2}/\pi^{2}$. 
Hence one ends up with the integral

\begin{align*}
&\int_{0}^{\pi/2}\left(  1-2t\left(  1-\vartheta^{2}/2\right)  +t^{2}\right)  \left(  1-t\left(  1-4\vartheta^{2}/\pi^{2}\right)\right)  ^{\alpha-3}d\vartheta\\
&  =\int_{0}^{\pi/2}\left(  \left(  1-t\right)  ^{2}+t\vartheta^{2}\right)\left(  1-t+4t\vartheta^{2}/\pi^{2}\right)  ^{\alpha-3}d\vartheta\\
&  \leq C\left(  1-t\right)  ^{\alpha-1}\int_{0}^{1-t}d\vartheta+C\left(  1-t\right)  ^{\alpha-3}\int_{1-t}^{\sqrt{1-t}}\vartheta^{2}d\vartheta+C\int_{\sqrt{1-t}}^{\pi/2}\vartheta^{2\alpha-4}d\vartheta\\
&  \leq
\begin{cases}
C(1-t)^{\alpha-3/2} & \textit{if }\alpha<3 /2,\\
-C\log\left(  1-t\right) & \textit{if }\alpha=3/2,\\
C & \textit{if }\alpha>3  /2.
\end{cases}
\end{align*}

Hence
\begin{align*}
&| k| ^{-\alpha} \int_{ | x| +|k-x| \leq3| k|  }| x| ^{-\alpha}\left(  1+\left| Y-|x| -| k-x| \right| \right)  ^{-\beta}dx\\
\leq & C| k| ^{-\alpha-\min\{0,\alpha-3/2\}}\\
&\times \int_{0}^{2| k| }\left(  1+\left| Y-| k| -\tau\right| \right)^{-\beta}\tau^{1-\alpha+\min\{0,\alpha-3/2\}}\log^{\delta_{\alpha,3/2}}((|k|+\tau)/\tau)d\tau.
\end{align*}

Similarly,
\begin{align*}
& \int_{  | x| +|k-x| \geq3| k|   }| x| ^{-2\alpha}\left(  1+\left| Y-|x| -| k-x| \right| \right)  ^{-\beta}dx\\
\leq & C\int_{2| k| }^{+\infty}\left(  1+\left| Y-| k| -\tau\right| \right)^{-\beta}\tau^{1-2\alpha}d\tau.
\end{align*}

Assume $3/2<\alpha<2$ and $\alpha>2-\beta$. Then applying Lemma \ref{lm_crucial} (1), we obtain
\begin{align*}
&
| k| ^{-\alpha}\int_{0}^{2| k| }\left(  1+\left| Y-| k| -\tau\right| \right)^{-\beta}\tau^{1-\alpha}d\tau
+ \int_{2| k| }^{+\infty}\left(  1+\left| Y-| k| -\tau\right| \right)^{-\beta}\tau^{1-2\alpha}d\tau\\
&\leq | k| ^{-\alpha}\int_{0}^{+\infty }\left(  1+\left| Y-| k| -\tau\right| \right)^{-\beta}\tau^{1-\alpha}d\tau\\
&\leq C |k|^{-\alpha}(1+|Y-|k||)^{2-\alpha-\min\{1,\beta\}}\log^{\delta_{1,\beta}}(2+|Y-|k||).
\end{align*}

Assume now $\alpha=3/2$ and $\beta>1/2$. Then 
\begin{align*}
&
| k| ^{-\alpha}\int_{0}^{2| k| }\left(  1+\left| Y-| k| -\tau\right| \right)^{-\beta}\tau^{1-\alpha}\log\left((|k|+\tau)/\tau\right)d\tau\\
&+ \int_{2| k| }^{+\infty}\left(  1+\left| Y-| k| -\tau\right| \right)^{-\beta}\tau^{1-2\alpha}d\tau\\
&\leq | k| ^{-\alpha}\int_{0}^{+\infty }\left(  1+\left| Y-| k| -\tau\right| \right)^{-\beta}\tau^{1-\alpha}\log\left((|k|+\tau)/\tau\right)d\tau\\
&\leq | k| ^{-\alpha}\log\left(1+|k|\right)\int_{0}^{+\infty }\left(  1+\left| Y-| k| -\tau\right| \right)^{-\beta}\tau^{1-\alpha}d\tau\\
&+| k| ^{-\alpha}\int_{0}^{+\infty }\left(  1+\left| Y-| k| -\tau\right| \right)^{-\beta}\tau^{1-\alpha}\log\left(1+1/\tau\right)d\tau.
\end{align*}
Applying Lemma \ref{lm_crucial} (1), we obtain
\begin{align*}
&| k| ^{-\alpha}\log\left(|k|\right)\int_{0}^{+\infty }\left(  1+\left| Y-| k| -\tau\right| \right)^{-\beta}\tau^{1-\alpha}d\tau\\
&\leq C| k| ^{-\alpha}\log\left(|k|\right)(1+|Y-|k||)^{2-\alpha-\min\{\beta,1\}}\log^{\delta_{\beta,1}}(2+|Y-|k||).
\end{align*}
Applying Lemma \ref{lm_crucial} (1) and (2), we obtain
\begin{align*}
&| k| ^{-\alpha}\int_{0}^{+\infty }\left(  1+\left| Y-| k| -\tau\right| \right)^{-\beta}\tau^{1-\alpha}\log\left(1+1/\tau\right)d\tau\\
&\leq | k| ^{-\alpha}\int_{0}^{+\infty }\left(  1+\left| Y-| k| -\tau\right| \right)^{-\beta}\tau^{1-\alpha}d\tau\\
& +| k| ^{-\alpha}\int_{0}^{1 }\left(  1+\left| Y-| k| -\tau\right| \right)^{-\beta}\tau^{1-\alpha}(-\log\left(\tau\right))d\tau\\
&\leq C| k| ^{-\alpha}(1+|Y-|k||)^{2-\alpha-\min\{\beta,1\}}\log^{\delta_{\beta,1}}(2+|Y-|k||).
\end{align*}

If $3/2<\alpha<2$ and $\alpha\leq 2-\beta$, then applying Lemma \ref{lm_crucial} (3), we obtain
\begin{align*}
| k| ^{-\alpha}\int_{0}^{2| k| }\left(  1+\left| Y-| k| -\tau\right| \right)^{-\beta}\tau^{1-\alpha}d\tau\leq C | k| ^{2-2\alpha-\beta}\log^{\delta_{2-\alpha,\beta}}(|k|),
\end{align*}
and, by Lemma \ref{lm_crucial} (4),
\begin{align*}
\int_{2| k| }^{+\infty}\left(  1+\left| Y-| k| -\tau\right| \right)^{-\beta}\tau^{1-2\alpha}d\tau\leq C | k| ^{2-2\alpha-\beta}.
\end{align*}

Assume $\alpha=3/2$ and $\beta=1/2$. Then, by Lemma \ref{lm_crucial} (2), (3) and (4),
\begin{align*}
&
| k| ^{-\alpha}\int_{0}^{2| k| }\left(  1+\left| Y-| k| -\tau\right| \right)^{-\beta}\tau^{1-\alpha}\log\left((|k|+\tau)/\tau\right)d\tau\\
&+ \int_{2| k| }^{+\infty}\left(  1+\left| Y-| k| -\tau\right| \right)^{-\beta}\tau^{1-2\alpha}d\tau\\
&\leq C| k| ^{-\alpha}\log(|k|)\int_{0}^{2| k| }\left(  1+\left| Y-| k| -\tau\right| \right)^{-\beta}\tau^{1-\alpha}d\tau\\
&+C| k| ^{-\alpha}\int_{0}^{1 }\left(  1+\left| Y-| k| -\tau\right| \right)^{-\beta}\tau^{1-\alpha}(-\log\left(\tau\right))d\tau\\
&+ \int_{2| k| }^{+\infty}\left(  1+\left| Y-| k| -\tau\right| \right)^{-\beta}\tau^{1-2\alpha}d\tau\\
&\leq C|k|^{-\alpha}\log^2(|k|).
\end{align*}

Assume $3/4<\alpha< 3/2$ and $\beta>1/2$. Then applying Lemma \ref{lm_crucial} (1), we obtain
\begin{align*}
&
| k| ^{-2\alpha+3/2}\int_{0}^{2| k| }\left(  1+\left| Y-| k| -\tau\right| \right)^{-\beta}\tau^{-1/2}d\tau\\
&+ \int_{2| k| }^{+\infty}\left(  1+\left| Y-| k| -\tau\right| \right)^{-\beta}\tau^{1-2\alpha}d\tau\\
&\leq | k| ^{-2\alpha+3/2}\int_{0}^{+\infty }\left(  1+\left| Y-| k| -\tau\right| \right)^{-\beta}\tau^{-1/2}d\tau\\
&\leq C |k|^{-2\alpha+3/2}(1+|Y-|k||)^{1/2-\min\{1,\beta\}}\log^{\delta_{1,\beta}}(2+|Y-|k||).
\end{align*}

If $3/4<\alpha<3/2$ and $0\leq\beta\leq 1/2$, then applying Lemma \ref{lm_crucial} (3), we obtain
\begin{align*}
| k| ^{-2\alpha+3/2}\int_{0}^{2| k| }\left(  1+\left| Y-| k| -\tau\right| \right)^{-\beta}\tau^{-1/2}d\tau\leq C | k| ^{2-2\alpha-\beta},
\end{align*}
and, by Lemma \ref{lm_crucial} (4),
\begin{align*}
\int_{2| k| }^{+\infty}\left(  1+\left| Y-| k| -\tau\right| \right)^{-\beta}\tau^{1-2\alpha}d\tau\leq C | k| ^{2-2\alpha-\beta}.
\end{align*}

Finally,  (5) follows by a simple rearrangement inequality (see \cite[Theorem 3.4]{LL}),
\[
\int_{|x|,|x-k|>1}|x|^{-\alpha}|x-k|^{-\alpha}dx
\le \int_{|x|>1}|x|^{-2\alpha}dx=(2\alpha-2)^{-1}.
\] 
\endproof

The next two lemmas are the counterpart of Lemmas \ref{lm_p=4} and \ref{lm_p=6} in the case of the ellipse.
The minimal value $z_4$ of $\mathrm{Re}(z)$ for which the $L^4$ norm of $\Phi(\delta,z,\cdot,\cdot)$
is proved to be bounded is lowered from $\max\{(6-\beta)/4,11/8\}$ to $\max\{(6-\beta)/4,5/4\}$. Similarly, $z_6$
is lowered from $\max\{(10-\beta)/6,8/5\}$ to $\max\{(10-\beta)/6,3/2\}$.

\begin{figure}[h!]
\begin{center}
\includegraphics[scale=0.6]{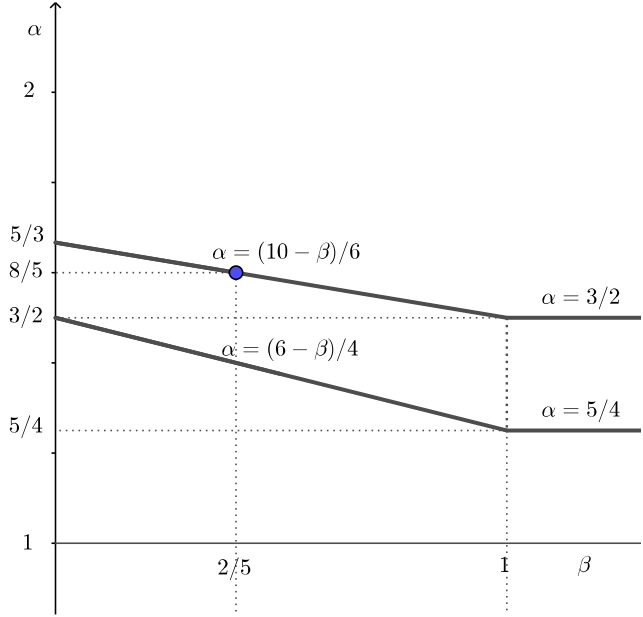}
\end{center}
\caption{The minimal values of $\alpha=\mathrm{Re}(z)$ for the ellipse in dimension $2$. The values $z_4$ (bottom) and $z_6$ (top). Compare with Figure \ref{F3}{\footnotesize(A)}.}
\end{figure}

\begin{lemma} \label{lm_p=4_disc} 
 Let $d=2$ and $\beta\ge 0$. Set $z_4=\max\{(6-\beta)/4,5/4\}$.
If $\mathrm{Re}(z)  \geq z_4$, then there exists $C>0$ such that for every $R\geq 1$ and $0<\delta<1/2$,
\begin{align*}
  \int_{\mathbb{R}}  \int_{\mathbb{T}^{2}}|\Phi\left(  \delta,z,r,x\right)  | ^{4}dxd\mu(r-R)
  \leq
\begin{cases}
C & \text{if }\mathrm{Re}\left(  z\right)  >z_4,\\
C\log\left(  1/\delta\right)  & \text{if }\mathrm{Re}\left(  z\right)=z_4 \text{ and }
\beta\neq 1,\\
C\log^2\left(  1/\delta\right)  & \text{if }\mathrm{Re}\left(  z\right)=z_4 \text { and }
\beta= 1.
\end{cases}
\end{align*}
 \end{lemma}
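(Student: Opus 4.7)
The plan is to mirror the structure of the proof of Lemma \ref{lm_p=4}, but substitute the sharper pointwise estimates of Lemma \ref{LM_Integrale} for the coarser Lemma \ref{integral}. Setting $\alpha=\mathrm{Re}(z)$ and invoking the ellipse-specific reformulation of Lemma \ref{lm_6_integral_ellipsoid} with $N=2$, the problem reduces to controlling
\[
\int_{\mathbb R^2}(1+\delta|k|)^{-\lambda}\mathcal I_m(k)\,dk,\qquad
\mathcal I_m(k)=\iint|m|^{-\alpha}|k-m|^{-\alpha}\mathcal I_n(k,m)\,dm,
\]
where $\mathcal I_n(k,m)=\iint|n|^{-\alpha}|k-n|^{-\alpha}(1+|Y|)^{-\beta}dn$ with $Y=(|m|+|k-m|)-(|n|+|k-n|)$ and the innermost integrations are over $|\cdot|>1$. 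Split the outer $k$-integral at $|k|=2$; the piece on $|k|\le 2$ is bounded immediately via Lemma \ref{LM_Integrale}(5), applied twice after dropping the harmless $(1+|Y|)^{-\beta}$ factor, since $\alpha\ge z_4>1$.

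On $|k|>2$, fix $m$ and apply Lemma \ref{LM_Integrale} to $\mathcal I_n(k,m)$ with the parameter $Y$ of the lemma taken to be $|m|+|k-m|$. This produces a bound of the form $|k|^{\mu}\log^{\star}(|k|)\cdot F(|m|+|k-m|-|k|)$, where $F$ is either trivial (in cases (2) and (3), and in case (4) when $\beta\le 1/2$) or exhibits genuine decay $F(t)=(1+|t|)^{2-\alpha-\min\{1,\beta\}}\log^{\delta_{1,\beta}}(2+|t|)$ (in case (1) and in case (4) when $\beta>1/2$). When $F\equiv 1$ the remaining $m$-integral collapses to $\int|m|^{-\alpha}|k-m|^{-\alpha}\,dm\le C|k|^{2-2\alpha}$; otherwise I apply Lemma \ref{LM_Integrale} a second time, now with $Y=|k|$ and the $\beta$-parameter replaced by $\alpha+\min\{1,\beta\}-2\ge 0$. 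In every case $\mathcal I_m(k)$ ends up dominated by a pure power of $|k|$ times at most a logarithmic correction.

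The final step is the $k$-integration, which takes the form $\int_{|k|>2}(1+\delta|k|)^{-\lambda}|k|^{-\kappa}\log^{\star}(|k|)\,dk$ in polar coordinates. This is uniformly bounded for $\kappa>2$; at the critical value $\kappa=2$ it contributes one extra $\log(1/\delta)$, and two extra logarithms precisely at the line $\beta=1$, where the internal $\log^{\delta_{1,\beta}}$ activated in Lemma \ref{LM_Integrale}(1) matches the logarithmic window coming from the $k$-integration. A direct computation of exponents places the boundary exactly at $\alpha=\max\{(6-\beta)/4,\,5/4\}=z_4$. The improvement over the threshold $11/8$ of Lemma \ref{lm_p=4} originates from the sharper decay $\min\{1,\beta\}$ (versus the capped $\min\{1,\beta,1/2\}$ of Lemma \ref{integral}) that Lemma \ref{LM_Integrale}(1) and (4) provide in the variable $|Y-|k||$.

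The main obstacle is the case analysis needed to choose the correct sub-case of Lemma \ref{LM_Integrale} depending on the position of $(\alpha,\beta)$ relative to the dividing lines $\alpha=3/2$, $\beta=1/2$, $\beta=1$ and $\beta=2-\alpha$, together with the careful bookkeeping of the several Kronecker-$\delta$ logarithmic factors $\delta_{3/2,\alpha}$, $\delta_{1,\beta}$, $\delta_{2-\alpha,\beta}$. The delicate endpoint is $\beta=1$, $\alpha=5/4$, where the two logarithmic mechanisms described above combine to produce the $\log^2(1/\delta)$ bound.
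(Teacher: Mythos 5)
Your proposal reproduces the structure of the paper's own proof: reduce via Lemma \ref{lm_6_integral_ellipsoid} with $N=2$ to a multiple integral, split at $|k|=2$, kill the inner ball with Lemma \ref{LM_Integrale}(5), and on $|k|>2$ apply Lemma \ref{LM_Integrale} first in $n$ (with $Y=|m|+|k-m|$) and then again in $m$ (with $Y=|k|$), handling the endpoint $(\alpha,\beta)=(5/4,1)$ separately by combining the internal logarithm with the one from the final $k$-integration. That is exactly the paper's route.

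However, the exponent you quote for case (4) of Lemma \ref{LM_Integrale} is wrong, and this is not harmless. You write $F(t)=(1+|t|)^{2-\alpha-\min\{1,\beta\}}$ for case (4) when $\beta>1/2$, but this is the case (1) exponent; case (4) gives $F(t)=(1+|t|)^{1/2-\min\{1,\beta\}}$, and the two agree only when $\alpha=3/2$. For the critical value $\alpha=z_4$ with $\beta>0$ one has $z_4<3/2$, so case (1) is not applicable and case (4) must be used with its own exponent. Concretely, with your formula the second application would require a $\beta$-parameter $\alpha+\min\{1,\beta\}-2$, which is negative (e.g.\ $-1/8$ at $\beta=1/2$, $\alpha=z_4=11/8$), so Lemma \ref{LM_Integrale} cannot be invoked; the correct second $\beta$-parameter is $\min\{1,\beta\}-1/2\ge0$, which with the case (4) prefactor $|k|^{3/2-2\alpha}$ yields the total decay $|k|^{4-4\alpha-\min\{1,\beta\}}$ and hence the threshold $\alpha\ge(6-\min\{1,\beta\})/4=z_4$. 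Once you substitute the correct case (4) exponents, your argument closes exactly as in the paper.
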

 
 \proof
Call $\alpha=\mathrm{Re}\left(  z\right)$. By Lemma \ref{lm_6_integral_ellipsoid} with $N=2$, it suffices to estimate
 \begin{align*}
 &  \int_{\mathbb{R}^{2}} \left(  1+\delta| k| \right)^{-\lambda} \int_{|m| ,| k-m| >1}\left(  1+\delta| m| \right)^{-\lambda}|m|^{-\alpha}|k-m|^{-\alpha}\\
 & \times \int_{|n|,| k-n| >1}\left(  1+\delta| n| \right)^{-\lambda}|n|^{-\alpha}|k-n|^{-\alpha}\\
  &\times\left(  1+| |m| +|k-m| -|n|-|k-n| |\right)  ^{-\beta}dndm dk.
 \end{align*}
 Split $\mathbb{R}^{2}$ as $\left\{|k| \leq2\right\}  \cup\left\{  | k|
 \geq2\right\}  $. By Lemma \ref{LM_Integrale} (5), the integral over the disc $\left\{ | k| \leq2\right\}  $ is bounded by
 \[
   \int_{  | k| \leq2  }\left(\int_{|m|,|k-m|>1}| m| ^{-\alpha}| k-m| ^{-\alpha}dm\right)  ^{2}dk
 \leq C.
 \]
 Consider now the case $\left\{  | k| \geq2\right\}  $. Assume $\beta>1/2$. Apply Lemma \ref{LM_Integrale} (4) to the integral with respect to $n$ with 
 $Y$ replaced with $|m|+|k-m|
 $. 
  \begin{align*}
 & \int_{|n|,| k-n| >1}|n|^{-\alpha}|k-n|^{-\alpha} \left(  1+| |m| +|k-m| -|n|-|k-n| |\right)  ^{-\beta}dn\\
 &  \leq C|k|^{3/2-2\alpha}(1+||m|+|k-m|-|k||)^{1/2-\min\{1,\beta\}}\\
 &\times\log^{\delta_{1,\beta}}(2+||m|+|k-m|-|k||).
 \end{align*}
 Thus we obtain the integral
  \begin{align*}
 &|k|^{3/2-2\alpha} \int_{|m|,| k-m| >1}|m|^{-\alpha}|k-m|^{-\alpha} (1+||m|+|k-m|-|k||)^{1/2-\min\{1,\beta\}}\\
 &\times\log^{\delta_{1,\beta}}(2+||m|+|k-m|-|k||)dm.
 \end{align*}
If $\beta=1$ and $\alpha>5/4$, the logarithm can be removed as long as one replaces $\alpha$ with a slightly smaller value greater than $5/4$. Therefore, if $(\alpha,\beta)\neq(5/4,1)$, then we only need to estimate
 \begin{align*}
 &|k|^{3/2-2\alpha} \int_{|m|,| k-m| >1}|m|^{-\alpha}|k-m|^{-\alpha} (1+||m|+|k-m|-|k||)^{1/2-\min\{1,\beta\}}dm\\
 &\leq C|k|^{4-4\alpha-\min\{1,\beta\}}
 \end{align*}   
  where we have applied Lemma \ref{LM_Integrale} (4).
 Finally, the integral over $\left\{  | k| \geq2\right\}  $ gives
 \begin{align*}
 &\int_{\substack{  | k| \geq2}  }(1+\delta|k|)^{-\lambda}|k| ^{4-4\alpha-\min\{1,\beta\}}dk\\
 &\leq 
\begin{cases}
C & \text{ if } \alpha>z_4,\\
C\log(1/\delta) & \text{ if } \alpha=z_4 \text{ and } \beta\neq1.
\end{cases}
\end{align*}
Assume $0\leq\beta\leq1/2$. Apply Lemma \ref{LM_Integrale}(4) to the integral with respect to $n$ with 
 $Y$ replaced with $|m|+|k-m|
 $. 
  \begin{align*}
  \underset{|n|,| k-n| >1}\int|n|^{-\alpha}|k-n|^{-\alpha} \left(  1+| |m| +|k-m| -|n|-|k-n| |\right)  ^{-\beta}dn \leq C|k|^{2-2\alpha-\beta}.
 \end{align*}
 Thus we obtain the integral
  \begin{align*}
 |k|^{2-2\alpha-\beta} \int_{|m|,| k-m| >1}|m|^{-\alpha}|k-m|^{-\alpha}dm
 \leq C|k|^{4-4\alpha-\beta}.
 \end{align*}
 Finally, the integral over $\left\{  | k| \geq2\right\}  $ gives
 \begin{align*}
 \int_{\substack{  | k| \geq2}  }(1+\delta|k|)^{-\lambda}|k| ^{4-4\alpha-\beta}dk
 \leq 
\begin{cases}
C  & \text{ if } \alpha>z_4\\
C\log(1/\delta) & \text{ if } \alpha=z_4.
\end{cases}
\end{align*}

It remains to study the case $\alpha=5/4$ and $\beta=1$. By Lemma \ref{LM_Integrale} (4),
 \begin{align*}
 &  \int_{\mathbb{R}^{2}} \left(  1+\delta| k| \right)^{-\lambda} \int_{|m| ,| k-m| >1}\left(  1+\delta| m| \right)^{-\lambda}|m|^{-\alpha}|k-m|^{-\alpha}\\
 & \times \int_{|n|,| k-n| >1}|n|^{-\alpha}|k-n|^{-\alpha} \left(  1+| |m| +|k-m| -|n|-|k-n| |\right)  ^{-\beta}dndm dk\\
 &\leq
 C\int_{\mathbb{R}^{2}} \left(  1+\delta| k| \right)^{-\lambda} |k|^{-1}\int_{|m| ,| k-m| >1}\left(  1+\delta| m| \right)^{-\lambda}|m|^{-\alpha}|k-m|^{-\alpha}\\
 &  \times(1+||m|+|k-m|-|k||)^{-1/2}\log(2+||m|+|k-m|-|k||)dmdk.
 \end{align*}

 Since there is a $C>0$ such that for $0<\delta<1/2$ 
 \[
(1+\delta|m|)^{-\lambda}\log(2+2|m|)\leq C\log(1/\delta),
  \]
 and  $2+||m|+|k-m|-|k||\leq 2+2|m|$, we obtain 
  \begin{align*}
 &\int_{\mathbb{R}^{2}} \left(  1+\delta| k| \right)^{-\lambda} |k|^{-1}\int_{|m| ,| k-m| >1}\left(  1+\delta| m| \right)^{-\lambda}|m|^{-\alpha}|k-m|^{-\alpha}\\
 &  \times(1+||m|+|k-m|-|k||)^{-1/2}\log(2+||m|+|k-m|-|k||)dmdk\\
 &  \leq C\log(1/\delta)\int_{\mathbb{R}^{2}} \left(  1+\delta| k| \right)^{-\lambda} |k|^{-1}\int_{|m| ,| k-m| >1}|m|^{-\alpha}|k-m|^{-\alpha}\\
 &  \times(1+||m|+|k-m|-|k||)^{-1/2}dmdk\\
 &  \leq C\log(1/\delta)\int_{\mathbb{R}^{2}} \left(  1+\delta| k| \right)^{-\lambda} |k|^{-2}dk\leq C\log^2(1/\delta),
 \end{align*}
 by Lemma \ref{LM_Integrale} (4).
   \endproof

\begin{lemma} \label{lm_p=6_disc} 
 Let $d=2$ and $\beta\ge 0$. Set $z_6=\max\{(10-\beta)/6,3/2\}$.
If $\mathrm{Re}(z)  \geq z_6$, then there exists $C>0$ such that for every $R\geq 1$ and $0<\delta<1/2$,
\begin{align*}
  \int_{\mathbb{R}}  \int_{\mathbb{T}^{2}}|\Phi\left(  \delta,z,r,x\right)  | ^{6}dxd\mu(r-R)
  \leq
\begin{cases}
C & \text{if }\mathrm{Re}\left(  z\right)  >z_6,\\
C\log\left(  1/\delta\right)  & \text{if }\mathrm{Re}\left(  z\right)=z_6 \text{ and }
0\leq\beta<2/5,\\
C\log^2\left(  1/\delta\right)  & \text{if }\mathrm{Re}\left(  z\right)=z_6 \text { and }
\beta= 2/5,\\
C\log\left(  1/\delta\right)  & \text{if }\mathrm{Re}\left(  z\right)=z_6 \text{ and }
2/5<\beta<1,\\
C\log^5\left(  1/\delta\right)  & \text{if }\mathrm{Re}\left(  z\right)=z_6 \text{ and }
\beta=1,\\
C\log^4\left(  1/\delta\right)  & \text{if }\mathrm{Re}\left(  z\right)=z_6 \text{ and }
\beta>1.
\end{cases}
\end{align*}
 \end{lemma}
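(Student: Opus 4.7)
The plan is to mirror the proof of Lemma \ref{lm_p=6}, using the ellipse form of Lemma \ref{lm_6_integral_ellipsoid} (with $g(x)=|x|$, as restated at the start of this section) and replacing Lemma \ref{integral} by the sharper Lemma \ref{LM_Integrale}. The reason the threshold for $\alpha=\mathrm{Re}(z)$ can be lowered from $8/5$ to $3/2$ is that, whenever $\alpha\geq 3/2$, cases (1) and (3) of Lemma \ref{LM_Integrale} furnish the decay $|k|^{-\alpha}$ (up to logarithms), which is strictly sharper than the generic bound $|k|^{2-2\alpha-\zeta}$ produced by Lemma \ref{integral}. It suffices to treat $\alpha=z_6$; larger $\alpha$ is handled by the same proof with room to spare.

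I first apply the ellipse form of Lemma \ref{lm_6_integral_ellipsoid} with $N=3$ and split the outer $k$-integral into $\{|k|\leq 2\}$ and $\{|k|\geq 2\}$. On the compact part, the standard triple-convolution argument of Lemma \ref{lm_p=6} gives an $O(1)$ bound, using $\alpha\geq z_6>4/3$. On $\{|k|\geq 2\}$, I treat the inner double integral over $(n_1,n_2)$ in two substeps. First, fix $n_1$ and apply Lemma \ref{LM_Integrale} to the $n_2$-integral with $k$ replaced by $k-n_1$ and $Y$ replaced by $|m_1|+|m_2|+|k-m_1-m_2|-|n_1|$, restricted to $\{|k-n_1|\geq 2\}$ (the residual region $\{|k-n_1|<2\}$ being handled directly by Lemma \ref{LM_Integrale}(5) and contributing only a bounded term). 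Case (2) applies when $0\leq\beta\leq 2/5$ (so $\alpha\geq 8/5$) and fully absorbs the cutoff, while case (1) applies when $\beta>2/5$ (so $\alpha\in[3/2,8/5)$) and leaves a residual factor $(1+|Y-|n_1|-|k-n_1||)^{-\beta'}$ with $\beta'=\alpha+\min\{1,\beta\}-2>0$. Second, integrate over $n_1$: in the first scenario this is a Riesz convolution $\int|n_1|^{-\alpha}|k-n_1|^{2-2\alpha-\beta}dn_1\leq C|k|^{4-3\alpha-\beta}$; in the second scenario the residual integrand is again of the form of Lemma \ref{LM_Integrale}, now with $\alpha'=\alpha$ and the above $\beta'$, and a direct check shows that one lands in case (2) for $2/5<\beta<1$ and in case (3) (with $\alpha=3/2$, $\beta=1/2$) for $\beta\geq 1$. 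For the $(m_1,m_2)$-integration, the cutoff has by then been fully absorbed, leaving only the clean triple convolution $\int\int|m_1|^{-\alpha}|m_2|^{-\alpha}|k-m_1-m_2|^{-\alpha}dm_1dm_2=C|k|^{4-3\alpha}$.

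Combining, the integrand in $k$ over $\{|k|\geq 2\}$ takes the form $(1+\delta|k|)^{-\lambda}|k|^{-2}\log^{\eta_0}(|k|)$, since the identity $4-3\alpha-\alpha_0=-2$ is forced by $\alpha=z_6$; polar coordinates then yield $C\log^{\eta_0+1}(1/\delta)$. The main delicate point is the count of $\eta_0$, namely of which Kronecker indicators in Lemmas \ref{lm_crucial} and \ref{LM_Integrale} fire. A direct inspection gives $\eta_0=0$ for $\beta\in[0,2/5)\cup(2/5,1)$; $\eta_0=1$ for $\beta=2/5$ (from $\delta_{2-\alpha,\beta}$ in Lemma \ref{LM_Integrale}(2)); $\eta_0=3$ for $\beta>1$ (one log from $\delta_{3/2,\alpha}$ in case (1) of the first substep, plus the double logarithm from case (3) in the second substep); and $\eta_0=4$ for $\beta=1$ (additionally $\delta_{1,\beta}$ fires in the first substep). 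Substituting reproduces the asserted logarithmic exponents $1,2,1,5,4$. The $\beta=1$ endpoint, at which four separate Kronecker indicators coincide and must be tracked consistently through two successive applications of Lemma \ref{LM_Integrale}, is the hardest part of the bookkeeping.
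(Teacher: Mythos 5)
Your outline follows the paper's proof closely: apply Lemma \ref{lm_6_integral_ellipsoid} with $N=3$, split into $\{|k|\leq2\}$ and $\{|k|\geq2\}$, and on the latter apply Lemma \ref{LM_Integrale} twice (first to the $n_2$-integral with $k$ replaced by $k-n_1$, then to the $n_1$-integral), selecting case (2), (1)\,$\to$\,(2), or (1)\,$\to$\,(3) of that lemma according to whether $\beta\leq2/5$, $2/5<\beta<1$, or $\beta\geq1$. The case distinction and the final logarithmic exponents $1,2,1,5,4$ all match the paper.

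There is, however, one point where your mechanism diverges from the paper and is not actually justified as written. When $\beta\geq1$ (so $\alpha=z_6=3/2$), Lemma \ref{LM_Integrale}(1) applied to the $n_2$-integral produces the factor $\log^{\delta_{3/2,\alpha}}(|k-n_1|)$ (plus $\log^{\delta_{1,\beta}}(2+|Y''|)$ when $\beta=1$). You fold these into a single $\log^{\eta_0}(|k|)$ in the $k$-integrand, but $|k-n_1|$ is unbounded over the $n_1$-integration, so $\log(2+|k-n_1|)$ is \emph{not} dominated by $\log(|k|)$. Carrying it as $\log(|k|)$ through the second application requires a logarithmically strengthened version of Lemma \ref{LM_Integrale}(3) (and of Lemma \ref{lm_crucial}(4)), which is never stated in the paper. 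The paper instead keeps the cutoffs $(1+\delta|n_1|)^{-\lambda}(1+\delta|k|)^{-\lambda}\dots$ that Lemma \ref{lm_6_integral_ellipsoid} supplies, absorbs $\log(2+|k-n_1|)$ and $\log^{\delta_{1,\beta}}(2+|Y''|)$ into a prefactor $\log^{1+\delta_{1,\beta}}(1/\delta)$ \emph{before} doing the $n_1$-integral, and then applies Lemma \ref{LM_Integrale}(3) verbatim to get $|k|^{-\alpha}\log^{2}(|k|)$. Both routes give the same final exponent, but your step is a nontrivial extension that you neither prove nor acknowledge; in a full write-up you should either carry out the absorption via the $\delta$-cutoffs as the paper does, or prove the required logarithmic variants of Lemmas \ref{LM_Integrale}(3) and \ref{lm_crucial}(4). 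A smaller imprecision: the residual region $\{|k-n_1|<2\}$ handled by Lemma \ref{LM_Integrale}(5) contributes $C|k|^{-\alpha}$, whose $k$-integral still gives a $\log(1/\delta)$ when $\alpha=3/2$, not a constant; it is simply dominated by the main term.
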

 
 \proof
Call $\alpha=\mathrm{Re}\left(  z\right)$. By Lemma \ref{lm_6_integral_ellipsoid} with $N=3$, it suffices to estimate
 \begin{align*}
 &  \int_{\mathbb{R}^{2}} \left(  1+\delta| k| \right)^{-3\lambda} \\
 &\times\iint\limits_{\substack{|m_1| ,|m_2|,\\|k-m_1-m_2| >1}}\left(  1+\delta|m_1| \right)^{-\lambda}\left(  1+\delta| m_2| \right)^{-\lambda}|m_1|^{-\alpha}|m_2|^{-\alpha}|k-m_1-m_2|^{-\alpha}\\
 & \times \iint\limits_{\substack{|n_1| ,|n_2|,\\|k-n_1-n_2| >1}}(1+\delta|n_1|)^{-2\lambda}|n_1|^{-\alpha}|n_2|^{-\alpha}|k-n_1-n_2|^{-\alpha}\\
 &  \times\left(  1+| |m_1| +|m_2|+|k-m_1-m_2| -|n_1|-|n_2|-|k-n_1-n_2| |\right)  ^{-\beta}\\
 & \times dn_2 dn_1 dm_2 dm_1 dk.
 \end{align*}
Split $\mathbb{R}^{2}$ as $\left\{|k| \leq2\right\}  \cup\left\{  | k|
 \geq2\right\}  $. The integral over the disc $\left\{ | k| \leq2\right\}  $ is bounded by
 \begin{align*}
 &  \int_{  | k| \leq2  }\left(\int_{|m_1|>1}| m_1| ^{-\alpha}\int_{\mathbb{R}^{2}}| m_2| ^{-\alpha}| k-m_1-m_2| ^{-\alpha}dm_2dm_1\right)  ^{2}dk\\
 &  =C\int_{  | k| \leq2 }\left(\int_{|m_1|>1}| m_1| ^{-\alpha}| k-m_1| ^{2-2\alpha}dm_1\right)  ^{2}dk\\
 &  =C\int_{ | k| \leq1/2  }\left(\int_{|m_1|>1}| m_1| ^{2-3\alpha}dm_1\right)  ^{2}dk
 +C\int_{ 1/2\leq | k| \leq2  }|k| ^{8-6\alpha}dk\leq C,
 \end{align*}
 since $\alpha\ge z_6>4/3$.
 
 Consider now the case $\left\{  | k| \geq2\right\}  $. Assume $\alpha>2-\beta$ and $\alpha>3/2$. When $|k-n_1|\geq 2$, apply Lemma \ref{LM_Integrale} (1) 
 to the integral with respect to $n_2$ with $k$ replaced with $k-n_1$ and
 $Y$ replaced with $|m_1|+|m_2|+|k-m_1-m_2|-|n_1|$, and when $|k-n_1|\leq2$ apply Lemma \ref{LM_Integrale} (5) to the same integral,
  \begin{align*}
 & \underset{|n_1| ,|n_2|,| k-n_1-n_2| >1}{\int\int}|n_1|^{-\alpha}|n_2|^{-\alpha}|k-n_1-n_2|^{-\alpha}\\
&\times \left(  1+| |m_1| +|m_2|+|k-m_1-m_2| -|n_1|-|n_2|-|k-n_1-n_2| |\right)  ^{-\beta}
  dn_2 dn_1\\
 &  \leq C\int_{\mathbb R^2}|n_1|^{-\alpha}|k-n_1|^{-\alpha}\\
& \times\left(  1+ ||m_1| +|m_2|+|k-m_1-m_2| -|n_1|-|k-n_1| |\right)  ^{2-\alpha-\min\{1,\beta\}}\\
&\times \log^{\delta_{\beta,1}}(2+ ||m_1| +|m_2|+|k-m_1-m_2| -|n_1|-|k-n_1| |)
  dn_1\\
  &+C\int_{|k-n_1|<2}|n_1|^{-\alpha}dn_1.  
 \end{align*}
 The last integral is bounded by a constant times $|k|^{-\alpha}$.
 If $\beta=1$, the logarithm can be removed as long as one replaces $\alpha$ with a slightly smaller value greater than $3/2$. Thus we only need to estimate
 \begin{align*}
 &  \int_{\mathbb R^2}|n_1|^{-\alpha}|k-n_1|^{-\alpha}\\
& \times\left(  1+ ||m_1| +|m_2|+|k-m_1-m_2| -|n_1|-|k-n_1| |\right)  ^{2-\alpha-\min\{1,\beta\}}
  dn_1\\
   &  \leq C|k|^{-\alpha}|k|^{\max\{0,4-2\alpha-\min\{1,\beta\}\}}\log^{\delta_{\alpha,2-\beta/2}}(|k|),
 \end{align*}
 where we have applied Lemma \ref{LM_Integrale} (1) and (2).
 Moreover,
 \begin{align*}
 &\int_{\mathbb{R}^{2}}| m_1| ^{-\alpha}\int_{\mathbb{R}^{2}}| m_2| ^{-\alpha}| k-m_1-m_2| ^{-\alpha}dm_2dm_1\\
 &=C\int_{\mathbb{R}^{2}}| m_1| ^{-\alpha}|k-m_1| ^{2-2\alpha}dm_1=C| k| ^{4-3\alpha}.
 \end{align*}
 Finally, the integral over $\left\{  | k| \geq2\right\}  $ gives
 \begin{align*}
 &\int_{\substack{  | k| \geq2}  }(1+\delta|k|)^{-3\lambda}|k| ^{4-4\alpha+\max\{0,4-2\alpha-\min\{1,\beta\}\}}\log^{\delta_{\alpha,2-\beta/2}}(|k|)dk\\
 &\leq 
\begin{cases}
C & \text{ if } \alpha>(10-\beta)/6,\\
C\log(1/\delta) & \text{ if } \alpha=(10-\beta)/6 \text{ and } 2/5<\beta<1.
\end{cases}
\end{align*}

Assume now $3/2<\alpha\leq2-\beta$ so that $0\leq\beta<1/2$. Apply Lemma \ref{LM_Integrale} (2) and (5) to the integral with respect to $n_2$ with $k$ replaced with $k-n_1$,
  \begin{align*}
 & \underset{|n_1| ,|n_2|,| k-n_1-n_2| >1}{\int\int}|n_1|^{-\alpha}|n_2|^{-\alpha}|k-n_1-n_2|^{-\alpha}\\
&\times \left(  1+| |m_1| +|m_2|+|k-m_1-m_2| -|n_1|-|n_2|-|k-n_1-n_2| |\right)  ^{-\beta}
  dn_2 dn_1\\
 &  \leq C\underset{\mathbb R^2}{\int}|n_1|^{-\alpha}|k-n_1|^{2-2\alpha-\beta} \log^{\delta_{2-\alpha,\beta}}(2+|k-n_1|)\,dn_1\\
 & \leq C |k|^{4-3\alpha-\beta}\log^{\delta_{2-\alpha,\beta}}(|k|).
 \end{align*}
 Moreover,
 \begin{align*}
 &\int_{\mathbb{R}^{2}}| m_1| ^{-\alpha}\int_{\mathbb{R}^{2}}| m_2| ^{-\alpha}| k-m_1-m_2| ^{-\alpha}dm_2dm_1\\
 &=C\int_{\mathbb{R}^{2}}| m_1| ^{-\alpha}|k-m_1| ^{2-2\alpha}dm_1=C| k| ^{4-3\alpha}.
 \end{align*}
 Finally, the integral over $\left\{  | k| \geq2\right\}  $ gives
 \begin{align*}
 &\int_{\substack{  | k| \geq2}  }(1+\delta|k|)^{-3\lambda}|k| ^{8-6\alpha-\beta}\log^{\delta_{\alpha,2-\beta}}(|k|)dk\\
 &\leq 
\begin{cases}
C & \text{ if } \alpha>(10-\beta)/6,\\
C\log(1/\delta) & \text{ if } \alpha=(10-\beta)/6 \text{ and } 0\leq\beta<2/5,\\
C\log^2(1/\delta) & \text{ if } \alpha=(10-\beta)/6 \text{ and } \beta=2/5.
\end{cases}
\end{align*}
It remains to study the case $\alpha=3/2$ and $\beta\ge1$. 
Apply Lemma \ref{LM_Integrale} (1) and (5) to the integral with respect to $n_2$ with $k$ replaced with $k-n_1$,
  \begin{align*}
 &  \left(  1+\delta| k| \right)^{-3\lambda} 
 \left(  1+\delta|m_1| \right)^{-\lambda}\left(  1+\delta| m_2| \right)^{-\lambda}|m_1|^{-\alpha}|m_2|^{-\alpha}|k-m_1-m_2|^{-\alpha}\\
 & \times \underset{|n_1| ,|n_2|,| k-n_1-n_2| >1}{\int\int}(1+\delta|n_1|)^{-2\lambda}|n_1|^{-\alpha}|n_2|^{-\alpha}|k-n_1-n_2|^{-\alpha}\\
 &  \times\left(  1+| |m_1| +|m_2|+|k-m_1-m_2| -|n_1|-|n_2|-|k-n_1-n_2| |\right)  ^{-\beta} dn_2 dn_1 \\
 &  \leq C(1+\delta|k|)^{-3\lambda}
 \left(  1+\delta|m_1| \right)^{-\lambda}\left(  1+\delta| m_2| \right)^{-\lambda}|m_1|^{-\alpha}|m_2|^{-\alpha}|k-m_1-m_2|^{-\alpha}\\
 &\times\int_{\mathbb R^2}(1+\delta|n_1|)^{-2\lambda}|n_1|^{-\alpha}|k-n_1|^{-\alpha}\log(2+|k-n_1|)\\
& \times\left(  1+ ||m_1| +|m_2|+|k-m_1-m_2| -|n_1|-|k-n_1| |\right)  ^{-1/2}\\
&\times \log^{\delta_{\beta,1}}(2+ ||m_1| +|m_2|+|k-m_1-m_2| -|n_1|-|k-n_1| |)
  dn_1 \\
  &+ C(1+\delta|k|)^{-3\lambda}
|m_1|^{-\alpha}|m_2|^{-\alpha}|k-m_1-m_2|^{-\alpha}
\int_{|n_1-k|<2}|n_1|^{-\alpha}
  dn_1
 \end{align*}
 Since there is a $C>0$ such that for $0<\delta<1/2$
 \begin{align*}
 &(1+\delta|k|)^{-\lambda}(1+\delta|m_1|)^{-\lambda}(1+\delta|m_2|)^{-\lambda} (1+\delta|n_1|)^{-\lambda}\\&\times\log(2+|k|+|m_1|+|m_2|+|n_1|)\\
 &\leq C\log(1/\delta),
  \end{align*}
 we obtain 
  \begin{align*}
&C\log^{1+\delta_{1,\beta}}(1/\delta) (1+\delta|k|)^{-\lambda}
|m_1|^{-\alpha}|m_2|^{-\alpha}|k-m_1-m_2|^{-\alpha}\\
 &\times\int_{\mathbb R^2}|n_1|^{-\alpha}|k-n_1|^{-\alpha}\\
 &\times\left(  1+ ||m_1| +|m_2|+|k-m_1-m_2| -|n_1|-|k-n_1| |\right)  ^{-1/2}
  dn_1 \\
  &+ C(1+\delta|k|)^{-3\lambda}
|m_1|^{-\alpha}|m_2|^{-\alpha}|k-m_1-m_2|^{-\alpha}\int_{|n_1-k|<2}|n_1|^{-\alpha}
  dn_1.
  \end{align*}
By Lemma \ref{LM_Integrale}(3), the above integrals are bounded by
 \begin{align*}
&C\log^{1+\delta_{1,\beta}}(1/\delta) (1+\delta|k|)^{-\lambda}|k|^{-\alpha}\log^2{|k|}
 |m_1|^{-\alpha}|m_2|^{-\alpha}|k-m_1-m_2|^{-\alpha}.
  \end{align*}

 Moreover,
 \begin{align*}
 &\int_{\mathbb{R}^{2}}| m_1| ^{-\alpha}\int_{\mathbb{R}^{2}}| m_2| ^{-\alpha}| k-m_1-m_2| ^{-\alpha}dm_2dm_1\\
 &=C\int_{\mathbb{R}^{2}}| m_1| ^{-\alpha}|k-m_1| ^{2-2\alpha}dm_1=C| k| ^{4-3\alpha}.
 \end{align*}
 Finally, the integral over $\left\{  | k| \geq2\right\}  $ gives
 \begin{align*}
 \log^{1+\delta_{1,\beta}}(1/\delta)\int_{\substack{  | k| \geq2}  }(1+\delta|k|)^{-\lambda}|k| ^{4-4\alpha}\log^2{|k|}dk
 \leq 
C\log^{4+\delta_{1,\beta}}(1/\delta).
\end{align*}
  \endproof

 \begin{lemma} \label{lm_Interpolation_sphere_d=2} 
The notation is as in the previous lemmas and let $d=2$. 

\noindent If $0\leq\beta<2/5$ then there exist a constant $C$ such that for every  $\mathrm{Re}\left(  z\right)  \geq 3/2$,  for every $R\ge 1$ and $0<\delta<1/2$,
 \begin{align*}
   \left\{  \int_{\mathbb{R}}  \int_{\mathbb{T}^{2}}| \Phi\left(  \delta,z,r,x\right)  | ^{p}dxd\mu(r-R)\right\}^{1/p}
   \leq
 \begin{cases}
 C & \text{if }p<4+2\beta,\\
  C\log^{1/p}\left(  1/\delta\right)  & \text{if }p=4+2\beta.
 \end{cases}
 \end{align*}
 If $\beta=2/5$ then there exist a constant $C$ such that for every  $\mathrm{Re}\left(  z\right)  \geq 3/2$,  for every $R\ge 1$ and $0<\delta<1/2$,
 \begin{align*}
   \left\{  \int_{\mathbb{R}}  \int_{\mathbb{T}^{2}}| \Phi\left(  \delta,z,r,x\right)  | ^{p}dxd\mu(r-R)\right\}^{1/p}
   \leq
 \begin{cases}
 C & \text{if }p<4+2\beta,\\
  C\log^{1/p+1/12}\left(  1/\delta\right)  & \text{if }p=4+2\beta.
 \end{cases}
 \end{align*}
 If $2/5<\beta<1$ then there exist a constant $C$ such that for every  $\mathrm{Re}\left(  z\right)  \geq 3/2$,  for every $R\ge 1$ and $0<\delta<1/2$,
 \begin{align*}
  \left\{  \int_{\mathbb{R}}  \int_{\mathbb{T}^{2}}| \Phi\left(  \delta,z,r,x\right)  | ^{p}dxd\mu(r-R)\right\}^{1/p}
  \leq
 \begin{cases}
 C & \text{if }p<4+2\beta,\\
  C\log^{1/p}\left(  1/\delta\right)  & \text{if }p=4+2\beta.
 \end{cases}
 \end{align*}
  If $\beta=1$ then there exist a constant $C$ such that for every  $\mathrm{Re}\left(  z\right)  \geq 3/2$,  for every $R\ge 1$ and $0<\delta<1/2$,
 \begin{align*}
  \left\{  \int_{\mathbb{R}}  \int_{\mathbb{T}^{2}}| \Phi\left(  \delta,z,r,x\right)  | ^{p}dxd\mu(r-R)\right\}^{1/p}
  \leq
 \begin{cases}
 C & \text{if }p<6,\\
  C\log^{1/p+2/3}\left(  1/\delta\right)  & \text{if }p=6.
 \end{cases}
 \end{align*}
  If $\beta>1$ then there exist a constant $C$ such that for every  $\mathrm{Re}\left(  z\right)  \geq 3/2$,  for every $R\ge 1$ and $0<\delta<1/2$,
 \begin{align*}
  \left\{  \int_{\mathbb{R}}  \int_{\mathbb{T}^{2}}| \Phi\left(  \delta,z,r,x\right)  | ^{p}dxd\mu(r-R)\right\}^{1/p}
  \leq
 \begin{cases}
 C & \text{if }p<6,\\
  C\log^{1/p+1/2}\left(  1/\delta\right)  & \text{if }p=6.
 \end{cases}
 \end{align*}

 \end{lemma}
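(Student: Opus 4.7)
The plan is to mimic the proof of Lemma \ref{lm_Interpolation_d=2} line by line, but feed it the sharper endpoint inputs $z_4=\max\{(6-\beta)/4,5/4\}$ and $z_6=\max\{(10-\beta)/6,3/2\}$ supplied by Lemmas \ref{lm_p=4_disc} and \ref{lm_p=6_disc}. Since the integral majorant in Lemma \ref{lm_6_integral_ellipsoid} is monotone decreasing in $\mathrm{Re}(z)$, it suffices to establish every bound at $z=3/2$; the case $\mathrm{Re}(z)\ge 3/2$ then follows. The case $\beta=0$ is already contained in Lemma \ref{lm_p=4_disc}, so one may assume $\beta>0$.

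The heart of the argument is Stein's complex interpolation applied to the analytic family $z\mapsto\Phi(\delta,z,r,x)$ (viewed as a bounded analytic function on vertical strips, valued in $L^a+L^b$) on the strip $\{A\leq\mathrm{Re}(z)\leq B\}$ with $A=z_4+\varepsilon$, $B=z_6+\varepsilon$, $\varepsilon\geq 0$, and the pair $(a,b)=(4,6)$. Writing $3/2=(1-\vartheta)A+\vartheta B$ gives $\vartheta=(3/2-A)/(B-A)$ and the interpolated exponent $1/p=(1-\vartheta)/4+\vartheta/6$. With $H$ and $K$ the endpoint bounds from Lemmas \ref{lm_p=4_disc} and \ref{lm_p=6_disc} respectively, one obtains
\[
\Bigl\{\int_{\mathbb R}\int_{\mathbb T^2}|\Phi(\delta,3/2,r,x)|^p\,dx\,d\mu(r-R)\Bigr\}^{1/p}\leq H^{1-\vartheta}K^{\vartheta}.
\]
Taking $\varepsilon>0$ makes $p$ strictly smaller than the threshold value ($4+2\beta$ when $\beta\leq 1$, $6$ when $\beta\geq 1$) and produces a uniform bound, since both $H$ and $K$ are bounded. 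Taking $\varepsilon=0$ puts $p$ exactly at the threshold and produces a logarithmic loss $\log^{a_4(1-\vartheta)/4+a_6\vartheta/6}(1/\delta)$, where $a_4, a_6$ are the log exponents in Lemmas \ref{lm_p=4_disc} and \ref{lm_p=6_disc}. Using $(1-\vartheta)/4+\vartheta/6=1/p$, this rewrites as $1/p+(a_4-1)(1-\vartheta)/4+(a_6-1)\vartheta/6$.

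All that remains is a case-by-case computation of $\vartheta$ and of the correction $(a_4-1)(1-\vartheta)/4+(a_6-1)\vartheta/6$. For $\beta\in[0,1]$ one finds $z_6-z_4=(2+\beta)/12$ and $3/2-z_4=\beta/4$, whence $\vartheta=3\beta/(2+\beta)$ and $p=4+2\beta$; for $\beta\geq 1$ one finds $z_4=5/4$, $z_6=3/2$, $\vartheta=1$, $p=6$. Combining with the values $a_4\in\{1,2\}$ and $a_6\in\{1,2,1,5,4\}$ recorded in Lemmas \ref{lm_p=4_disc} and \ref{lm_p=6_disc} according as $\beta$ lies in $[0,2/5)$, $\{2/5\}$, $(2/5,1)$, $\{1\}$, $(1,\infty)$, the correction is: $0$ when $a_4=a_6=1$ (giving the $\log^{1/p}$ case); $\vartheta/6=1/12$ at $\beta=2/5$; $(1-\vartheta)/4+4\vartheta/6=2/3$ at $\beta=1$; and $3\vartheta/6=1/2$ for $\beta>1$.

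The main obstacle is not conceptual but purely algebraic: one must keep straight the five regimes of $\beta$ and verify by direct arithmetic that each of the five logarithmic exponents claimed in the statement matches $1/p+(a_4-1)(1-\vartheta)/4+(a_6-1)\vartheta/6$. Once the table of $(z_4,z_6,a_4,a_6,\vartheta)$ values is written down, the identification is mechanical, and the argument for Lemma \ref{lm_Interpolation_d=2} carries over verbatim.
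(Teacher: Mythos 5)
Your proposal is correct and follows essentially the same route as the paper: one interpolates the analytic family $\Phi(\delta,z,\cdot,\cdot)$ between $L^4$ and $L^6$ on the strip $z_4+\varepsilon\le\mathrm{Re}(z)\le z_6+\varepsilon$, using the ellipse-specific endpoint bounds of Lemmas~\ref{lm_p=4_disc} and \ref{lm_p=6_disc}, and tracks the logarithmic losses through $1/p+(a_4-1)(1-\vartheta)/4+(a_6-1)\vartheta/6$; your case-by-case arithmetic (in particular $\vartheta=3\beta/(2+\beta)$ giving $p=4+2\beta$ for $\beta\le1$, and the corrections $0,\ 1/12,\ 0,\ 2/3,\ 1/2$) reproduces exactly the exponents of the paper, which writes the same quantity with $\eta=a_4-1$ and $\omega=a_6-1$.
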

 
 \begin{proof}
 Again, it is enough to prove the result for $z=3/2$. The case $\beta=0$ is contained in Lemma \ref{lm_p=4_disc}. If $\beta>0$
 the proof follows by complex interpolation with $a=4$, $b=6$, $A=z_4+\varepsilon$, $B=z_6+\varepsilon$, with $\varepsilon\geq0$. The norms $H$ and $K$ are given in Lemma \ref{lm_p=4_disc} and Lemma \ref{lm_p=6_disc}. 
 Set
 $$\frac{3}{2}=\left(  1-\vartheta\right)  A+\vartheta B.$$
 This gives
 $$
 \vartheta=
 \begin{cases}
 \frac{3(\beta-4\varepsilon)}{2+\beta} \text { if } \beta<1,\\
 1-4\varepsilon \text { if } \beta\geq1,
 \end{cases}
 $$
 and
 $$\frac{1}{p}=\frac{\left(  1-\vartheta\right)  }{a}+\frac{\vartheta}{b}=
 \begin{cases}
 \frac{1+2\varepsilon}{2(2+\beta)} & \text{ if } \beta<1,\\
 \frac{1+2\varepsilon}{6} & \text{ if } \beta\geq1.
 \end{cases}
 $$

 When $\varepsilon>0$ and $p<\min\{6,4+2\beta\} $,
 $$\left\{  \int_{\mathbb{R}} \int_{\mathbb{T}^{2}}| \Phi\left(  \delta,3/2,x\right)  |^{p}dxd\mu(r-R)\right\}  ^{1/p}\leq C.$$
 When $\varepsilon=0$ and $p=\min\{6,4+2\beta\}  $,
 $$\left\{  \int_{\mathbb{R}}  \int_{\mathbb{T}^{d}}| \Phi\left(  \delta,3  /2,r,x\right)  |^{p}dxd\mu(r-R)\right\}  ^{1/p}\leq C\log^{1/p+\eta(1-\vartheta)/4+\omega\vartheta/6}\left(  1/\delta\right)  ,$$
 where $\omega=4$ if $\beta=1$, $\omega=3$ if $\beta>1$, $\omega=1$ if $\beta=2/5$ and $\omega=0$ in the other cases, while
 $\eta=1$ if $\beta=1$ and $\eta=0$ in the other cases.
 \end{proof}

The proof of Theorem \ref{thm_d=2_ellipse} can now be concluded as in the case of the general convex set with smooth boundary  with strictly positive curvature.

\bibliographystyle{amsplain}

\end{document}